\documentclass{article}
\usepackage[utf8]{inputenc}
\usepackage{amsmath} 
\usepackage{amssymb}
\usepackage{amsthm}
\usepackage{color}
\usepackage{hyperref}
\usepackage{cleveref}
\usepackage{comment}
\usepackage{graphicx}
\usepackage[a4paper]{geometry} 

\usepackage[style=alphabetic]{biblatex} 
\addbibresource{biblio_UST_coupling.bib} 

\crefname{theorem}{Theorem}{Theorems}
\crefname{thm}{Theorem}{Theorems}
\crefname{thm*}{Theorem*}{Theorems}
\crefname{lemma}{Lemma}{Lemmas}
\crefname{lem}{Lemma}{Lemmas}
\crefname{remark}{Remark}{Remarks}
\crefname{prop}{Proposition}{Propositions}
\crefname{notation}{Notation}{Notations}
\crefname{claim}{Claim}{Claims}
\crefname{defin}{Definition}{Definitions}
\crefname{corollary}{Corollary}{Corollaries}
\crefname{section}{Section}{Sections}
\crefname{figure}{Figure}{Figures}
\crefname{assumption}{Assumption}{Assumptions}

\newtheorem{lem}{Lemma}[section]
\newtheorem{thm}[lem]{Theorem}
\newtheorem{defin}[lem]{Definition}
\newtheorem{assumption}[lem]{Assumption}
\newtheorem{prop}[lem]{Proposition}
\newtheorem{corollary}[lem]{Corollary}

\theoremstyle{definition}
\newtheorem{remark}[lem]{Remark}
\newtheorem{construction}[lem]{Construction}

\renewcommand \P {\mathbb{P}}
\newcommand{\E}{\mathbb{E}}
\newcommand{\C}{\mathbb{C}}

\newcommand{\N}{\mathbb{N}}
\newcommand{\Z}{\mathbb{Z}}
\newcommand{\D}{\mathbb{D}}

\newcommand{\abs}[1]{\lvert #1 \rvert}
\newcommand{\norm}[1]{\lVert #1 \rVert}

\renewcommand{\d}{{\#\delta}}
\newcommand{\cT}{\mathcal{T}}
\newcommand{\ep}{\epsilon}
\newcommand{\Gd}{G^{\# \delta}}
\newcommand{\p}{\partial}

\DeclareMathOperator{\diam}{diam}
\DeclareMathOperator{\fLE}{\overrightarrow{LE}}
\DeclareMathOperator{\bLE}{\overleftarrow{LE}}
\DeclareMathOperator{\harm}{Harm}

\renewcommand{\r}{{\text{ref}}}

\title{Does a portion of dimer configuration determines its domain of definition?}
\author{Antoine Bannier \and Benoît Laslier}

\begin{document}

\maketitle

\abstract{Critical models are, almost by definition, supposed to feature both slow decay of correlations for local observables while retaining some mixing even for macroscopic observables. A strong version of the latter property is that changing boundary conditions cannot have a singular (in the measure theoretic sense) effect on the model away from the boundary, even asymptotically. In this paper we prove that statement for the wired uniform spanning tree and temperleyan dimer model.}

\tableofcontents

\section{Introduction}

The goal of this paper is to establish strong mixing estimates for both the two dimensional wired uniform spanning tree (UST from now on) and the dimer model. Focusing on the former for concreteness and because it will be the main language used in the paper, we wish to say that as soon as we look macroscopically away from the wired boundary, the law of an UST ``forgets'' any small detail of the boundary and only keeps a ``fuzzy memory'' of even the macroscopic location of that boundary. To give a precise sense to the above sentence, we consider the following statistical setup. Fix $U, D_1, D_2$ be three simply connected bounded open sets, with $\overline{U}\subset D_1 \cap D_2$ and let $U^\d, D_1^\d, D_2^\d$ be discretisations of these sets (ultimately we will want to be as general as possible here but, for now, it is enough to think that $U^\d = U \cap \delta \Z^2$ and similarly for the others). Our mixing statement then becomes a testing problem: if one is given the restriction to $U^\d$ of some UST, can we say whether it was sampled from $D_1^\d$ or $D_2^\d$ ? By ``fuzzy memory'', we mean that, even asymptotically as $\delta \to 0$, there is an upper bound on the quality any statistical test can hope to achieve. More precisely our main result is the following.
\begin{thm}\label{thm:main}
	Let $\nu_1, \nu_2$ denote the laws of the restrictions to $U$ of wired UST in $D_1^\d$ and $D_2^\d$ respectively. For all $\epsilon > 0$, there exits $C = C( U, D_1, D_2, \epsilon)> 0$ such that for all $\delta$ small enough
	\[
	\nu_1 ( \frac{1}{C} \leq \frac{d \nu_1}{d \nu_2} \leq C) \geq 1 - \epsilon, \qquad 	\nu_2 ( \frac{1}{C} \leq \frac{d \nu_1}{d \nu_2} \leq C) \geq 1 - \epsilon.
	\]
\end{thm}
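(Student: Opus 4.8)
The plan is to use Wilson's algorithm together with the spatial Markov property of the UST to separate, inside $U^\d$, the ``universal'' part of the tree from the part that carries the dependence on the domain, and then to estimate the density of the latter through an explicit Green's function / loop-measure formula, controlling the event on which that density degenerates.

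\emph{Step 1: reduction to the wiring of $\partial U^\d$ by the complement.} Generate the wired UST in $D_i^\d$ by Wilson's algorithm rooted at $\partial D_i^\d$, processing every vertex of $D_i^\d\setminus U^\d$ before any vertex of $U^\d$. The exterior exploration influences the continuation inside $U^\d$ only through a combinatorial datum $\Pi_i$ carried by $\partial U^\d$: the partition recording which inner boundary points are joined by exterior branches, together with the identification of those that reach $\partial D_i^\d$ without re-entering $U^\d$. Conditionally on $\Pi_i=\pi$, the restriction to $U^\d$ is produced by Wilson's algorithm \emph{inside} $U^\d$ with boundary wired according to $\pi$, a procedure not involving $i$; call the resulting law $\mu_\pi$. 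Since the restriction to $U^\d$ determines $\Pi_i$, we obtain $\nu_i=\int\mu_\pi\,d\bar{\P}_i(\pi)$ with the same kernel $\pi\mapsto\mu_\pi$ for $i=1,2$, where $\bar{\P}_i$ is the law of $\Pi_i$; hence
\[
\frac{d\nu_1}{d\nu_2}=\frac{d\bar{\P}_1}{d\bar{\P}_2}\circ\Pi, \qquad \nu_i(\Pi\in\cdot)=\bar{\P}_i .
\]
It therefore suffices to prove the theorem with $\bar{\P}_1,\bar{\P}_2$ in place of $\nu_1,\nu_2$, i.e. for the law of the wiring that the complement imposes on $\partial U^\d$.

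\emph{Step 2: an explicit density.} Fix an intermediate domain $W$ with $\overline U\subset W$ and $\overline W\subset D_1\cap D_2$, discretised so that $U^\d\subset\mathring W^\d$. Applying the Markov property again, $\Pi_i$ is obtained by applying a fixed domain-independent kernel to the restriction to $W^\d$ of the exterior tree, so it is enough to compare the laws $\lambda_1,\lambda_2$ of that restriction. By the transfer-current (Burton--Pemantle) theorem, $\lambda_i$ has an explicit density, relative to the corresponding free-boundary object on the annulus $W^\d\setminus\mathring U^\d$, equal to a determinant built from the transfer currents of the wired UST in $D_i^\d$ evaluated on edges inside $W^\d$, equivalently (via the loop-erased walk / loop-soup representation) to the exponential of a loop measure of loops in $D_i^\d$ that meet both the configuration and $D_i^\d\setminus W^\d$. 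In either guise the density depends on $D_i$ only through the Green's function $G_{D_i^\d}(x,y)$ for $x,y\in W^\d$, and $\frac{d\lambda_1}{d\lambda_2}$ is the ratio of the two densities.

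\emph{Step 3: the comparison estimate (the crux).} Because $\overline W$ lies at positive distance from $\partial D_1$ and from $\partial D_2$, the difference $G_{D_1^\d}-G_{D_2^\d}$ is discrete harmonic in each variable on $W^\d$ (its logarithmic singularity being the same for both), is bounded, and converges as $\d\to0$ to a function uniformly smooth on $\overline U$; consequently the double differences entering the determinant of Step 2 differ between $i=1$ and $i=2$ by quantities that are both small and spatially spread out, which lets one bound the resulting trace-class perturbation and hence the ratio of determinants by a constant $C=C(U,W,D_1,D_2,\epsilon)$ --- \emph{provided} the configuration stays at a macroscopic distance from $\partial U^\d$. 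The remaining point is that the bad event, on which some branch of the tree approaches $\partial U^\d$ at a scale finer than every fixed macroscopic scale (where the wiring becomes sensitive to microscopic features of $D_i$ and the density can blow up), has $\nu_i$-probability at most $\epsilon$; this is supplied by Beurling-type and Russo--Seymour--Welsh-type estimates for loop-erased walk and the UST, uniform in $\d$. Assembling Steps 1--3 yields the theorem, and the dimer statement follows along the same lines via Temperley's bijection.

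I expect Step 3 to be the principal obstacle: the soft facts that the two models can be coupled, or are close in total variation, do not give a bound on the Radon--Nikodym derivative, and one genuinely needs the explicit density of Step 2 together with a quantitative, $\d$-uniform estimate on the exceptional event where that density degenerates.
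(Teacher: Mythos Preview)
Your Step 1 contains a genuine error. The claim ``the restriction to $U^\d$ determines $\Pi_i$'' is false: the forest $\mathcal{T}\cap U^\d$ does not in general determine how its components are connected through $D_i^\d\setminus U^\d$. Two distinct wirings $\pi\neq\pi'$ can produce the same interior configuration with positive probability, so the conditional laws $\mu_\pi$ do not have disjoint supports and the identity $\frac{d\nu_1}{d\nu_2}=\frac{d\bar{\P}_1}{d\bar{\P}_2}\circ\Pi$ fails. What survives is only the mixture representation $\nu_i=\int\mu_\pi\,d\bar{\P}_i(\pi)$; from a high-probability two-sided bound on $d\bar{\P}_1/d\bar{\P}_2$ one can still recover the theorem, but that requires a separate (easy) argument and is not what you wrote. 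There is also a subtlety upstream: running Wilson's algorithm from the exterior vertices first does not confine the walks to $D_i^\d\setminus U^\d$, so the ``exterior exploration'' already lays down branches inside $U^\d$, and the information it passes to the continuation is richer than a partition of $\partial U^\d$.

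Steps 2--3 are too sketchy to constitute a proof even granting Step 1. The determinants you invoke have size of order $\delta^{-2}$, and the fact that $G_{D_1^\d}-G_{D_2^\d}$ is bounded and harmonic on $W^\d$ does not by itself yield a $\delta$-uniform bound on the ratio of such determinants; the ``trace-class perturbation'' step is precisely the crux and you have only asserted it. The paper avoids explicit density formulas altogether and instead builds couplings directly through Wilson's algorithm. For the easier direction (assuming $D_1\subset D_2$, bound under $\nu_1$) it conditions the first branch of the tree in $D_2$ to trace out $\partial D_1$ up to distance $r$ --- an event of uniformly positive probability --- after which the \emph{same} random walks are fed into Wilson's algorithm in both domains, and a Beurling estimate plus Schramm's finiteness theorem force agreement on $U$ with high probability. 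For the harder direction it introduces a mixed forward/backward loop-erasure, conditions the associated loop soup to contain only loops of diameter at most $r$ (again uniformly positive probability) so that no macroscopic erasure can occur, and then pushes each excursion of the walk outside a neighbourhood of $U$ from $D_2$ to $D_1$ through a fixed diffeomorphism that is the identity near $U$. The bound on the Radon--Nikodym derivative of the constructed walk against the true random walk in $D_1$ comes from conditional uniform-crossing and Beurling estimates, and this is where the real analytic work sits; Schramm's finiteness theorem again reduces the full restriction to finitely many branches. Your intuition that a loop-measure comparison should be relevant is not wrong --- the conditioning on small loops is exactly a loop-soup statement --- but the paper exploits it at the level of a single walk given its erasure, not as a global determinantal formula.
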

Essentially one can think of that result as stating that $\nu_1$ and $\nu_2$ are mutually absolutely continuous in a way which is independent of $\delta$.

In fact \cref{thm:main} and the techniques used to prove it will provide a number of extensions which we hope can form a base set of mixing statements to be used as a toolkit in later work. We defer to \cref{sec:extensions} for any details but briefly \cref{continuity_bounded} contains the aforementioned fact that microscopic details of the boundary are forgotten while \cref{prop:U_multiple} says that the restrictions to several disjoints sets $U$ do not influence each other too much, even if they are nested. The techniques also allow us to study the dimer model with Temperleyan boundary conditions, with essentially the same results as in the spanning tree case.


\smallskip

Before discussing in more details the motivations leading to \cref{thm:main}, let us take a step back and recall briefly some of the history on the UST and dimer model. To the best of our knowledge, the study of UST goes back to the well known matrix-tree theorem of Kirchhoff establishing that on any finite graph, number of spanning trees is given by any cofactor of the Laplacian matrix. This shows that spanning trees are very nice combinatorial objects, non-trivial but still quite amenable to analysis and with a remarkably simple formula appearing in the end. Similarly, the earliest result on the dimer model \cite{MacMahon1915} is a formula for the number a ways to tile an hexagon which is so simple and mysterious the we don't resist the temptation to write it :
\[
\sharp \{\text{lozenge tiling of hexagon with sides $a,b,c$}\} = \prod_{i=1}^a \prod_{j = 1}^b \prod_{k=1}^c \frac{i+j+k-1}{i+j+k-2}.
\]
Strikingly after respectively $160$ and $110$ years of study, there are still both a great number of natural unsolved questions on both models and many different and powerful methods to study them (in part evolved out of the counting arguments above, in part completely different). 

Due to the age and prominence of these models, we will not give a full account of their history but let us still mention a few results on the aspects of the models that will be used later. The bijection between the two models was first described by Temperley \cite{Temperley1974} in the square lattice and then extended to a more general setting in \cite{Kenyon1} and then \cite{Kenyon2007b}. The link between UST and loop-erased random walk (LERW from now on), which serves as the backbone of all the analysis in this paper, was discovered in \cite{Pemantle1991} and refined into an sampling algorithm in the landmark paper of Wilson \cite{Wil2}. We will build extensively on \cite{Schramm2000} which used Wilson's algorithm to derive many qualitative properties of planar UST. Of course in this setting we have to mention the convergence of UST to SLE$_2$/SLE$_8$ from \cite{Lawler2004} (and \cite{Yadin2011} for general planar graphs) which is one of the main motivations for our statement.
Let us also note that in three dimension, LERW is actually one of very few statistical model where a convergence is proved \cite{Kozma2007}. On the dimer side, if we don't assume anything about the boundary the two main known results are a law of large number for the height function \cite{Cohn2001} and a recent local limit \cite{Aggarwal2019}. In this paper however we will only consider so called Temperleyan boundary conditions for which, in a sense, the limit in the law of large number is just the function $0$\footnote{On the other hand, because of the arbitrary choices involved in the defintiion of the height function this is not such a strong condition. In fact for lozenge tiling, \cite{Laslier2021} shows that Temperleyan boundary conditions are in a sense dense in the set of boundary conditions leading to smooth limit shapes.}. In such domain, as mentioned above Kenyon \cite{Kenyon2000,Kdom2} obtained the convergence of fluctuations of the height function in $\Z^2$. This was extended to more general graphs in \cite{BLR16} by exploiting more precisely the link between dimers and spanning trees.

\smallskip

Let us now come back to \cref{thm:main} and discuss why we would like to prove such a statement. First let us point out that for the SLE, the effect of changing the domain of definition on the law of the curve was studied very early (see for example \cite{Lawler2003}) and was a key tool to identify the law of the boundary of Brownian motion. Beyond the special property at $\kappa = \frac{8}{3}$ and $\kappa = 6$, all SLE curves satisfy at least locally \cref{thm:main} and the Radon-Nycodym derivative is in fact explicit. For the Gaussian free field which describe the limit of of the dimer height function, since it is (as its name indicated) a Gaussian process- it is not hard to check with a generalised Girsanov theorem that \cref{thm:main} also holds, again with an explicit Radon-Nycodym derivative. Together with the scaling limit results from the previous paragraph, this proves that a version of \cref{thm:main} holds ``for macroscopic quantities'' but it natural to ask if microscopic details can still carry some information. In fact, we believe that specifically for dimers and UST, keeping track of microscopic information is important because they can be related to several different models (abelian sandpile, double dimer, xor-Ising, $O(1)$ loops on hexagonal lattice) using bijections (or measure preserving maps) that rely heavily on these details. In particular for the double dimer, we plan in a future work to establish Russo-Seymour-Welsh estimates using the results from this paper.

Finally, since for SLE the result is true for all $\kappa$, we believe that analogue of \cref{thm:main} should be true for almost all critical models, and similarly for the extensions of \cref{sec:extensions}. To the best of our knowledge however, even for the critical Ising model the continuity with respect to boundary condition in \cref{continuity_bounded} is not proved and this is a significant limit in our understanding because some of our best techniques are analytic and require smooth boundary conditions.

\medskip

The rest of the paper is organised as follow. \cref{sec:assumptions} completes the fully rigorous statement of \cref{thm:main} by stating the required assumptions on the sequence of graphs $G^\d$ and provides a few properties of the random walk under these assumptions. The rest of \cref{sec:assumptionbackground} contains some background on the objects that will be used later in the paper. The key known results appearing in the proofs are restated for the sake of completeness and ease of reference. More precisely we discuss in that order the loop-soup measure, UST with Wilson's algorithm and the finiteness theorem, some theory of conformal mappings with rough boundary conditions and the link between dimers and spanning tree. \cref{sec:upper_bound,sec:lower_bound} contain the proof of the main theorem. Assuming without loss of generality that $D_1 \subset D_2$ in \cref{thm:main}, \cref{sec:upper_bound} contains the proof of the first part of the statement in \cref{thm:main}  (where in fact only the upper bound is non-trivial), while \cref{sec:lower_bound} proves the second statement. Finally \cref{sec:extensions} treats variants of \cref{thm:main} such as the analogous result for the dimer model or the case where $U$ is an annulus.
Let us emphasize that the reader should be able to skip all or parts of \cref{sec:assumptionbackground} if she is already familiar with the respective topic. The precise assumptions in \cref{sec:assumptions} can be ignored if one always consider $G^\d = \delta \Z^2$ which does \emph{not} make the later arguments significantly easier. Also \cref{sec:domains,sec:winding} are only necessary for the extension to the dimer model.

\paragraph{Acknowledgments :} AB and BL were supported by ANR DIMERs, grant number ANR-18-CE40-0033. We benefited from many discussions with colleagues including Misha Basok, Nathanael Berestycki, Cédric Boutillier, Théo Leblanc and Gourab Ray.

\section{Assumptions and background}\label{sec:assumptionbackground}

In this section, we state more formally our assumptions with respect to the underlying graphs we will be studying (\cref{sec:assumptions}), as well as provide a collection of previous results and definitions needed later in order to make the paper more self contained. A reader familiar with the respective material can certainly skip the sections from \ref{sec:loop_soup} to \ref{sec:winding} and in fact no important idea from this paper would be lost if the reader skips \ref{sec:assumptions} and always take $D^\d$ to be nice subgraphs of $\delta \Z^2$. We finally note that \cref{sec:domains,sec:winding} are only used for the extension of \cref{sec:extensions}.

\subsection{Graphs and random walk}\label{sec:assumptions}

We consider a collection of graphs $(G^{\# \delta})_{\delta \geq 0}$ which are oriented, weighted planar graphs (possibly with self loops and multiples edges). The reader is advised to think of $\delta$ as a scale parameter but note that we are not restricting ourself to scaled version of a fixed graph. Interpreting the weights as transition rates, we obtain a natural continuous time random walk on $G^{\sharp \delta}$ and we denote the law of the walk started from a vertex $v$ by $\P_v$. For an oriented edge $e$, we will denote its weight by $w(e)$ and we denote the set of oriented edges of $G^\d$ by $E_{\delta}$. For ease of notations, we will allow ourself to write an oriented edge from $x$ to $y$ as $x \to y$ as if there was no multiple edges. When using this notation, we will adopt the convention that $w( x \to y) = 0$ if there is no edge from $x$ to $y$. Additionally we make the following assumptions on the collection $G^{\# \delta}$.

\begin{figure}[ht]
	\begin{center}
		\includegraphics[width = .5\textwidth]{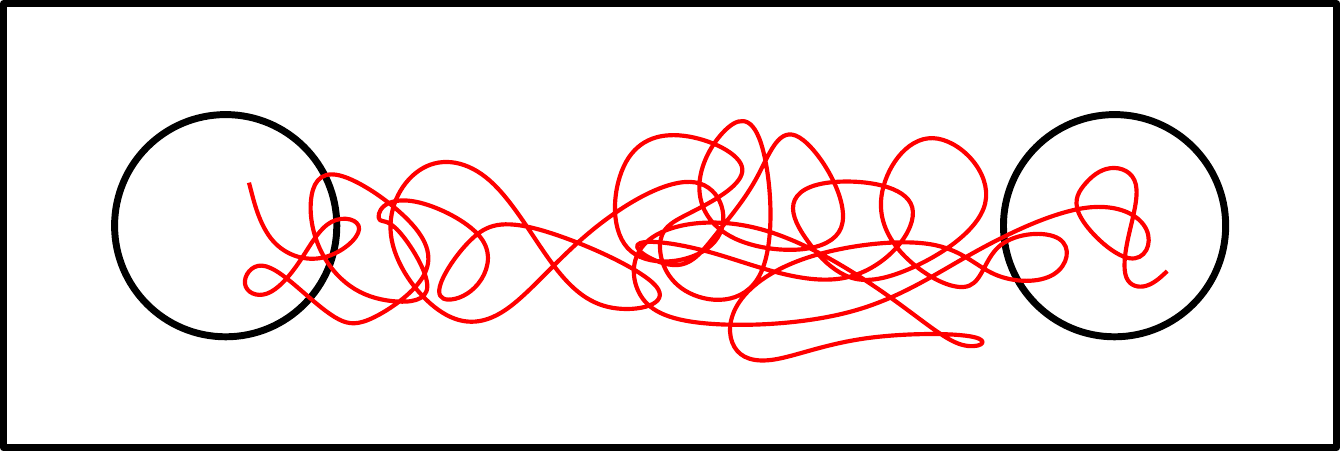}
		\caption{A schematic representation of the crossing estimate.}
		\label{fig:uniform_crossing}
	\end{center}
\end{figure}

\begin{assumption}
\begin{itemize}
	\item \textbf{(Good embedding)} For all $\delta$, the edges of $G^\d$ are embedded in the plane in such a way that they do not cross each other and are piecewise smooth. 
	\item \textbf{(Uniformly bounded density)} There exists a constant $M>0$ such that for all $\delta>0$ and for all $z \in \C$, the number of vertices in the square $z + [0, \delta]^2$ is bounded above by $M$.
	\item \textbf{(Connected)} For any $\delta>0$, the graph $G^\d$ is connected in the sense that it is irreducible for the continuous time random walk: for any two vertices $u$ and $v$, we have $\P_u(RW_1=v) >0$.
	\item \textbf{(Uniform crossing estimate)} We denote by $\mathcal{R}$ (resp. $\mathcal{R}'$) the horizontal (resp. vertical) rectangle $[0,3] \times [0,1]$ (resp. $[0,1] \times [0,3]$). Let $B_1 = B((1/2,1/2), 1/4)$ be the \textit{starting ball} and $B_2 = B((5/2,1/2),1/4)$ be the \textit{target ball}. There exist two universal constants $p>0$ and $\delta_0>0$ such that the following is true. For all $z \in \C$, $\ep >0$, $v \in \ep B_1$ and $\delta \leq \ep \delta_0$ such that $z+v \in \Gd$, we have 
	\begin{equation}
	\P_{v+z}(RW \mbox{ hits } \ep B_2 + z \mbox{ before exiting } \ep \mathcal{R} +z) > p.
	\end{equation}
\end{itemize}
\end{assumption}

Additionally, for the results of \cref{sec:winding} and the statements about the dimer model in \cref{sec:extensions} we will require the following additional assumption
\begin{assumption}
	Edges have uniformly bounded winding. More precisely there exists $C > 0$ such that for any edge $e$ (seen as a curve parameterized with positive speed), for all $s, t$, $|\arg (e'(s) ) - \arg( e'(t) ) | \leq C$, where the argument is taken so that this expression is continuous in $s$ and $t$ away from discontinuity of the derivative and with the natural corresponding convention for jumps.
\end{assumption}

In the following, it will be useful to have a canonical way to associate a subgraph of $G^\d$ to a open set, we therefore introduce the following notation. Given $D$ an open set of the plane, we let $D^\d$ be the subgraph obtained from the vertices and edges which are in $D$ in the embedding of $G^\d$. 
Note to be precise that the convention is that an edge connecting two points inside $D$ but extincting $D$ along the way is \emph{not} included in $D^\d$. This is natural in order to allow slits in $D$. As is usual we will call connected non-empty open sets domains.

When considering the random walk on $D^\d$, we will always use wired boundary conditions in the following sense : we add to $D^\d$ a ``cemetery'' point (later called with a slight abuse of notation $\partial D$) with an outgoing transition rate of $0$. For every edge of $G^\d$ with a starting point in $D^\d$, and touching $\partial D$, we add an edge with the same starting point and weight going to the cemetery. When considering spanning trees of $D^\d$, we will always talk of trees with wired boundary condition and oriented towards the cemetery.

We will also allow ourself to consider a discrete path as the continuous path obtained by concatenating the edges. With a slight abuse of notation, we will also denote by $\gamma [s,t]$ both the subpath of $\gamma$ between times $s$ and $t$ and its support. To be consistent with this convention and our definition of wired boundary condition, we will consider the exit time of a set $D$ to be $\sup \{ t : \gamma[0,t] \subset D\}$, i.e using an edge that starts and ends in $D$ but intersects $\partial D$ counts as an exit. To be consistent with the continuous time random walk, we will sometime write $X_{T} = e$ for an edge $e$ if $T$ is a time where a jump along the edge $e$ occurs.

\medskip

In the following, we will only be interested by (compact) curves up to their time parametrisation (in fact note that our assumptions on the random walk are invariant under time change). When considering the distance between curves, we will always use the $L^\infty$ distance up to time reparametrisation, i.e
\[
d( \gamma_1, \gamma_2) = \inf_{f, g} \sup_{t} | \gamma_1(f(t)) - \gamma_2(g(t)) |,
\]
where $f$ and $g$ are continuous, strictly increasing, from say $[0, 1]$ onto the domain of definition of $\gamma_1$ and $\gamma_2$. 
We will also say that a sequence of rectangles (which are translated and scaled versions of $\mathcal{R}$ and $\mathcal{R}'$) is $\epsilon$-close to a curve $\gamma$ if none of the long sides of the rectangles is larger than $\epsilon/2$, all the starting and target balls match in order and the curve connecting the center of these balls by straight lines in order is at distance at most $\epsilon/2$ of $\gamma$. 

We now state a few consequences of our assumptions on the walk. First a simple statement saying that the random walk has a positive probability to follow any trajectory.
\begin{lem}\label{lem:RWfollow}
	For any compact curve $\gamma$ from $[0,1]$ to $\C$, for any $\epsilon >0$, there exists $p > 0$ and a stopping time $T$ such that for all $\delta$ small enough, for all $v \in B( \gamma[0], \epsilon)$, we have
	\[
	\P_v( d(X[0, T], \gamma ) \leq  \epsilon ) > p.
	\]
\end{lem}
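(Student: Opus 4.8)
The plan is to reduce the statement to a finite concatenation of the uniform crossing estimates in the fourth bullet of the Assumption. First I would fix the compact curve $\gamma$ and the target precision $\epsilon$, and use compactness of $\gamma[0,1]$ together with uniform continuity to cover $\gamma$ by a finite sequence of rectangles (scaled and translated copies of $\mathcal{R}$ and $\mathcal{R}'$) that is $\epsilon/2$-close to $\gamma$ in the sense defined just before the lemma. Concretely, one chooses a fine partition $0 = t_0 < t_1 < \dots < t_N = 1$ so that $\gamma$ varies by at most some small $\eta \ll \epsilon$ on each $[t_{i-1}, t_i]$, and places at step $i$ a rectangle of the appropriate orientation (horizontal or vertical, whichever better matches the direction $\gamma(t_i) - \gamma(t_{i-1})$) scaled by a common factor $\ep_i$ comparable to $\eta$, with its starting ball around $\gamma(t_{i-1})$ and target ball around $\gamma(t_i)$. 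A mild point to check here is that an arbitrary direction can always be captured by an $\mathcal{R}$ or $\mathcal{R}'$ up to rotation by multiples of $\pi/2$, so in general one may need two or three crossings per step to turn corners; since the number of steps stays finite this is harmless. All scales $\ep_i$ are bounded below by a constant depending only on $\gamma$ and $\epsilon$, so there is a single $\delta_1 > 0$ (of the form $\min_i \ep_i \delta_0$) below which every crossing estimate in the chain is valid.

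Next I would run the random walk and apply the strong Markov property at the successive hitting times of the target balls. Define $T_0 = 0$ and inductively let $T_i$ be the first time after $T_{i-1}$ that the walk hits the $i$-th target ball; on the event that between $T_{i-1}$ and $T_i$ the walk stays in the $i$-th rectangle, the walk is at time $T_i$ inside the starting ball of rectangle $i+1$ (by our placement the target ball of rectangle $i$ is contained in the starting ball of rectangle $i+1$), so the hypothesis of the crossing estimate is met to iterate. By the strong Markov property and the uniform lower bound $p$, the probability that all $N$ crossings succeed is at least $p^{N}$, which is a positive constant $p' = p'(\gamma, \epsilon)$ independent of $\delta$ and of the precise starting point $v \in B(\gamma[0], \epsilon)$ (one absorbs the initial fuzz by taking the first rectangle slightly larger, or by prepending one extra crossing). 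Set $T = T_N$, which is a stopping time.

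It remains to argue that on this good event the traced-out path $X[0,T]$ is within $\epsilon$ of $\gamma$ in the reparametrisation-invariant $L^\infty$ distance $d(\cdot,\cdot)$. On the good event the walk is, during the $i$-th phase, confined to the $i$-th rectangle $\ep_i \mathcal{R} + z_i$, whose long side is at most $\epsilon/2$; meanwhile $\gamma[t_{i-1}, t_i]$ lies within $\eta \le \epsilon/2$ of the segment joining the ball centers, which itself lies in the same rectangle. Hence every point of $X[T_{i-1}, T_i]$ is within $\epsilon$ of the corresponding portion of $\gamma$, and choosing the reparametrisations $f, g$ to match up the phases (i.e.\ sending $[t_{i-1},t_i]$ to $[T_{i-1},T_i]$ monotonically) yields $d(X[0,T], \gamma) \le \epsilon$. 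I should also note that $T < \infty$ almost surely on the good event, and more importantly that $T$ is a genuine stopping time because each $T_i$ is; the probability bound does not require $T$ to be bounded.

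The main obstacle, such as it is, is bookkeeping rather than conceptual: one must set up the covering rectangles carefully enough that (a) consecutive target/starting balls nest correctly so the crossing estimate can be chained, (b) the orientations are legal copies of $\mathcal{R}$ or $\mathcal{R}'$ even when $\gamma$ turns, and (c) the uniform-in-$\delta$ threshold $\delta_1$ comes out of finitely many applications of the assumption with scales bounded away from $0$. There is also the minor subtlety that the walk must actually \emph{be} on the graph near $\gamma[0]$ for small $\delta$, which is where the condition ``for all $\delta$ small enough'' and the starting-ball hypothesis $z + v \in \Gd$ in the crossing estimate get used; one either restricts attention to $v$ that are vertices of $G^\d$ or notes that such vertices exist within $o(1)$ of any point of the plane by the uniformly bounded density assumption. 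None of these points is deep, so the proof is essentially a finite induction on crossing estimates.
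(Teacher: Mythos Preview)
Your proof is correct and follows essentially the same approach as the paper: approximate $\gamma$ by a finite chain of rectangles via uniform continuity, then iterate the uniform crossing estimate through the strong Markov property. The paper's own proof is considerably more terse (it first reduces to piecewise linear curves and then to rectangle chains without spelling out the bookkeeping you discuss), but the underlying argument is the same.
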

\begin{proof}
	Note that with the uniform topology on curves up to re-para\-metri\-sa\-tion, any $\gamma$ curve can be approximated by a finite concatenation of straight lines up to precision $\epsilon$. Indeed define a sequence of times $T_i = \inf\{ t > T_{i-1} : \gamma( t ) \not \in B( \gamma( T_{i-1}, \epsilon/2))\}$. By uniform continuity of $\gamma$, this sequence must contain finitely many terms and by construction the distance between $\gamma$ and the piecewise linear curve connecting the $\gamma( T_i)$ in order is at most $\epsilon$.
	
	Clearly any piecewise linear curve can be approximated by a finite sequence of rectangles and, by uniform crossing and the Markov property, the random walk has a positive probability to cross all of them in order. This concludes the proof.
\end{proof}

By considering the possibility that a walk does a ``full turn'' in an annulus, we deduce a Beurling estimate (Lemma 4.17 in \cite{BLR16}).
\begin{lem}[Non quantitative Beurling estimate]\label{beurling}
	There exists $\alpha > 0$ such that for all $r, R$ with $R \geq 2r$, for all $\delta$ small enough, for all $v \in G^\d$ and for any set $A$ connecting $\partial B( v, r)$ to $\partial B( v, R)$, 
	\[
	\P_v( RW \text{ exits } B( v, R) \text{ before hitting } A) < (\frac{r}{R})^\alpha .
	\]
\end{lem}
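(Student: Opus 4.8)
The plan is to split $B(v,R)\setminus B(v,r)$ into dyadic annuli and to show that inside each one the walk has a uniformly positive chance of making a full turn around $v$ — which, since $A$ crosses the annulus, forces it onto $A$ — and then to multiply the resulting estimates. Set $n=\lfloor\log_2(R/r)\rfloor$, so $n\geq\tfrac12\log_2(R/r)\geq\tfrac12$; for $0\leq k\leq n-1$ let $\mathcal A_k=\{z:2^kr\leq|z-v|\leq 2^{k+1}r\}$ and let $\tau_k$ be the first time the walk exits $B(v,2^kr)$. Since $2^nr\leq R$, on the event that the walk exits $B(v,R)$ before hitting $A$ its trace up to time $\tau_n$ avoids $A$, hence it crosses each $\mathcal A_k$ from $\partial B(v,2^kr)$ to $\partial B(v,2^{k+1}r)$ without touching $A$.

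Two ingredients are needed. \emph{(i) A topological fact}: any loop $\gamma$ contained in $\mathcal A_k$ with winding number $1$ around $v$ must meet $A$. Indeed, if $\gamma\cap A=\emptyset$, the winding number $n_\gamma(\cdot)$ is integer valued and locally constant on $\C\setminus\gamma$, hence constant on the connected set $A$; but $A$ contains a point $a_-$ at distance $r\leq 2^kr$ from $v$, and the segment $[v,a_-]\subseteq\overline{B(v,2^kr)}$ meets $\gamma\subseteq\{|z-v|\geq 2^kr\}$, if at all, only at $a_-\notin\gamma$, so $n_\gamma(a_-)=n_\gamma(v)=1$; and $A$ contains a point $a_+$ at distance $R\geq 2^{k+1}r$ from $v$, for which a ray from $a_+$ to infinity pointing away from $v$ stays off $\gamma$ and gives $n_\gamma(a_+)=0$ — a contradiction. \emph{(ii) A probabilistic fact}: there is a universal $q>0$ such that, for all $\delta$ small enough relative to $2^kr$, from any position at distance at most $\tfrac32\cdot 2^kr$ from $v$ the walk subsequently traces out, before it exits $B(v,2^{k+1}r)$, a loop that lies in $\mathcal A_k$ and winds once around $v$, with probability at least $q$.

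Granting these, let $B_k$ be the event that during $[\tau_k,\tau_{k+1}]$ the walk does \emph{not} trace a loop as in (ii). By (i) such a loop would hit $A$, so on the event $\{\text{exit }B(v,R)\text{ before }A\}$ every $B_k$ holds, whence $\P_v(\text{exit }B(v,R)\text{ before }A)\leq\P_v\big(\bigcap_{k=0}^{n-1}B_k\big)$. The complement of $B_k$ is measurable with respect to the walk after $\tau_k$, at which time the walk sits at distance comparable to $2^kr$ from $v$, so by (ii) and the strong Markov property its conditional probability given $\mathcal F_{\tau_k}$ is at least $q$; peeling the factors off one at a time gives $\P_v\big(\bigcap_kB_k\big)\leq(1-q)^n\leq(1-q)^{\frac12\log_2(R/r)}=(r/R)^{\alpha}$ with $\alpha=-\tfrac12\log_2(1-q)>0$, and a slight decrease of $\alpha$ makes the inequality strict.

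It remains to prove (ii), which is the real content. The construction is to place a ring of $O(1)$ translated and rescaled copies of $\mathcal R$ and $\mathcal R'$ of side length $\asymp 2^kr$ inside $\mathcal A_k$ so that the target ball of each coincides with the starting ball of the next and, on closing the ring up, the centres of those balls wind once around $v$; a preliminary crossing routes the walk from its current position to the first starting ball. By the uniform crossing estimate and the Markov property — essentially \cref{lem:RWfollow} — the walk follows this prescription, and thereby goes once around $v$ inside $\mathcal A_k$ without leaving $B(v,2^{k+1}r)$, with probability at least a constant depending only on $p,\delta_0,M$; the uniformity in $\delta$ and in the scale is inherited from the corresponding invariance of the crossing estimate. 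The delicate point — and the one I expect to be the main obstacle to a self-contained argument — is to check that on this event the walk's \emph{trace} genuinely contains a loop of winding number $1$ about $v$, rather than merely a curve whose argument relative to $v$ increases by $2\pi$ (which on its own need not meet $A$); i.e.\ that the excursion can be arranged to close up. This is exactly the step carried out in \cite[Lemma~4.17]{BLR16}, which one could alternatively just cite.
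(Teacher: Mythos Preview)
Your proposal is correct and follows exactly the approach the paper indicates: the paper gives no proof either, merely noting that the estimate follows ``by considering the possibility that a walk does a `full turn' in an annulus'' and citing Lemma~4.17 of \cite{BLR16}. Your dyadic decomposition, the winding argument (i), and the uniform-crossing construction (ii) are precisely the ingredients of that reference, and you correctly isolate the one delicate step---closing the trace into an honest loop---and defer it to the same citation.
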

We will also need a dual statement about the probability that the random walk first exits a domain from a marked point.
\begin{lem}[Harmonic measure bound]\label{lem:harmonic_measure}
	There exists $\alpha$ such that for all $r, R$ with $R \geq 2r$, for all $\delta$ small enough, for all simply connected open set $D$, and all points $v, w$ with $v \in D^\d$ and $w \in \partial D$ and $\abs{v-w} > R$, we have
	\[
	\P_v(\text{ RW exits $D$ inside $B(w, r)$}) < ( \frac{r}{R})^\alpha .
	\]
\end{lem}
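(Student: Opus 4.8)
The plan is to deduce this from the Beurling-type estimate already established in \cref{beurling}. The key observation is a topological one: since $D$ is simply connected and $w \in \partial D$, any connected set inside $\overline{D}$ that reaches near $w$ must be ``protected'' by a portion of the boundary $\partial D$. More precisely, I want to find a set $A$ connecting $\partial B(w, r)$ to $\partial B(w, r')$ for some intermediate radius $r'$ with $r \leq r'/2 \leq R/4$, such that $A$ lies outside $D$ (or on $\partial D$), and such that for the random walk started at $v$ to exit $D$ inside $B(w,r)$, it must cross the annulus $B(w, r') \setminus B(w, r)$ without hitting $A$ — because hitting $A \subset \partial D$ would mean the walk has already exited $D$ elsewhere.

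First I would fix an intermediate radius, say $r' = \sqrt{rR}$ (so that $r'/r = R/r' = \sqrt{R/r} \geq \sqrt 2$), and consider the situation inside the annulus $\mathcal{A} = B(w, r') \setminus B(w, r)$. Since $w \in \partial D$, the point $w$ itself is not in $D$; because $D$ is open and simply connected (hence its complement $\C \setminus \overline D$ together with $\partial D$ forms a connected ``blocking'' structure near any boundary point), there must exist a connected set $A$ contained in $\C \setminus D$ that meets both $\partial B(w, r)$ (indeed contains $w$-adjacent points, or at least comes within $\delta$ of $\partial B(w,r)$) and $\partial B(w, r')$. The cleanest way to produce $A$: take the connected component of $w$ in $\overline{D}^c \cup \partial D$; if this component stays inside $B(w, r')$ then $\partial B(w,r')$ would be entirely surrounded inside $D$, contradicting that $D$ is bounded and simply connected with $w$ on its boundary — so the component reaches $\partial B(w, r')$, giving the desired crossing set $A$ at scale from (essentially) $r$ to $r'$.

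Then I would run the argument: for the walk from $v$ (with $|v - w| > R > r'$, so the walk starts outside $B(w, r')$) to exit $D$ somewhere inside $B(w, r)$, it must in particular enter $B(w, r')$ and then reach $\partial B(w, r)$ while still inside $D$, i.e. without having touched $A$. By the strong Markov property applied at the first hitting time of $\partial B(w, r')$, and by \cref{beurling} applied at that entrance point with the set $A$ (rescaling so that the inner radius is $\asymp r$ and outer radius $\asymp r'$), this probability is at most $(r/r')^{\alpha'} = (r/R)^{\alpha'/2}$ for the Beurling exponent $\alpha'$; renaming $\alpha = \alpha'/2$ gives the claim. One must be slightly careful that \cref{beurling} is stated for the walk reaching the outer sphere before hitting $A$, whereas here I need it reaching the inner sphere before hitting $A$ — but $A$ separates the two spheres, so a walk going from the outer to the inner sphere without hitting $A$ is impossible unless... actually it \emph{is} possible since $A$ need not disconnect the annulus; so instead I apply \cref{beurling} in the form: starting from $\partial B(w,r')$, the probability to reach $\partial B(w, r)$ before hitting $A$ is bounded, which follows because $A$ connects the two boundary circles and one can decompose the annulus $B(w,r') \setminus B(w,r)$ further — hitting $\partial B(w,r)$ before $A$ forces the walk to wind around inside a sub-annulus avoiding $A$, and the standard ``full turn'' argument behind \cref{beurling} bounds this by a power of the radius ratio.

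The main obstacle I expect is the topological input: cleanly constructing the blocking set $A$ from the hypothesis that $D$ is simply connected and $w \in \partial D$, and verifying it genuinely obstructs the walk, i.e. that reaching $B(w,r)$ from outside $D$'s copy of $B(w,r')$ while staying in $D$ forces an event controlled by \cref{beurling}. Once the correct geometric picture is in place (component of $w$ in the complement crossing the annulus, playing the role of the Beurling set), the probabilistic estimate is a direct application of \cref{beurling} together with the strong Markov property, with the factor $\tfrac12$ loss in the exponent coming from the choice $r' = \sqrt{rR}$.
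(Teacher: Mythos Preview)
Your final approach is correct and coincides with what the paper intends: the paper's ``proof'' is the single sentence that the argument is identical to that of \cref{beurling} with $w$ playing the role of infinity, i.e.\ one reruns the full-turn argument in dyadic annuli centred at $w$ rather than at $v$, using that $\C\setminus D$ crosses each such annulus.

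A couple of remarks. First, the detour through an intermediate radius $r'=\sqrt{rR}$ and the attempt to invoke \cref{beurling} as a black box are unnecessary: as you yourself notice, the geometry is inverted (you need to reach the inner circle from the outer one, not the reverse), so \cref{beurling} does not apply literally and you are forced to repeat the full-turn argument anyway. Once you do that, you may as well run it directly across the whole range of scales from $r$ to $R$ centred at $w$, which recovers the exponent $\alpha$ without the factor $1/2$ loss.

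Second, the topological input can be stated more cleanly and without assuming $D$ bounded: for any simply connected open $D\subsetneq\C$, the complement $\hat\C\setminus D$ is connected, hence the connected component of $\C\setminus D$ containing $w$ is unbounded and therefore crosses every annulus $B(w,2^k r)\setminus B(w,2^{k-1}r)$ for $2^k r\le R$. This gives the blocking set $A$ in each dyadic shell and the full-turn argument concludes exactly as in the proof of \cref{beurling}.
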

The proof of this estimate is identical to the Beurling estimate with just $w$ taking the role of infinity so we leave details to the reader.

Arguments similar to the Beurling estimate prove that discrete harmonic functions must satisfy the Harnack inequality. Together with the representation of a walk conditioned on its endpoint as an h-transform, this implies that the uniform crossing also holds for walks conditioned on their exit point from a domain, at least as long as the rectangle that must be crossed stays far from the boundary (see Lemma 4.4 in \cite{BLR16} which is stated for annuli but only uses the Harnack inequality).
\begin{lem}[Conditional uniform crossing]\label{crossing_conditional}
	Let $\mathcal{R}$, $B_1$, $B_2$ be as in the uniform crossing assumption. For all $ \alpha >0$ there exists $p>0$ and $\delta_0 >0$ such that the following holds. For all $z\in \C$, $\epsilon >0$, $\delta \leq \epsilon \delta_0$, $v \in (\epsilon B_1 + z)^\d$, $D$ such that $d( \partial \epsilon \mathcal R + z, \partial D) > \alpha \epsilon$ and en edge $w$ such that $\P_v( X(T_D) = w ) >0$,
	\[
	\P_{v+z}(X \mbox{ hits } \epsilon B_2 + z \mbox{ before exiting } \epsilon \mathcal{R} +z  |  X(T_D) = w  ) > p,
	\]
		where $T_D$ is the exit time from $D$.
\end{lem}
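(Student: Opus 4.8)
The plan is to deduce this from the unconditional uniform crossing estimate (Lemma~\ref{lem:RWfollow} / the Uniform crossing assumption) by writing the conditioned walk as a Doob $h$-transform and controlling the $h$-factor via the Harnack inequality. First I would set up the $h$-transform: conditionally on $\{X(T_D) = w\}$, the walk started at $v+z$ and killed at $T_D$ is a Markov chain whose transition weights are $\tilde w(x\to y) = w(x\to y)\, h(y)/h(x)$, where $h(x) = \P_x(X(T_D)=w)$ is harmonic on $D^\d$ (vanishing at the cemetery except through the edge $w$) and, crucially, is a \emph{strictly positive} harmonic function on all of $D^\d$ by the Connected assumption. So the ratio of the probability under the conditioned law of any fixed finite trajectory $x_0\to x_1\to\cdots\to x_k$ staying in $D^\d$ to its probability under the unconditioned law is exactly $h(x_k)/h(x_0)$, a telescoping product.

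Next I would use the hypothesis $d(\p(\epsilon\mathcal R + z),\p D) > \alpha\epsilon$ together with the Harnack inequality to bound this ratio. Since $h$ is positive and harmonic on $D^\d$, on any ball $B(x, \alpha\epsilon/2)$ contained in $D^\d$ we have $h(y) \le C_\alpha h(x')$ for all $y,x'$ in a slightly smaller concentric ball, with $C_\alpha$ depending only on $\alpha$ (this is the discrete Harnack inequality, which as noted in the excerpt follows from Beurling-type arguments and our uniform assumptions; a chaining argument along a bounded number of overlapping balls covering $\epsilon\mathcal R + z$ — the number being universal after rescaling by $\epsilon$ — upgrades this to: $h(y)/h(y') \le C'_\alpha$ for all $y,y'$ in $(\epsilon\mathcal R+z)^\d$, uniformly in $\delta \le \epsilon\delta_0$, $z$, $\epsilon$, and $D$). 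Therefore, for the specific event $\mathcal E = \{X \text{ hits } \epsilon B_2 + z \text{ before exiting } \epsilon\mathcal R + z\}$, decomposing over all trajectories realizing $\mathcal E$ (all of which stay in $(\epsilon\mathcal R+z)^\d$ until the hitting time, then continue), we get
\[
\P_{v+z}(\mathcal E \mid X(T_D)=w) \;\ge\; \frac{1}{C'_\alpha}\,\P_{v+z}(\mathcal E) \;\ge\; \frac{p_0}{C'_\alpha},
\]
where $p_0>0$ is the unconditional crossing probability from the Uniform crossing assumption. Setting $p = p_0/C'_\alpha > 0$ finishes the proof.

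The main obstacle is making the Harnack chaining uniform in \emph{all four} parameters simultaneously — $\delta$, $\epsilon$, $z$, and the domain $D$. The scaling invariance of the assumptions (the uniform crossing estimate and Beurling estimate are stated with explicit $\epsilon$-scaling) reduces the $\epsilon$ and $z$ dependence to the $\epsilon=1$, $z=0$ case; translation-invariance of the embedding density bound handles $z$; and the point is that the Harnack constant one extracts from the Beurling estimate depends only on the ratio of radii, hence only on $\alpha$, and the number of balls needed to chain across a unit rectangle while staying at distance $\ge \alpha$ from $\p D$ is bounded by a function of $\alpha$ alone. The dependence on $D$ is then harmless because $h$ only ever enters through ratios at nearby points, all controlled by the local Harnack inequality, which does not see the global geometry of $D$. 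This is precisely the content of Lemma~4.4 of \cite{BLR16} cited in the excerpt, so I would largely refer to that argument, indicating only the modification needed to pass from annuli to the present rectangle-and-domain setup.
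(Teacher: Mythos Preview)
Your proposal is correct and follows exactly the approach indicated in the paper: represent the conditioned walk as a Doob $h$-transform with $h(x)=\P_x(X(T_D)=w)$, use the Harnack inequality (obtained from the Beurling-type estimates) to bound the ratio $h(y)/h(y')$ uniformly on the rectangle, and then appeal to the unconditional uniform crossing estimate; the paper gives no further detail beyond this outline and a reference to Lemma~4.4 of \cite{BLR16}.
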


Finally we will need an estimate saying that if a random walk is conditioned to exit at a certain point and starts close to it, then it is unlikely to ever go far away. If $D$ is an open set, denote by $T_D$ the exit time from that set (in the sense mentioned in the beginning of the section).
\begin{lem}
	There exists $C$ and $p >0$ such that for any open set $D$ with a smooth boundary, for all $r >0$ small enough, and all $\delta$ small enough the following holds. For all $v$ with $v \in D^\d$ and all edges $w$ such that $\P(X_{T_D} = w) > 0$ and $2r \geq d( v, \partial D) \geq r$, we have
	\[
	\P_v( X[0, T_D] \subset B(w, Cr) | X(T_D) = w ) > p.
	\]
	Furthermore $p$ can be chosen close to $1$ by taking $C$ large enough.
\end{lem}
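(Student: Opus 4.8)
The plan is to realise the conditioned walk as a Doob transform and reduce the claim to a dyadic decay estimate for a hitting functional. Fix a point $\hat w$ on the closure of the edge $w$; for $\delta$ small, $\diam w$ is smaller than all the radii appearing below, so reaching $w$ forces the walk into $B(\hat w,\rho)$. Since the conclusion is only meaningful when $v\in B(\hat w,Cr)$, I will give the argument for $v$ with $|v-\hat w|\le 2r$ (the regime in which the lemma is applied), and I will use that $D$ is simply connected (as it is in every application; some such restriction is genuinely needed — in an annulus with $\hat w$ on a vanishingly small inner boundary, $h$ below is essentially constant away from $\hat w$ and the statement fails). Write $h(x)=\P_x(X_{T_D}=w)$, a nonnegative function, harmonic for the walk in $D^\d$ with boundary data $\mathbf 1_{\{\cdot=w\}}$; a standard Doob-transform computation identifies the walk conditioned on $\{X(T_D)=w\}$ with the $h$-transform of the killed walk, call it $\hat X$ with law $\hat\P_v$. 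Set $H(\rho):=\sup\{h(x):x\in\partial B(\hat w,\rho)\cap D^\d\}$. Applying the strong Markov property at the first hitting time $\tau$ of $\partial B(\hat w,Cr)$ and then undoing the $h$-transform,
\[
\hat\P_v\big(\hat X[0,T_D]\not\subset B(\hat w,Cr)\big)=\frac{\E_v\!\big[\mathbf 1_{\tau<T_D}\,h(X_\tau)\big]}{h(v)}\le\frac{H(Cr)}{h(v)},
\]
so it is enough to prove $H(Cr)\le\eta(C)\,h(v)$ with $\eta(C)\to 0$ as $C\to\infty$; the ``furthermore'' part of the statement is then automatic.

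The heart of the matter is the dyadic inequality
\[
H(2\rho)\le\theta_0\,H(\rho)\qquad\text{for all }\rho\text{ with }2r\le\rho\le Cr/2,
\]
for a universal $\theta_0<1$. Indeed, for $x\in\partial B(\hat w,2\rho)\cap D^\d$ one has $\{X_{T_D}=w\}\subset\{\sigma_\rho<T_D\}$, where $\sigma_\rho$ is the first hitting time of $\partial B(\hat w,\rho)$, so strong Markov at $\sigma_\rho$ gives $h(x)=\E_x[\mathbf 1_{\sigma_\rho<T_D}h(X_{\sigma_\rho})]\le\P_x(\sigma_\rho<T_D)\,H(\rho)$, and the claim reduces to the uniform bound $\P_x(\sigma_\rho<T_D)\le\theta_0<1$, i.e. that from distance $2\rho$ the walk has a uniformly positive chance of being killed, or of leaving $B(\hat w,3\rho)$, before coming within $\rho$ of $\hat w$. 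Here the smoothness of $\partial D$ and simple connectivity enter: for $r$ small enough (depending on $D$ and $C$) the boundary arc of $\partial D$ through $\hat w$ is nearly flat throughout $B(\hat w,3\rho)$ and is the only piece of $\partial D$ meeting that ball, so $D\cap B(\hat w,3\rho)$ is half-plane-like. One then obtains the required escape probability either by following a radial segment issued from $x$ away from $\hat w$ (which stays at distance $\ge 2\rho-\epsilon$ from $\hat w$ and either exits $B(\hat w,3\rho)$ or runs into $\partial D$) via \cref{lem:RWfollow}, or directly from the Beurling estimate \cref{beurling}, using that the connected barrier $\partial D$ joins $\partial B(\hat w,\rho)$ to $\partial B(\hat w,3\rho)$. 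Iterating the inequality over the $\asymp\log_2 C$ scales between $2r$ and $Cr$ yields $H(Cr)\le\theta_0^{\lfloor\log_2(C/2)\rfloor}\,\sup_{2r\le\rho< 4r}H(\rho)=O(1)\cdot C^{-c_0}\cdot\sup_{2r\le\rho<4r}H(\rho)$ with $c_0=\log_2(1/\theta_0)>0$.

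It remains to bound $\sup_{2r\le\rho<4r}H(\rho)\le C_H\,h(v)$. For $x$ on such a sphere with $d(x,\partial D)\ge c r$, the point $x$ and $v$ (which satisfies $d(v,\partial D)\ge r$ and $|v-\hat w|\le 2r$) are joined by a chain of $O(1)$ balls of radius $\asymp r$ contained in $D$ and bounded away from $\partial D$, so the Harnack inequality gives $h(x)\le C_H h(v)$; if $d(x,\partial D)$ is smaller, then the walk from $x$ exits $D$ far from $w$ with high probability, and decomposing at the last visit to the sphere (or simply at the first hitting time of $\partial B(\hat w,r)$) together with \cref{beurling} again shows $h(x)\le C_H h(v)$. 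Combining the three steps, $\hat\P_v(\hat X[0,T_D]\subset B(\hat w,Cr))\ge 1-C''C^{-c_0}$, which is $>p$ for a fixed $p>0$ and tends to $1$ as $C\to\infty$.

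The step I expect to be the main obstacle is the uniform estimate $\theta_0<1$ in the dyadic inequality: it is precisely here that one exploits $\partial D$ acting as a single barrier emanating from $w$, which is what forces the hypotheses of simple connectivity and of local smoothness of $\partial D$ (hence the restriction to small $r$). By comparison the Harnack/last-exit bookkeeping in the base case, and the measure-theoretic care needed to invoke the strong Markov property of the continuous-time walk at hitting times of circles, are routine.
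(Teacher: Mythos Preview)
Your proof is correct in outline and arrives at the same inequality $\hat\P_v(\text{escape})\le H(Cr)/h(v)$, but the way you turn this into a power of $C$ differs from the paper. The paper does \emph{not} iterate on $H$: it keeps the extra factor $\P_v(X\text{ exits }B(w,Cr)\text{ before }\partial D)$ in the numerator and bounds that by $(C-1)^{-\alpha}$ via a single application of Beurling to the walk from $v$ (using that $d(v,\partial D)\le 2r$ and that $\partial D$ stretches across $B(w,Cr)$). It then shows directly that $H(Cr)\le h(v)/p$ with $p$ \emph{independent of $C$} by a monotone--path trick: from the maximiser of $h$ on $\partial B(w,Cr)$ there is a path $\gamma\subset B(w,Cr)\cap D$ to $w$ along which $h$ is increasing (steepest ascent; $\gamma$ cannot leave $B(w,Cr)$ since $h\le H(Cr)$ outside by the maximum principle). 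This $\gamma$ separates $B(w,Cr)\cap D$ into two components, and by \cref{lem:RWfollow} the walk from $v$ follows the circle $\partial B(w,|v-w|)$ towards the far side with probability $p>0$, hitting $\gamma$ before $\partial D$; optional stopping then gives $h(v)\ge p\,H(Cr)$.

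Your dyadic bound $H(2\rho)\le\theta_0 H(\rho)$ is a valid alternative for the $C^{-c_0}$ decay, but your base case (Step~4) is less routine than you indicate. For $x\in\partial B(\hat w,\rho)$ with $d(x,\partial D)$ small, decomposing at the first hit of $\partial B(\hat w,r)$ gives $h(x)\le c^\alpha H(r)$, and you then need $H(r)\lesssim h(v)$ --- the same comparison one scale down, so the iteration does not terminate. What is really required there is a boundary Harnack principle (true for smooth $\partial D$, but not just ``Harnack plus Beurling''); the paper's monotone--path trick sidesteps this entirely by performing the comparison at scale $Cr$ in one stroke. Finally, global simple connectivity is a red herring: smoothness of $\partial D$ already makes $B(w,Cr)\cap D$ half--plane--like for $r$ small, which is all either argument uses.
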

\begin{proof}
	Fix $D, v, w$ and $r$ as in the statement of the lemma, where the condition on the size of $r$ will be made precise later. Let us also write $h(u) := \P_u (X(T_D) = w )$ and note that this is harmonic for the random walk on $D^\d$. By the Markov property and Beurling's estimate
	\begin{align*}
	\P_v(  X[0, T_D] \not\subset & B(w, Cr), X(T_D) = w )  \\
	&\leq \P_v(  X[0, T_D] \not\subset B(w, Cr) ). \max_{\partial B(x, Cr)^\d \cap D^\d} h(u) \\
	& \leq \frac{1}{(C-1)^\alpha} \max_{ \partial B(x, Cr)^\d \cap D^\d} h(u),
	\end{align*}
where we use a maximum since $\partial B(x, Cr)^\d \cap D^\d$ is finite. Let $\gamma$ be a path in $ B(w, Cr) \cap D$ connecting a point where this maximum is reached to $w$ and along which $h$ is increasing. Note that by harmonicity of $h$,
\[
\P_v(  X[0, T_D] \not\subset B(w, Cr), X(T_D) = w ) \leq \frac{1}{(C-1)^\alpha} \frac{h(v)}{\P_v(X_{T_D \wedge T_\gamma} \in \gamma )},
\]
where $T_\gamma$ is the hitting time of $\Gamma$. Note that $\gamma$ separates $B( w, Cr) \cap D$ into exactly two connected components, call $A_1$ the component containing $u$ and $A_2$ the other one. Since $D$ is smooth, if $r$ is chosen small enough $\partial D$ is almost linear in a neighbourhood of $w$ and $\partial A_2 \cap \partial D$ contains an arc connecting $w$ and a point on $\partial B(w, Cr)$. By \cref{lem:RWfollow}, the random walk starting in $v$ has a positive probability $p>0$ (independent of $C$) to follow $\partial B( w, |v-w|)$ in the direction leading to $\partial A_2$ up to an error $r/2$ and on that event we must have $T_\gamma < T_D$. Overall,
\[
\P_v(  X[0, T_D] \not\subset B(w, Cr), X(T_D) = w ) \leq \frac{1}{(C-1)^\alpha} \frac{h(v)}{p},
\]
which concludes.
\end{proof}

\subsection{Loop soup}\label{sec:loop_soup}

In this section, we recall the definition of the random walk loop soup measure as well as its relation to the loop-erased random walk. We also provide a basic estimate on the mass of macroscopic loops in the measure. We will keep the exposition short and refer to chapter 9 in \cite{Lawler2010} for much more details on this topic.

Let us denote by $q( v \to v')$ the discrete transition rate from $v$ to $v'$, i.e the edge weights normalised such that $\sum_{v'} q( v \to v') = 1$. We call a finite sequence $\gamma =(v_0, \ldots, v_n)$ such that $v_0 = v_n$ a \emph{rooted loop} and call $v_0$ its $\emph{root}$. We call length of the loop simply its number of steps, i.e $\abs{(v_0, \ldots, v_n)} = n$. Note that there is no restriction to simple loops and that the length of a loop is can be much smaller than the size of its support if one has many multiple point. An \emph{unrooted loop} (which we also just call a loop later) is an equivalence class of rooted loop by cyclical permutation.
\begin{defin}
	The \emph{rooted loop measure} $\Lambda = \Lambda^\d$ on $G^\d$ is the measure on rooted loops defined as
	\[
	\Lambda (v_0, \ldots, v_n) = \frac{\prod_{i=0}^{n-1} q( v_i \to v_{i+1})}{n}.
	\]
	The unrooted loop measure is the image of the rooted measure by the natural ``forgetting the root'' map\footnote{Typically the mass of an unrooted loop is the product of transition probabilities but there are small complications if a loop goes  several times around the same path.}. We call \emph{loop soup} of $G^\d$ the Poisson process with intensity given by the unrooted loop measure.
	
	The loop measure on $D^\d$ for a subgraph $D^\d$ is simply the restriction of $\Lambda$ to loops that do not exit $D^\d$ and the loop soup on $D^\d$ is similarly the restriction of the loop soup.
\end{defin}

In this paper, the loop soup will be used to describe the conditional law of a random walk given its loop-erasure.

\begin{construction}\label{cons:walk_given_erasure}
	Fix a subgraph $D^\d$, a path $\gamma = (\gamma_1, \ldots, \gamma_n)$ from some $v \in D^\d$ to $\partial D^\d$ and a finite collection of loops $(\ell_k)_{k\geq 0}$. We will define a path $X$ as follow.
\begin{itemize}
	\item Apply a uniform permutation of the indices to the finite sequence $(\ell_k)$ (we however keep the same notation for simplicity).
	\item Define $K_1 = \{ k : \gamma_1 \in \ell_k \}$, for each $k \in K_1$ let $\tilde \ell_k$ denote a version of $\ell_k$ rooted at $\gamma_1$ (choose uniformly over all possibilities if necessary) and define $L_1 = \bigoplus_{k \in K_1} \tilde \ell_k$, i.e $L_1$ is the concatenation of all the loops that intersect $\gamma_1$. Note that when concatenating rooted loops, one must omit some repetition of the root but we hope the definition is clear.
	\item Define $K_2 = \{ k : k \not \in K_1 , \gamma_2 \in \ell_k \}$ and set $L_2 = \bigoplus_{k \in K_2} \tilde \ell_k$ with similar notation as above.
	\item Define  $K_3 = \{ k : k \not \in K_1 \cup K_2 , \gamma_3 \in \ell_k \}$ and so on up to $K_{n_1}$ and $L_{n-1}$.
	\item Finally we set $X = \big( \bigoplus_{i=1}^{n-1} L_i \big)\oplus \gamma_{n}$. Note that the edges of $\gamma$ appear as the transitions from $L_i$ to $L_{i+1}$. 
\end{itemize}
\end{construction}

The above procedure is motivated by the following result which can be found for example in \cite{Jan2011} as Proposition 28 together with Remark 21.
\begin{lem}\label{walk_given_erasure}
	Fix $D^\d$ together with a path $\gamma$ ending in $\partial D^\d$ and apply the construction \ref{cons:walk_given_erasure} to a sample $(\ell_k)$ of the loop soup on $D^\d$. The law of the resulting $X$ is given by the conditional law of a random walk stopped when it exits $D^\d$ given that it's loop-erasure if $\gamma$.
\end{lem}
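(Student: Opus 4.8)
The plan is to combine the deterministic behaviour of loop-erasure on a single walk with the Poissonian structure of the loop soup; the precise combinatorial identity at the core is the one quoted from \cite{Jan2011}.

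\emph{Step 1: the deterministic decomposition.} Fix a walk $\omega$ from $v$ to $\partial D^\d$ with $\fLE(\omega) = \gamma = (\gamma_1, \ldots, \gamma_n)$, where $\fLE$ is the forward loop-erasure (the convention implicit in \cref{cons:walk_given_erasure}). For each $i$ let $s_i$ be the last time $\omega$ visits $\gamma_i$. Then $\omega$ decomposes uniquely as $\Theta_1 \oplus (\gamma_1 \to \gamma_2) \oplus \Theta_2 \oplus \cdots \oplus (\gamma_{n-1} \to \gamma_n)$, where $\Theta_i$ is the loop traced by $\omega$ between its first visit to $\gamma_i$ (at time $s_{i-1}+1$, or $0$ for $i=1$) and time $s_i$. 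Since $\gamma_1, \ldots, \gamma_{i-1}$ are never visited after time $s_{i-1}$, the loop $\Theta_i$ is a loop rooted at $\gamma_i$ which stays in $D^\d \setminus \{\gamma_1, \ldots, \gamma_{i-1}\}$, and conversely any such tuple $(\Theta_i)_{1 \le i \le n-1}$ (with the trivial loop at $\gamma_i$ allowed) arises from exactly one $\omega$ with $\fLE(\omega) = \gamma$. As the weight of a walk is the product of its transition probabilities, this bijection factorises $w(\omega) = w(\gamma) \prod_i w(\Theta_i)$; summing over the $\omega$ with $\fLE(\omega) = \gamma$ then shows that, conditionally on $\fLE(\omega) = \gamma$, the loops $(\Theta_i)_{1 \le i \le n-1}$ are independent, $\Theta_i$ being a loop rooted at $\gamma_i$ in $D^\d \setminus \{\gamma_1, \ldots, \gamma_{i-1}\}$ with mass proportional to its weight. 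It therefore suffices to prove that the family $(L_i)_{1 \le i \le n-1}$ produced by \cref{cons:walk_given_erasure} has this joint law.

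\emph{Step 2: the loop soup.} Here we use that the loop soup on $D^\d$ is a Poisson process (with a.s.\ finitely many loops, $D^\d$ being a finite graph killed at $\partial D$), so that the loops avoiding a given set form an independent Poisson process, namely the loop soup on the graph with that set deleted. This lets us analyse \cref{cons:walk_given_erasure} one vertex at a time: conditionally on the loops removed in the first $i-1$ steps (those touching $\gamma_1, \ldots, \gamma_{i-1}$), the leftover soup is distributed as the loop soup on $D^\d \setminus \{\gamma_1, \ldots, \gamma_{i-1}\}$, and $L_i$ is obtained from it by concatenating, in uniformly random order and with a uniformly chosen root among their visits to $\gamma_i$, exactly the loops that visit $\gamma_i$. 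So everything reduces to the single-vertex statement: for a loop soup on a graph $H$ and a vertex $x$, the concatenation in uniform order and uniform $x$-rooting of all soup loops through $x$ is a loop rooted at $x$ with law proportional to its weight. This is exactly Proposition 28 with Remark 21 of \cite{Jan2011}; its short proof computes the generating functional of the soup loops through $x$ by the exponential formula and recognises, after the uniform reordering and rerooting — and using $-\log(1-s) = \sum_{m\ge 1} s^m/m$, which absorbs the $1/n$ in the definition of $\Lambda$ — the law of a geometric number of i.i.d.\ excursions at $x$ (paths from $x$ to $x$ not visiting $x$ in between), concatenated, which is precisely the excursion structure of $\Theta$ appearing in Step 1. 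Combining with Step 1 identifies the law of the output $X$ of \cref{cons:walk_given_erasure} with the conditional law of the random walk stopped at $\partial D^\d$ given $\fLE = \gamma$.

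\emph{Main obstacle.} The elementary part — Step 1 and the restriction property used in Step 2 — is routine; the real content is the single-vertex identity and, inside it, the bookkeeping of multiplicities. A soup loop may visit $x$ several times, so one must carefully track the number of $x$-rootings of an unrooted loop, the interplay of this count with the $1/n$ weight in $\Lambda$ and with the ``forget the root'' map, and the fact that a Poisson family of loops — each itself a cyclic word of excursions at $x$ — must be reassembled, through the uniform permutation and uniform rerooting, into a \emph{single} geometric concatenation of excursions carrying the correct weights. These two features of \cref{cons:walk_given_erasure} are exactly what make the combinatorics close, and checking this is the crux of the argument we import from \cite{Jan2011}.
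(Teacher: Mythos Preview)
Your proposal is correct and aligns with the paper's treatment: the paper does not supply its own proof but simply records the lemma as a restatement of Proposition~28 together with Remark~21 in \cite{Jan2011}. Your write-up unpacks that reference along the standard lines --- the deterministic decomposition of a walk with prescribed forward erasure into independent rooted loops $\Theta_i$ in $D^\d\setminus\{\gamma_1,\ldots,\gamma_{i-1}\}$, the Poisson restriction property reducing the soup side to a single-vertex identity, and the combinatorial/exponential-formula computation at that vertex --- so the core ingredient is exactly the one the paper cites.
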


\begin{remark}
	There is a converse statement which says that if one sample a wired uniform spanning tree of $D^\d$ using Wilson's algorithm (see next section), then the set of erased loops forms in a sense an instance of the loop soup of $D^\d$. There are however a few subtle points in such a statement, mainly in how to go from the loops $L_i$ to the $\ell_k$ in the previous notations. We do not go into more details here since we will not need the converse statement.
\end{remark}

The last ingredient needed is a bound on the mass of large loops. For general planar graphs satisfying uniform crossing, this was obtained in Corollary 4.22 from \cite{BLRtorus2}.
\begin{lem}\label{loop_mass}
	For all $\epsilon > 0$, there exits $C>0$ and $\delta_0$ such that for all $r > 0$, for all $\delta \leq r \delta_0$, for all $z \in \C$ the following holds. Let $\mathcal{L}(\epsilon, r, z, \delta)$ be the set of loops staying in $B( z, r)^\d$ and with diameter greater that $\epsilon r$, we have
	\[
	\Lambda( \mathcal{L}(\epsilon, r, z, \delta)) \leq C.
	\]
\end{lem}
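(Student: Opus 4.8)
The plan is to bound the \emph{rooted} loop measure of the relevant loops and then transfer: since the two defining properties — staying in $B(z,r)^\d$ and having diameter $\ge \epsilon r$ — do not depend on where the root is placed, the set of \emph{rooted} loops with these properties is exactly the preimage of $\mathcal L(\epsilon,r,z,\delta)$ under the ``forget the root'' map, so its $\Lambda$-mass equals $\Lambda(\mathcal L(\epsilon,r,z,\delta))$. I would then split by the root $v_0\in B(z,r)^\d$. Writing $\rho:=\epsilon r/2$, a loop with these properties rooted at $v_0$ reaches distance $\ge\rho$ from $v_0$ and, since $v_0\in B(z,r)$, is contained in $B:=B(v_0,2r)^\d$. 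Denoting by $(p^B_m)_{m}$ the $m$-step transition probabilities of the discrete-time skeleton of the walk killed on exiting $B$ — its trace coincides with that of the continuous-time walk, so \cref{beurling}, \cref{crossing_conditional} and the ambient geometric estimates all apply to it — the definition of $\Lambda$ gives
\[
\Lambda(\mathcal L)\ \le\ \sum_{v_0\in B(z,r)^\d}\ \sum_{n\ge 1}\frac1n\,\P_{v_0}\big(X_0=X_n=v_0,\ X[0,n]\subset B,\ X\text{ reaches distance}\ge\rho\text{ from }v_0\big).
\]

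Next, for each fixed $v_0$, I would cut a contributing path at the first time $\sigma$ at which it is at distance $\ge\rho$ from $v_0$ — so $1\le\sigma\le n-1$ and hence $\tfrac1n\le\tfrac1{n-\sigma}$ — and apply the Markov property to factor out the return excursion. This yields, with $\Phi(u,v_0):=\sum_{m\ge1}\tfrac1m\,p^B_m(u,v_0)$,
\[
\sum_{n\ge1}\frac1n\,\P_{v_0}(\cdots)\ \le\ \sum_{u:\,|u-v_0|\ge\rho}\P_{v_0}(X_\sigma=u,\ \sigma<T_B)\,\Phi(u,v_0)\ \le\ \sup_{|u-v_0|\ge\rho}\Phi(u,v_0).
\]
By the uniformly bounded density assumption $\#\{v_0\in B(z,r)^\d\}\lesssim M(r/\delta)^2$, so the lemma reduces to the Green's-function estimate
\[
\Phi(u,v_0)\ \le\ C\,\frac{\delta^2}{|u-v_0|^2}\qquad\text{for }u\in B(v_0,2r)^\d,\ |u-v_0|\ge\rho,
\]
with $C$ universal and $\delta\le\delta_0(\epsilon)\,r$: this forces the per-root contribution below $4C\delta^2/(\epsilon r)^2$, and summing over the $\asymp(r/\delta)^2$ roots leaves the constant $\asymp MC/\epsilon^2$.

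To obtain this estimate I would feed in Gaussian heat-kernel bounds $p^B_m(x,y)\le\tfrac{C'}{m}\exp\!\big(-c|x-y|^2/(\delta^2m)\big)$ for the killed walk; then, writing $a=|u-v_0|$ and comparing the series $\sum_m m^{-2}e^{-ca^2/(\delta^2m)}$ with the integral $\int_0^\infty t^{-2}e^{-ca^2/(\delta^2t)}\,dt=\delta^2/(ca^2)$, one gets $\Phi(u,v_0)\le C\delta^2/a^2$ as needed. The geometry here is essentially uniformly elliptic and two-dimensional: the uniform crossing estimate, together with the Harnack inequality it implies (recalled after \cref{crossing_conditional}) and uniformly bounded density, gives volume regularity and a Poincaré/Nash inequality at every scale $\ge\delta$, from which such heat-kernel bounds follow by the standard machinery (Nash's argument for the on-diagonal factor $C'/m$, a Carne–Varopoulos or Davies-type perturbation for the Gaussian factor). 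These are exactly the estimates established for these walks in \cite{BLR16,BLRtorus2}, and this very lemma is quoted from the latter, so in practice I would cite the bound there rather than redo it.

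The main obstacle is precisely this last input: the Gaussian upper bound for the \emph{killed}, possibly \emph{non-reversible} walk from the bare assumptions. For reversible weights it is classical; in general one needs enough regularity to run Davies' method (or simply imports the estimate from the cited work). Let me emphasize that the diffusive lower bound on travel time implicit in it is genuinely essential and cannot be replaced by the trivial ``to travel distance $a$ one needs $\gtrsim a/\delta$ steps'' coming from bounded edge length: without the $\tfrac1m$ weights — equivalently, if the $m$-sum were only forced to start near $a/\delta$ rather than near $(a/\delta)^2$ — the sum over the $\asymp(r/\delta)^2$ candidate roots would diverge. Everything else above (the re-rooting reduction, the root count, and the bookkeeping with $\tfrac1n\le\tfrac1{n-\sigma}$) is routine.
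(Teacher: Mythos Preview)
The paper does not give its own proof of this lemma: it is simply imported as Corollary~4.22 of \cite{BLRtorus2}, so there is nothing to compare on the level of argument. Your sketch is a perfectly standard and sound route to such a bound --- reduce to rooted loops, sum over the $O((r/\delta)^2)$ candidate roots, cut at the first exit from $B(v_0,\rho)$ and use $1/n\le 1/(n-\sigma)$ to land on a Green-type quantity $\sum_m m^{-1}p^B_m(u,v_0)$, then feed in a two-dimensional Gaussian heat-kernel upper bound to get the $\delta^2/|u-v_0|^2$ decay that exactly cancels the root count. The bookkeeping is correct and your honest identification of the genuine content --- the off-diagonal heat-kernel bound for the killed, possibly non-reversible walk under only the uniform-crossing hypothesis --- is accurate; that estimate is what is actually proved in the cited reference, and everything else in your outline is routine. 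One small caution: in the non-reversible setting the quantity you need is $p^B_m(u,v_0)$ rather than $p^B_m(v_0,u)$, and Nash/Carne--Varopoulos type arguments do not automatically give symmetric bounds, so if you were to carry this out in detail you would need the version of the heat-kernel estimate in the right direction (or an argument that the adjoint walk also satisfies uniform crossing). This is handled in \cite{BLRtorus2}, which is why in practice citing that result, as the paper does, is the cleanest option.
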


\subsection{Uniform spanning tree}\label{sec:UST}

We now turn to the main object from this paper, and while we assume that the reader should be familiar with it, we recall briefly the main properties that will be needed in the rest of the paper.

In an oriented setting as ours, a spanning tree of a finite graph $G$ rooted at some vertex $\partial$ is a subset of oriented edges $\cT$ which contains no cycle and such that, in $\cT$, $\partial$ has no outgoing edge and every other vertex has a single outgoing edge in $\cT$. It is easy to see that from any vertex $v \neq \partial$, following the outgoing edges in order defines a simple path from $v$ to $\partial$ which we naturally call the branch of $\cT$ starting in $v$. This branch will in general be denoted $\gamma_v$. In this paper we will always consider graphs of the form $D^\d$ with wired boundary condition and root the tree at the wired vertex.
The uniform spanning tree (UST) is the measure such that
\[
\P( \cT ) \propto \prod_{(vv') \in \cT} q( v \to v').
\]

\begin{remark}
	In the most classical case of an un-oriented graph (or equivalently a reversible Markov chain), forgetting the orientation gives a law on un-oriented trees which is independent of the choice of root. This does not hold with general oriented graphs however.
\end{remark}

It is well known that the UST can be sampled through Wilson's algorithm which we recall rapidly for completeness. Start with an initial trivial tree $\cT_0 = \{\partial\}$ and proceed by induction, constructing subtrees $\cT_i$ as follows. Given $\cT_i$, pick any vertex (not in $\cT_i$ otherwise the step is trivial) and starts a random walk from it until it hits a vertex of $\cT_i$. Erase the loops of this walk (say in forward time) to obtain a simple path and set $\cT_{i+1}$ as the union of this path and $\cT_{i}$.

One of our key tools to understand this sampling procedure and the geometry of the UST in general is Schramm's finiteness theorem \cite{Schramm2000} which intuitively says that after a large but finite (and independent of the underlying mesh size) number of steps in the algorithm, all the macroscopic structure of the tree is already determined and only small local details still need to be sampled. Here we use the following version of that theorem which goes back to \cite{BLR16}, Lemma 4.18.
\begin{lem}\label{thm:finitude}
	Fix $D$ a simply connected bounded domain and $U$ open such that $\bar U \subset D$. For all $\epsilon >0$, there exits $k \in \N$ such that for all $\delta$ small enough, one can find $k$ vertices of $D^\d$ within distance $\epsilon$ of $U$ such that the following holds. Suppose that we start by sampling the branches from $v_1$ up to $v_k$ in Wilson's algorithm and let $\cT_k$ be the corresponding subtree. There is a way to choose points $v_{k+1}, \ldots$ where we still only consider points in within distance $\epsilon$ of $U$ and eventually exhaust all the vertices of $U$ (call the resulting subtree $\cT_U$) so that, except on a event of probability at most $\epsilon$,
	\begin{itemize}
	\item No connected component of $\cT_U \setminus \cT_k$ has diameter more than $\epsilon$,
	\item for all $i \geq k+1$, the random walk started from $v_i$ hits $\cT_{i-1}$ before exiting $B(v_i, \epsilon)$.
\end{itemize}
\end{lem}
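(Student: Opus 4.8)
The plan is to reduce the statement to the original finiteness theorem of Schramm, as adapted to the general-graph setting in \cite{BLR16, Schramm2000}, and then to do the bookkeeping needed to phrase the conclusion in terms of Wilson's algorithm run from a controlled list of starting points. First I would recall the core topological input: in a simply connected domain $D$, the UST branch $\gamma_v$ from a point $v$, together with the already-sampled subtree, has a scaling limit which is a continuous object (a point in the space of finite trees under the Hausdorff metric on the trace), and in particular for any $\eta>0$ there is a finite number $k=k(\eta,U,D)$ such that sampling $k$ branches from a net of points produces a subtree $\cT_k$ whose trace is within Hausdorff distance $\eta$ of the full macroscopic structure of the tree restricted to (a neighbourhood of) $U$, except on an event of probability at most $\eta$. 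This is exactly the content of Schramm's theorem: finitely many branches already ``pin down'' the tree macroscopically, uniformly in the mesh. The key quantitative consequence I want to extract is that, once $\cT_k$ is sampled, every remaining point of $U^\d$ is within distance $\epsilon/2$ (say) of $\cT_k$ with high probability, because otherwise one could exhibit a macroscopic piece of tree not captured by $\cT_k$.

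Next I would choose the points $v_1,\dots,v_k$ to be an $\epsilon$-net of $U$ (refined near $\partial U$ if needed), all lying within distance $\epsilon$ of $U$, and invoke the above to control $\cT_k$. Then for the remaining points $v_{k+1},\dots$ I would order the vertices of $U^\d$ so that each new vertex $v_i$ is chosen adjacent (in the embedding, within distance $\epsilon$) to some already-explored vertex; concretely, one processes $U^\d$ connected-component by connected-component and within each component in a breadth-first manner starting from the net points, so that at every step $v_i$ is within distance $\epsilon$ of $\cT_{i-1}$. For such a choice, on the high-probability event from the previous step the random walk started at $v_i$ only has to travel distance $O(\epsilon)$ to hit $\cT_{i-1}$; using \cref{beurling} (the Beurling estimate) one shows that except on an event of total probability $O(\epsilon)$ summed over the (polynomially many in $\delta^{-1}$, but that is fine after a union bound tuned by Beurling's power law) steps, the walk from $v_i$ indeed hits $\cT_{i-1}$ before leaving $B(v_i,\epsilon)$. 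A small amount of care is needed here: the naive union bound over all $\sim \delta^{-2}$ vertices does not directly close, so one instead groups vertices by dyadic distance-to-$\cT_k$ scales and uses that a vertex at distance $\rho$ from the tree contributes an error $\rho^{\alpha}$-ish, which is summable. Since being within $\epsilon$ of $\cT_{i-1}$ and hitting it quickly forces each newly added branch to have diameter $O(\epsilon)$, the first bullet (no component of $\cT_U\setminus\cT_k$ of diameter more than $\epsilon$, after relabelling $\epsilon$) follows on the same event, and the second bullet is exactly what we just arranged.

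The main obstacle I expect is the uniformity in $\delta$ together with making the two bullets hold \emph{simultaneously} with a single error budget: the number of remaining exploration steps grows with $\delta^{-1}$, so the per-step error must be driven to $0$ fast enough, and this is precisely where one needs the quantitative Beurling-type decay of \cref{beurling} rather than a soft statement, plus the fact (from Schramm's theorem) that after step $k$ the ``reach'' of $\cT_k$ into $U$ is macroscopically dense. A secondary technical point is handling $\partial U$: points of $U^\d$ near $\partial U$ could a priori connect to the boundary of $D$ along a long branch, so the net must be chosen to include enough points near $\partial U$ and one must allow the exploration points $v_i$ to wander into the $\epsilon$-neighbourhood of $U$ (as the statement permits) so that these branches are short. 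Once these are dealt with, everything else is a routine union bound and relabelling of constants, so I would organise the write-up as: (i) quote Schramm/\cite{BLR16} for the existence of $k$ and the density of $\cT_k$; (ii) specify the net and the BFS ordering; (iii) run the dyadic Beurling union bound; (iv) conclude both bullets on the intersection event and rename $\epsilon$.
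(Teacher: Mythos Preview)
First, note that the paper does not give its own proof of this lemma: it is quoted as background and attributed to \cite{BLR16}, Lemma~4.18, itself building on \cite{Schramm2000}. So there is no in-paper argument to compare against; the relevant benchmark is the proof in those references.

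Your overall architecture (a finite net to pin down the skeleton, then an adaptive exhaustion of the remaining vertices) matches that benchmark, and step~(i) really is just a citation. The gap is in step~(iii). With your BFS ordering, each $v_i$ for $i>k$ sits at graph distance $O(\delta)$ from $\cT_{i-1}$, so \cref{beurling} gives a per-vertex failure probability of at most $(C\delta/\epsilon)^\alpha$ for the second bullet. A union bound over the $O(\delta^{-2})$ vertices of $U^\d$ then produces an error of order $\delta^{\alpha-2}\epsilon^{-\alpha}$, which diverges as $\delta\to 0$ unless $\alpha>2$. The Beurling estimate available here only yields \emph{some} $\alpha>0$, and in two dimensions one cannot hope for better than $\alpha=1/2$, so the bound is vacuous. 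Your dyadic fix does not rescue this: once you commit to BFS, the distance from $v_i$ to $\cT_{i-1}$ is $\sim\delta$ for \emph{every} $i$, so there is no scale parameter $\rho$ over which $\sum\rho^\alpha$ could converge to something $\delta$-independent; and if $\rho$ is instead meant to be the distance to the fixed tree $\cT_k$, that quantity is macroscopic and is not what enters the Beurling bound once $\cT_{i-1}\supsetneq\cT_k$.

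The proofs in \cite{Schramm2000} and \cite{BLR16} avoid a per-lattice-point union bound altogether. The first $k$ points are themselves organised as a nested family of nets at geometrically decreasing scales, with only a $\delta$-independent number of points added at each scale, and the smallness of \emph{all} remaining branches is then obtained simultaneously from the density of $\cT_k$ together with the absence of Schramm quasi-loops (the input behind \cref{quasi_loop}), rather than by summing $\delta^{-2}$ Beurling errors. If you want a self-contained write-up you should restructure around that multi-scale and quasi-loop mechanism; a single-scale BFS followed by a union bound cannot be closed with a Beurling exponent below $2$.
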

Essentially, this statement means that in order to sample all the branches starting in $U$ of an UST, it is enough to understand well the behaviour of the first $k$ branches (which unfortunately have to start in a neighbourhood of $U$) and everything else will be made of very small subtrees attached to these branches first few branches. Furthermore and crucially for our purpose, these small subtrees are generated by erasing walks that never went far from $U$ and are therefore likely easy to couple between different domains.

\subsection{Uniformisation and prime ends}\label{sec:domains}

In the future, we want our results to be usable out of the box say for the law of a spanning tree conditioned on one of its branch. By Wilson's algorithm, this just means that we want to be able to apply our results on domains slitted by a rough curve, including discussing winding of curves in such domain for the application to dimers.
For that purpose, we now introduce a few elements on uniformisation of domains with rough boundaries. Unless specified otherwise, all results are found in \cite{Pommerenke1992} chapter 2 and they will only be used in \cref{sec:extensions}.

\begin{defin}
	A closed set $A \subset \C$ is locally connected if for all $\epsilon$, there exists $\eta$ such that for all $x, y \in A$, if $|x-y| \leq \eta$ then there exists a connected subset $B$ of $A$ such that $x, y \in B$ and $\diam(B) \le \epsilon$.
\end{defin}

\begin{defin}
	Let $D$ be a domain, the diameter distance $d_D$ on $D$ is defined as
	\[
	d_D( x, y) = \inf_{\text{$\gamma$ connecting $x$ to $y$}} \operatorname{diam}(\gamma),
	\]
	where in the above infimum $\gamma$ must be a continuous path in $D$.
	the function $d_D$ is a distance on $D$ and we let $\overline{D}_d$ denote the completion of $D$ for this distance and $\partial_d D = \overline{D}_d \setminus D$.
\end{defin}
As an aside, note that one could also define a natural intrinsic distance using the length of curves instead of their diameter. A moment of though shows however that the length distance can be much finer than the diameter one close to a rough boundary and for our purpose we really need to consider the diameter.

\begin{thm}\label{thm:pomerenke}
	Let $D$ be a bounded simply connected domain, and let $f$ denote any conformal map mapping $\D$ to $D$. The following five conditions are equivalent
	\begin{itemize}
		\item $f$ has a continuous extension to $\overline{\D}$,
		\item $f$ extends to an homeomorphism from $\overline{\D}$ to $\overline{D}_d$.
		\item $\partial D$ can be parametrised as a continuous loop, i.e there is a continuous $\gamma$ from the unit circle to $\C$ such that $\partial G = \gamma( \mathbb{S}_1)$,
		\item $\partial D$ is locally connected,
		\item $\C \setminus D$ is locally connected. 
	\end{itemize}
\end{thm}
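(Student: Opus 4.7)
The plan is to prove the five-way equivalence by combining standard topological facts with the classical Carath\'eodory extension theorem. I would organise the argument as $(3) \Leftrightarrow (4) \Leftrightarrow (5)$ (topology), $(4) \Rightarrow (1) \Rightarrow (3)$ (where $(4) \Rightarrow (1)$ is the analytic heart), and finally $(1) \Rightarrow (2)$.

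For the topological equivalences, I would use the Hahn--Mazurkiewicz theorem for $(3) \Leftrightarrow (4)$: a non-empty compact metrisable space is a continuous image of the circle if and only if it is connected and locally connected, and $\partial D$ is automatically compact and connected for bounded simply connected $D$. For $(4) \Leftrightarrow (5)$, one uses that $\C \setminus D$ is connected (by simple connectedness of $D$), so that small connected sets in $\partial D$ extend to small connected sets in $\C \setminus D$ by attaching a short arc reaching out, and conversely small connected neighbourhoods of $\partial D$ inside $\C \setminus D$ intersect $\partial D$ in connected neighbourhoods. The implication $(1) \Rightarrow (3)$ is immediate: the restriction of the continuous extension to $\partial \D$ is a continuous loop whose image is exactly $\partial D$.

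The analytic core is $(4) \Rightarrow (1)$, the classical Carath\'eodory extension theorem. Since $D$ is bounded, $\int_\D |f'|^2 = \operatorname{area}(D) < \infty$. Fix $\zeta \in \partial \D$ and $\delta > 0$. By Cauchy--Schwarz, the length $L(r)$ of the image of the circular crosscut $\{z \in \D : |z-\zeta|=r\}$ satisfies $L(r)^2 \leq 2\pi r \int_0^{2\pi} |f'(\zeta+r e^{i\theta})|^2 r \, d\theta$. Integrating $L(r)^2/r$ over $r \in [\delta, \sqrt{\delta}]$ bounds this by $2\pi \operatorname{area}(D)$, so averaging yields some $r$ in this range with $L(r) = O(1/\sqrt{\log(1/\delta)})$. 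Thus for arbitrarily small $\eta$ one obtains a crosscut $C$ at $\zeta$ with $\diam(f(C)) \leq \eta$, cutting off a region $V \subset \D$ accumulating at $\zeta$. Under hypothesis $(4)$, $\diam(f(V))$ is then controlled by $\diam(f(C))$: the cluster set of $f$ on the boundary arc bordering $V$ is a connected subset of $\partial D$, and uniform local connectedness on the compact metric space $\partial D$ forces this cluster set to be small. Consequently $\diam(f(V)) \to 0$ as $\eta \to 0$, which gives continuity of the extension at $\zeta$.

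Finally, for $(1) \Rightarrow (2)$, continuity of $\tilde f$ as a map $\overline{\D} \to \overline{D}_d$ follows because the crosscut construction gives $f(V)$ of small $d_D$-diameter (any curve inside $f(V)$ has Euclidean and hence diameter-distance length bounded by $\diam f(V)$). Injectivity on $\partial \D$ requires a separating argument: given $\zeta_1 \neq \zeta_2$, any continuous path in $D$ joining $d_D$-neighbourhoods of $\tilde f(\zeta_1)$ and $\tilde f(\zeta_2)$ must cross every crosscut separating $\zeta_1$ from $\zeta_2$, and one can arrange such crosscuts to have images of uniformly positive diameter, giving $\tilde f(\zeta_1) \neq \tilde f(\zeta_2)$ in $\overline{D}_d$. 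Continuity, injectivity, and compactness of $\overline{\D}$ then upgrade the continuous bijection to a homeomorphism. The main obstacle throughout is the implication $(4) \Rightarrow (1)$: converting the analytic length bound on crosscut images into a diameter bound on the cut-off region relies on the delicate interplay between cluster sets of conformal maps and local connectedness of the boundary.
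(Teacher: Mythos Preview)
The paper does not actually prove this theorem: it is stated as background and attributed to Pommerenke's book (Chapter~2) for the equivalence of the first, third, fourth and fifth items, and to Herron for the second item. So there is no proof in the paper to compare your proposal against.

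That said, your sketch is essentially the standard route found in those references and is sound in outline. The length--area argument you give for $(4)\Rightarrow(1)$ is exactly the classical Wolff--Carath\'eodory proof, and the scheme $(3)\Leftrightarrow(4)$ via Hahn--Mazurkiewicz, $(4)\Leftrightarrow(5)$ via elementary topology of the complement, and $(1)\Rightarrow(3)$ by restricting the extension to $\partial\D$ is the usual one. A couple of places would need more care in a complete write-up: your argument for $(4)\Leftrightarrow(5)$ is quite informal (one typically uses that $\C\setminus D$ is a Janiszewski-type continuum and argues via uniform local connectedness on compact sets), and the injectivity step in $(1)\Rightarrow(2)$---showing that distinct points of $\partial\D$ map to distinct points of $\overline{D}_d$---is precisely the content of the reference to Herron and deserves a fuller treatment than the crosscut-separation hint you give. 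But nothing is wrong; you have correctly identified $(4)\Rightarrow(1)$ as the analytic heart and the rest as topology.
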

In particular we can always assume in the third line that the parametrisation of the boundary is of the form $\gamma( e^{i \theta}) = f( e^{i \theta})$ where $f$ is conformal. The second statement in this theorem is not given in \cite{Pommerenke1992} but can be found in \cite{Herron2012}.

In this context, we will call an element of $\partial_d D$ a prime end. This is a slight abuse of terminology since the true notion of prime end is more general, we refer the interested reader to \cite{Pommerenke1992} for the actual definition and the fact that it matches our usage in the locally connected case. 

Clearly the diameter distance on $D$ is finer that the restriction to $D$ of the usual euclidean distance and therefore there is a canonical onto map from $\partial_d D$ to $\partial D$, i.e each prime end is associated to a single boundary point but several prime end can share the same image (for examples the two sides of a slit are different prime ends). It is also easy to see using uniform continuity that if $\gamma$ is a continuous path from $[0, 1]$ to $\C$ such that $\gamma[0, 1) \subset D$ then $\gamma$ can be extended to a continuous path from $[0,1]$ to $\overline{D}_d$. In particular any such curve ending on $\partial D$ determines a unique prime end which will be important later in the paper.

Finally, let us introduce a notion of distance between domains with a marked prime end that will be useful in \cref{sec:extensions}.
\begin{defin}\label{def:distance_domain}
	Let $A$, $B$ be two bounded simply connected domains with locally connected boundaries and let $x \in \partial_d A$ and $x \in \partial_d B$ be two prime ends. For $\epsilon>0$, we say that the marked domains $(A, x)$ and $(B, y)$ are $\epsilon$-close if there exists an homomorphism $f : \overline{A}_d \to \overline{B}_d$ such that $f(x) = y$, for all $a \in A$, $|f(a)-a| \leq \epsilon$ and for all $b \in B, |f^{-1}(b) -b| \leq \epsilon$.
\end{defin}
Let us note that we will not actually need to show that the above definition defines a distance. Thanks to \cref{thm:pomerenke}, we have a simple (and trivial) criterion in terms on uniformisation maps for two marked domains to be close.
\begin{lem}\label{lem:uniformisation_criterion}
	Let $A, B, x, y$ be as above and let $f : \D \to A$ and $g : \D \to B$ be two uniformisation maps such that $f(1) = x$ and $g(1) = y$. If $\norm{f-g}_{L^\infty ( \overline \D)} \leq \epsilon$ then $(A, x)$ and $(B, y)$ are $\epsilon$-close. 
\end{lem}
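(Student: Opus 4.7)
The plan is essentially to use the extended homeomorphisms guaranteed by \cref{thm:pomerenke} and transfer the uniform closeness of $f$ and $g$ to a closeness between the domains via a direct change of variable.

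First, I invoke the second bullet of \cref{thm:pomerenke}: since $A$ and $B$ have locally connected boundaries (this is implicit in the very fact that the prime ends $x$ and $y$ are well defined), the uniformisation maps $f$ and $g$ extend to homeomorphisms $\overline{f}:\overline{\D}\to\overline{A}_d$ and $\overline{g}:\overline{\D}\to\overline{B}_d$. I use the same letters $f,g$ for the extensions. I then set
\[
h := g \circ f^{-1} : \overline{A}_d \longrightarrow \overline{B}_d,
\]
which is a composition of homeomorphisms, hence a homeomorphism. By construction $h(x) = g(f^{-1}(x)) = g(1) = y$, so $h$ already satisfies the first two conditions of \cref{def:distance_domain}.

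It remains to verify the two pointwise bounds. For $a \in A$, let $z := f^{-1}(a) \in \D$. Then $f(z) = a$ and $g(z) = h(a)$, so
\[
|h(a) - a| = |g(z) - f(z)| \leq \|f - g\|_{L^\infty(\overline{\D})} \leq \epsilon.
\]
Symmetrically, for $b \in B$ let $w := g^{-1}(b) \in \D$, giving
\[
|h^{-1}(b) - b| = |f(w) - g(w)| \leq \epsilon.
\]
This is exactly the condition in \cref{def:distance_domain}, so $(A,x)$ and $(B,y)$ are $\epsilon$-close.

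There is no real obstacle here: the entire content of the lemma is the hypothesis that $\overline{f}$ and $\overline{g}$ are homeomorphisms between the closed disk and the boundary completions, which is precisely what \cref{thm:pomerenke} provides. The only point worth a moment of care is to make sure the sup-norm hypothesis on $\overline{\D}$ really transfers to boundary points as well, but since the extensions $\overline{f},\overline{g}$ are continuous on the closed disk, the bound $|\overline{f}(z)-\overline{g}(z)|\leq\epsilon$ automatically holds for every $z\in\overline{\D}$ by continuity, including at $z=1$ where it gives $|x-y|\leq\epsilon$ (consistent with $h(x)=y$).
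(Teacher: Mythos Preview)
Your proof is correct and is exactly the (trivial) argument the paper has in mind: the paper does not write out a proof at all, merely introducing the lemma as ``a simple (and trivial) criterion'' obtained from \cref{thm:pomerenke}. Your construction $h=g\circ f^{-1}$ and the direct change-of-variable verification of the two pointwise bounds is precisely the intended one-line justification.
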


As shown in \cref{fig:slitted_domains}, the notion of distance from \cref{def:distance_domain} cannot be simply rewritten in terms of Caratheodory's topology or in terms of the distance between the boundary seen as curve so we unfortunately cannot provide a simple direct geometric description of it. However we can still obtain the following non-uniform continuity statement
\begin{figure}[ht]
	\begin{center}
		\includegraphics[width=.7\textwidth]{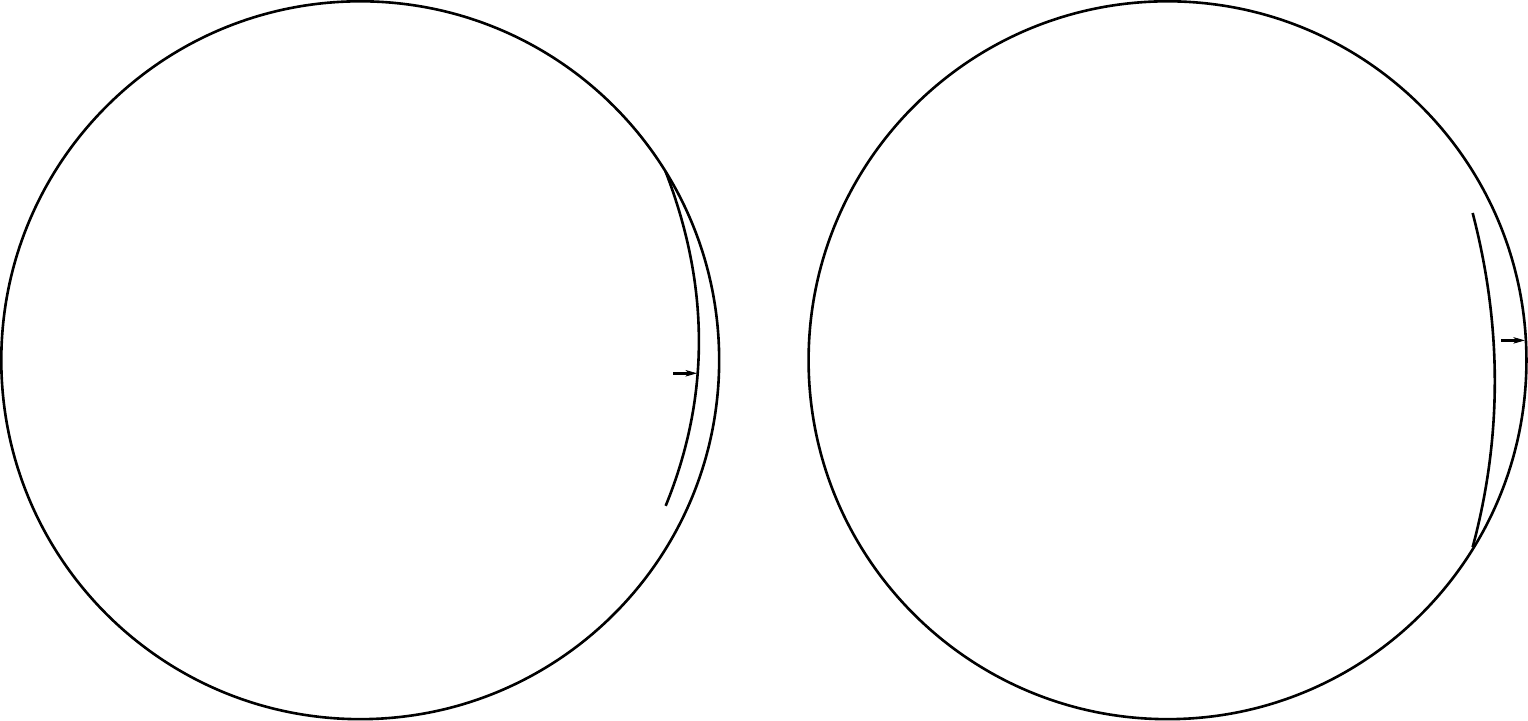}
		\caption{Two slitted versions of the unit disc with a marked prime end denoted by an arrow. Note that both domains are close to the regular unit disc for Caratheodory's topology with respect to the center of the disc. Also, when seen as curve parametrised from the marked prime end, the boundary of these domains are close. Yet the two marked domains are \emph{not} close in the sense of \cref{def:distance_domain}}\label{fig:slitted_domains}
	\end{center}
\end{figure}
\begin{lem}\label{slit_continuity}
	Fix $A$ a bounded simply connected domain with locally connected boundary and $x \in \partial_d A$ a prime end. For every $\epsilon >0$, there exists $\eta > 0$ such that for all bounded simply connected domain with locally connected boundary $B$ and prime end $y \in \partial_d B$ the following holds. Let $\gamma_A$ and $\gamma_B$ be two parametrisation of $\partial A$ and $\partial B$ as close curves starting from $x$ and $y$ respectively. If the uniform distance up to reparametrisation between $\gamma_A$ and $\gamma_B$ is smaller than $\eta$ then $(A, x)$ and $B(y)$ are $\epsilon$-close.
\end{lem}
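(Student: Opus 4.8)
The plan is to reduce the statement, via \cref{lem:uniformisation_criterion}, to a stability property of uniformisation maps under perturbation of the boundary, and then to establish that property by a compactness argument whose only non-routine ingredient is an \emph{a priori} modulus of continuity for the conformal parametrisation of domains whose boundary lies close to the fixed set $\partial A$.

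By \cref{thm:pomerenke}, since $\partial A$ and $\partial B$ are locally connected, we may choose uniformisation maps $f : \D \to A$ and $g : \D \to B$ with $f(1) = x$ and $g(1) = y$, both extending to homeomorphisms of $\overline{\D}$ onto $\overline{A}_d$ and $\overline{B}_d$ respectively; we use the two remaining automorphism degrees of freedom to impose furthermore $f(0)=g(0)=b_0$ for a fixed $b_0 \in A$ (note $b_0 \in B$ whenever $d(\gamma_A,\gamma_B)$ is small, the only case we need). Since the distance up to reparametrisation is unchanged if either curve is reparametrised, we may assume $\gamma_A(t) = f(e^{2\pi i t})$ and $\gamma_B(t) = g(e^{2\pi i t})$. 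By \cref{lem:uniformisation_criterion} it then suffices to show that, given $\epsilon$, one can choose $\eta$ so that $d(\gamma_A, \gamma_B) \le \eta$ forces $\|f - g\|_{L^\infty(\overline{\D})} \le \epsilon$. As $f - g$ is holomorphic on $\D$ and continuous on $\overline{\D}$, the maximum modulus principle gives $\|f-g\|_{L^\infty(\overline{\D})} = \|f-g\|_{L^\infty(\partial\D)}$, so it is enough to bound $\sup_t |f(e^{2\pi i t}) - g(e^{2\pi i t})|$: in other words, we must show that if the conformal boundary parametrisations of $A$ and $B$ are close up to reparametrisation and with matched marked points, then the matching reparametrisation is close to the identity. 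This is a Radó-type continuity statement in which, as \cref{fig:slitted_domains} illustrates, the fixedness of $A$ is essential; one could deduce it from the classical Radó theorem \cite{Pommerenke1992} after approximating $A$ by Jordan domains, but we sketch a direct argument.

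Suppose the claim fails: there are $\epsilon_0 > 0$ and domains $B_n$ with prime ends $y_n$, with associated uniformisations $g_n$ normalised as above, such that $d(\gamma_A, \gamma_{B_n}) \le \eta_n \to 0$ while $\sup_t |f(e^{2\pi i t}) - g_n(e^{2\pi i t})| \ge \epsilon_0$. From $d(\gamma_A, \gamma_{B_n}) \le \eta_n$ one reads off that $\partial B_n \to \partial A$ in the Hausdorff distance, so the $B_n$ stay in a fixed bounded set, the $g_n$ are uniformly bounded, and (by Koebe's theorem, since $\dist(b_0,\partial B_n)$ is bounded below) $|g_n'(0)|$ is bounded below. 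The heart of the matter is the \emph{a priori} estimate: for every $\epsilon' > 0$ there exist $\delta' > 0$ and $n_0$ such that for $n \ge n_0$ and every arc $I \subset \partial \D$ with $|I| \le \delta'$ one has $\diam(g_n(I)) \le \epsilon'$ --- a modulus of continuity for the conformal parametrisation of $B_n$ depending only on $A$. Granting it, the family $\{g_n|_{\overline{\D}}\}$ is equicontinuous and bounded, so a subsequence converges uniformly on $\overline{\D}$ to some $g_\infty$, continuous on $\overline{\D}$, holomorphic and (by Hurwitz, using the lower bound on $|g_n'(0)|$) univalent on $\D$; the estimate also forces $g_\infty(\partial\D) \subseteq \partial A$, and together with $g_\infty(0) = b_0 \in A$ and the fact that the curve-closeness hypothesis ensures both sides of any slit of $\partial A$ appear in $g_\infty(\partial\D)$, a standard argument identifies $g_\infty(\D) = A$, so that $g_\infty$ is a uniformisation of $A$ with the same normalisation as $f$, whence $g_\infty = f$. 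This contradicts $\sup_t |f - g_n| \ge \epsilon_0$ along the subsequence and proves the lemma modulo the a priori estimate.

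The a priori estimate is the main obstacle. By conformal invariance of extremal length it is equivalent to ruling out, in $B_n$, topological annuli of arbitrarily large modulus separating a boundary arc of diameter $\ge \epsilon'$ from the bulk of $B_n$. Since $\partial B_n$ lies within $\eta_n \to 0$ of the locally connected continuum $\partial A$, a point near the far end of such a boundary arc and a point in the bulk can be joined inside $A$ --- using local connectedness of $\partial A$ together with simple connectedness --- by a path of controlled diameter that stays a definite distance away from $\partial A$; an $\eta_n$-perturbation of this path is a curve in $B_n$ crossing the would-be thin annulus, which bounds its modulus and yields a contradiction. This is a continuum Beurling-type inequality for $\partial A$ transported to domains Hausdorff-close to it; consistently with \cref{fig:slitted_domains}, no such estimate can be uniform over $A$, which is exactly why the lemma is only a non-uniform continuity statement.
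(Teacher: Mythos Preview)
Your overall route is genuinely different from the paper's and the strategy is sound: both arguments reduce via \cref{lem:uniformisation_criterion} and the maximum principle to controlling $\sup_t |f(e^{2\pi i t}) - g(e^{2\pi i t})|$, but you proceed by contradiction and compactness (a Rad\'o-type argument requiring equicontinuity of the $g_n$ up to the boundary), whereas the paper argues directly. The paper fixes $z\in A$, runs a Brownian motion from $z$, and uses Beurling's estimate together with the uniform continuity of $f^{-1}$ on $\overline{A}_d$ (coming from local connectedness of $\partial A$) to show that for every $s$ the harmonic measure in $B$ of $\gamma_B[0,s]$ from $z$ is within an explicit $o_\eta(1)$ of $s$. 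This immediately gives that the harmonic-measure reparametrisation $\tilde\gamma_B$ of $\gamma_B$ --- which is exactly the boundary trace of $g$ --- is uniformly close to $\gamma_A$, and the maximum principle finishes. No compactness or subsequence is needed.

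The gap in your plan is precisely the step you flag as ``the main obstacle'': the a~priori equicontinuity estimate. Your sketch --- produce a curve in $A$ of controlled diameter from near the far end of $J$ to the bulk, perturb it into $B_n$, and conclude that it ``bounds the modulus'' of the thin separating annulus --- does not work as written. A crossing curve of bounded diameter (or even bounded length) gives a \emph{lower} bound on the extremal length of the separating family, not the upper bound you need; and the estimate you are after is genuinely false uniformly over simply connected domains (a long thin rectangle has a short side of diameter $1$ and exponentially small harmonic measure), so the proximity of $\partial B_n$ to the \emph{fixed} locally connected set $\partial A$ must enter in a sharper way than through a single crossing curve. A correct proof of your equicontinuity estimate amounts to showing that connected boundary arcs of $B_n$ of diameter $\ge\epsilon'$ have harmonic measure from $b_0$ bounded below by a constant depending only on $A$ --- and establishing that is essentially the paper's Beurling-plus-uniform-continuity-of-$f$ computation. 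Once you carry that out, the compactness wrapper becomes redundant: you will have directly shown that the conformal parametrisations differ by a reparametrisation close to the identity, which is exactly the paper's conclusion.
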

In other word, every pair $(A, x)$ is a continuity point for the map between the distance on boundary curves to the one from \cref{def:distance_domain}. Let us also note that the arbitrary nature of the choice of parametrisation for $\gamma_A$ and $\gamma_B$ above is irrelevant with our choice of distance.
\begin{proof}
	Fix $A, B, x, y, \gamma_A, \gamma_B$ as above together with some $z \in A$. Since as noted above the choice of $\gamma_A$ is irrelevant, we can assume without loss of generality that $\gamma_A : [0, 2\pi] \to \partial A$ is obtained from the map conformal map $f : \D \to A$ sending $0$ to $z$ and $1$ to $x$ and that $\gamma_B$ is also defined on $[0, 2\pi]$ with $\norm{\gamma_A - \gamma_B}_\infty \leq \eta$ for some $\eta$ to be taken small enough later. In fact let us already assume that $\eta < d( z, \partial A)$ and note that this imply that $z \in B$. Finally let $\harm_A (\gamma_A[0, s])$ denote the harmonic measure in $A$ seen from $z$ of $\gamma_A[0, s]$ seen as a set of prime end, with a similar notation for $B$.
	
	Consider a planar Brownian motion $B_t$ started from $z$ and let $\tau_A$ (resp. $\tau_B$) be its exit time from $A$ (resp. $B$). Also let $T = \inf \{ t > 0 : d( B_t, \partial A) \leq 2 \eta\}$. Note that by Beurling's estimate (\cref{beurling}) there exits $\alpha > 0$ (independent of $A$, $B$) such that $\P( d_A(B_T - B_{\tau_A}) \geq \sqrt{\eta}) \leq \eta^\alpha$ and similarly for $B$. In particular for all $s > 0$, we have
	\[
	\P( d_A ( X_T , \gamma_A[0, s]) \geq \sqrt{\eta}) \leq \P( X_\tau \not \in \gamma_A[0, s]) + \eta^\alpha \leq 1 - s + \eta^\alpha.
	\]
	On the other hand, by \cref{thm:pomerenke} $\gamma^{-1}_A$ seen as a curve from $\overline{A}_d$ to $\overline{B}$ is continuous but therefore also uniformly continuous. In particular we can find $\beta > 0$ (depending only on $A$ and going to $0$ with $\eta$) such that for all $s, s'$ with $|s-s'| \geq \beta$, we have $d_A(\gamma_A(s), \gamma_A(s') )  \geq 2\sqrt{\eta}$. Again using Beurling's estimate, we obtain that for all $s \geq 2 \beta$
	\[
	\P( d_A(X_T , \gamma_A[\beta, s-\beta])  \leq \sqrt{\eta} ) \leq  \P( X_\tau \in \gamma_A[0, s]) + \eta^\alpha \leq s - \eta^\alpha.
	\]
	Combining the two above estimates, we see that for all $s \geq 2 \beta$,
	\[
	s - \eta^\alpha \leq \P( d_A ( X_T , \gamma_A[0, s]) \leq \sqrt{\eta}) \leq s + 2 \beta + \eta^\alpha.
	\]
	
	Let us now turn to estimates related to $B$. Suppose that $d_A ( X_T, \gamma_A[\beta, s-\beta] \leq \sqrt{\eta}$, then by definition of $\beta$, $d_{eucl}( X_T, \partial A \setminus \gamma_A[0, s] ) \geq \sqrt{\eta}$ but then we also have $d_{eucl}( X_T, \partial B \setminus \gamma_B[0, s] ) \geq \sqrt{\eta} - \eta$ and by Beurling
	\[
	\P( X_{\tau_B} \in \gamma_B[0, s] | d_A ( X_T, \gamma_A[\beta, s-\beta] \leq \sqrt{\eta} ) \geq 1 - 2 \eta^\alpha,
	\]
	and therefore for all $s$,
	\[
	\harm_B (\gamma_B[0, s]) \geq (1 - 2 \eta^\alpha) (s - 2\beta - \eta^\alpha).
	\]
	Since the bounds are arbitrary, we have a corresponding upper bound by considering the complementary event.
	
	To conclude we will apply \cref{lem:uniformisation_criterion}. To that end, let $\tilde \gamma_B$ be the reparametrisation of $\gamma_B$ by harmonic measure, i.e such that $\harm_B (\tilde \gamma_B[0, s]) = s$ for all $s$. Note that this implies that $\tilde \gamma_B$ is the boundary extension of the conformal map $g : \D \to B$ sending $0$ to $z$ and $1$ to $y$.
	 On the other hand, combining the uniform continuity of $\gamma_A$ and the previous bound, it is easy to see that we can find $\epsilon$ independent of $B$ and going to $0$ with $\eta$ such that $\norm{\gamma_A - \tilde \gamma_B} \leq \epsilon$. By the maximum principle, this shows that $|f - g| \leq \epsilon$ on $\overline{\D}$ which concludes by \cref{lem:uniformisation_criterion}.
\end{proof}

\subsection{Dimers, winding, and height function}\label{sec:winding}

In this section we recall some simple facts about winding of open curves and then present the connection between uniform spanning tree and dimer model. We use the terminology from \cite{BLRtorus2}.

\begin{defin}
	Let $\gamma : [0, 1] \to \C$ be a continuous curve. For $z \not \in \gamma( [0, 1])$, we define the (topological) winding of $\gamma$ around $z$ to be 
	\[
	W( \gamma, z) = \arg( \gamma(1) - z) - \arg( \gamma(0) - z),
	\]
	where the argument is taken so that it is continuous along $\gamma$. If $\gamma$ admits a tangent at its starting, we set $	W( \gamma, \gamma(0)) = \arg( \gamma(1) - \gamma(0)) - \lim_{\epsilon \to 0}\arg( \gamma(\epsilon) - \gamma(0))$ and similarly for the ending point.
\end{defin}

For smooth curve, there is also a natural ```intrinsic'' definition of winding that does not need a reference point since one can track the argument of the derivative. It is related to our definition though the following proposition:
\begin{prop}
	Let $\gamma : [0, 1] \to \C$ be a smooth injective curve such that $\gamma'(t) \neq 0$ for all $t \in [0, 1]$. We have
	\[
	W( \gamma, \gamma(0)) + W( \gamma, \gamma(1)) = \arg( \gamma'(1)) - \arg( \gamma'(0)),
	\]
	where the arguments on the right hand side are continuous along $\gamma'$.
\end{prop}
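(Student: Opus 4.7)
The plan is to package both windings into a single continuous lift on the closed triangle $\bar\Delta = \{(s, t) \in [0,1]^2 : s \leq t\}$. Define $F : \bar\Delta \to \C$ by $F(s, t) = (\gamma(t) - \gamma(s))/(t - s)$ for $s < t$ and $F(t, t) = \gamma'(t)$. By the hypotheses, $F$ is continuous on $\bar\Delta$ (for example via $F(s, t) = \frac{1}{t-s}\int_s^t \gamma'(u)\,du$) and non-vanishing: injectivity of $\gamma$ handles $s < t$ and the assumption $\gamma'(t) \neq 0$ handles the diagonal. Since $\bar\Delta$ is simply connected, $\arg F$ admits a continuous real-valued lift $\phi : \bar\Delta \to \R$.

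The key observation is that this single $\phi$ simultaneously lifts both quantities we need: on the diagonal, $t \mapsto \phi(t, t)$ is a continuous lift of $\arg(\gamma'(t))$, while away from the diagonal the identity $\gamma(t) - \gamma(s) = (t-s)\,F(s, t)$ combined with $t - s > 0$ shows that $\phi(s, t)$ is a continuous lift of $\arg(\gamma(t) - \gamma(s))$.

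I now rewrite each winding using $\phi$. Directly from the definition,
$$W(\gamma, \gamma(0)) = \phi(0, 1) - \lim_{\epsilon \to 0^+} \phi(0, \epsilon) = \phi(0, 1) - \phi(0, 0).$$
For the other endpoint, $\gamma(t) - \gamma(1) = -(\gamma(1) - \gamma(t))$ shows that $t \mapsto \phi(t, 1) + \pi$ is a continuous lift of $\arg(\gamma(t) - \gamma(1))$ on $[0, 1)$, hence
$$W(\gamma, \gamma(1)) = \lim_{\epsilon \to 0^+} \bigl(\phi(1 - \epsilon, 1) + \pi\bigr) - \bigl(\phi(0, 1) + \pi\bigr) = \phi(1, 1) - \phi(0, 1).$$
Summing, the $\pm \phi(0, 1)$ terms cancel and we are left with $\phi(1, 1) - \phi(0, 0) = \arg(\gamma'(1)) - \arg(\gamma'(0))$ in the lift produced by $\phi$. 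Any other continuous lift along $\gamma'$ differs from this one by a global additive constant, which drops out of the difference.

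The only delicate point is the construction of $F$: the fact that the normalising factor $t - s$ is \emph{positive} is exactly what prevents a spurious $\pi$ from appearing and allows one lift to do both jobs at once. Once $F$ is in hand, non-vanishing of $\gamma'$ gives continuity up to the diagonal and the rest is pure bookkeeping.
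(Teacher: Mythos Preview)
Your proof is correct. The secant map $F(s,t)=(\gamma(t)-\gamma(s))/(t-s)$ extended by $\gamma'$ on the diagonal is the standard device here (it is essentially the argument behind Hopf's Umlaufsatz), and your bookkeeping with the lift $\phi$ is clean: the key points---continuity of $F$ via $F(s,t)=\int_0^1\gamma'(s+u(t-s))\,du$, non-vanishing from injectivity and $\gamma'\neq 0$, and the fact that the positive factor $t-s$ lets one lift serve both for $\arg(\gamma(t)-\gamma(s))$ and $\arg\gamma'$---are all handled correctly, including the $+\pi$ shift when passing from $\gamma(t)-\gamma(1)$ to $F(t,1)$.

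As for comparison: the paper states this proposition without proof, treating it as a known fact, so there is nothing to compare against. Your argument would serve perfectly well as the omitted proof.
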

Thanks to this proposition, we can generalise the intrinsic notion of winding to any simple curve which is regular close to its endpoints even if it is globally very rough.  Also since $W( \gamma, z)$ is a topological quantity, it is not hard to see that the formula also holds for limits of simple curves, i.e we can think of $W( \gamma, \gamma(0)) + W( \gamma, \gamma(1))$ as an intrinsic winding for any non self-crossing curve.

Let us now describe the first map from a spanning tree to a dimer configuration in the context of the discretization procedure of \cref{sec:domains}, see \cite{Kenyon1} for the original description of that map and \cite{BLRtorus2} for a detailed analysis of the height function in terms of winding.

We start with a simply connected bounded open set $D$. Fix a \emph{reference prime end} of $D$, associated to a point $x_{\r}$ of $\partial D$ and a simple path $\gamma_{\r}$ starting from $x_{\r}$ and going to infinity. For simplicity, let us assume that $\gamma_{\r}$ only intersects $\overline{D}$ at $x_{\r}$ (in particular $x_{\r}$ this imply that $x_{\r}$ was not chosen on a slit). The interested reader can check that we could drop this assumption by interpreting $\gamma_{\r}$ correctly as a path intersecting but not crossing $\partial D$. Also let us assume that $\gamma_{\r}$ merges with the positive real axis at some point and goes to infinity along the positive real axis. It will be clear from the construction later that we do not lose generality assuming this and it will be convenient when comparing different domains later.

Recall that we see $D^\d$ as a wired graph where every edge of $G^\d$ intersecting $\partial D$ was replaced by an edge directed to a single boundary vertex. We now give a slightly different interpretation of the same graph as follows. For each edge (which remember is always oriented) intersecting $\partial D$, we look at the path from the starting point up to the first intersection with $\partial D$. As noted after \cref{thm:pomerenke} we can see these paths as ending at prime ends so we add a single vertex for each of those prime end.

 Since $\partial D$ is locally connected, we can see it as a continuous curve starting and ending at the reference prime end. This produces an ordering of all the extra vertices added in the previous step starting just after the marked prime end and we add an oriented edge from each prime end to the next one, following $\partial D$, adding if necessary an extra vertex at the marked prime end. By design, this construction is equivalent in terms of random walk to the original one : we have simply replaced the single boundary point by a path following the boundary with only deterministic transitions until it reaches $x_\r$.

Now consider the planar dual of $D^\d$ and note that according to the above construction it is natural to have a dual vertex for each face touching the boundary. We embed the dual such that each dual vertex is in the corresponding face, dual pairs of edges intersect exactly once and other pairs of edges do not intersect away from their endpoints. The superposition graph of $G^\d$ and its dual is defined as the graph whose vertex set is the union of all primal vertices, dual vertices, and intersection points between primal and dual edges and whose edge set is the set of ``half edges'' connecting a primal/dual vertex to an intersection vertex. We see it as a weighted graph by giving weight 1 to all half dual edges and weight $p(x \to y)$ to the half edge from a primal vertex $x$ to the intersection along the edge $x \to 1$. We also call reference dual vertex the vertex adjacent to the part of the boundary just after the reference prime end.

\begin{thm}\label{thm:bijection}
	There is a measure preserving bijection between the uniform spanning tree measure on $D^\d$ and the dimer measure on the superposition graph with the boundary primal vertices and the reference dual vertex removed.
\end{thm}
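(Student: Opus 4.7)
The approach is to construct the Temperley--Kenyon bijection explicitly, as in \cite{Kenyon1}, adapted to the prime-end boundary description of this section, and then verify that it is weight preserving up to a common normalization. Given a spanning tree $\cT$ of $D^\d$, each non-boundary primal vertex $v$ has a unique outgoing edge $v \to y$ in $\cT$; assign to it the dimer covering the half-primal edge from $v$ to the midpoint of $v \to y$. This accounts for every non-boundary primal vertex and uses a subset of the intersection midpoints. To cover the remaining midpoints and the non-reference dual vertices, the key planarity observation is that the primal edges \emph{not} in $\cT$, together with the boundary path, form the complement of a primal spanning tree, whose planar dual is a spanning tree of the dual graph rooted at the reference dual vertex. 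This dual tree assigns each non-reference dual vertex a unique outgoing dual edge and hence a unique half-dual edge to cover.

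To check weights, note that every half-dual edge has weight $1$, so the dual portion of the matching contributes a factor $1$. The half-primal edge from $v$ to the midpoint of $v \to y$ carries weight $p(v \to y)$, which coincides with the normalized transition rate $q(v \to y)$. The total weight of the dimer configuration produced from $\cT$ is therefore $\prod_{v \neq \partial} q(v \to y_v)$, which is exactly the weight of $\cT$ under the UST measure. Since this holds configuration by configuration and the map is bijective, the partition functions agree and the two probability laws coincide.

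The principal obstacle is to verify that the above assignment is genuinely a bijection. Injectivity is immediate. For surjectivity one reverses the construction: starting from a dimer configuration $M$, each non-boundary primal vertex $v$ is matched to a unique intersection midpoint, defining a candidate outgoing edge, and one must check that the resulting oriented subgraph is acyclic and hence a spanning tree. This is the standard planarity argument underpinning the Temperley--Kenyon bijection: a putative primal cycle would enclose a subdomain whose interior dual vertices and interior intersection midpoints cannot be perfectly matched by dimers unless exactly one dual vertex has been removed from that interior, and the reference dual vertex is placed precisely so that this can never happen for a cycle composed of non-boundary edges. The only mildly novel point is to check that the prime-end boundary path and the slit $\gamma_{\r}$ introduced in \cref{sec:domains} play exactly the role of the ``Temperleyan slit'' from the classical setup, which follows directly from their definitions and does not require any new ingredient beyond \cref{thm:pomerenke}.
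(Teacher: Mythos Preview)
The paper does not prove this theorem; it is stated without proof as background, with the construction attributed to Temperley and Kenyon via the cited references. Your proposal reproduces the standard Temperley--Kenyon argument and is correct in outline.

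One small inaccuracy worth flagging: the path $\gamma_{\r}$ plays no role in the bijection itself. By assumption it lies entirely outside $\overline{D}$ (it only intersects $\overline{D}$ at $x_{\r}$) and is introduced solely to define the winding field used in \cref{prop:height_winding}. The ``Temperleyan'' feature in this setup is encoded entirely by the removal of the boundary primal vertices together with the single reference dual vertex; no slit through the domain is needed, and \cref{thm:pomerenke} is not actually invoked for the bijection. This does not affect the validity of your argument, only the attribution of roles in the last paragraph.
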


There is also a way to read the height function from the tree in this bijection. For this we need to introduce the winding field of a tree. For each face of the superposition graph, choose a smooth path connecting the adjacent primal and dual vertices and choose a marked point inside this face. We call these paths diagonals. For each face $f$, we call $\gamma_f$ the path obtained by starting at the marked point, following the diagonal up to the primal vertex, then following the UST branch from that vertex to $\partial D$, then the path $\partial D$ up to $x_\r$ and then $\gamma_\r$. The winding field of the UST is then by definition the function $f \to W(\gamma_f, \gamma_f(0))$.

\begin{prop}\label{prop:height_winding}
	There exist a choice of reference flow in the definition of the height function such that, in the bijection of \cref{thm:bijection}, the height function of a dimer configuration is equal to the winding field of the corresponding tree divided by $2\pi$. 
\end{prop}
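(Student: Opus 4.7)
The plan is to show that the height function and the winding field (divided by $2\pi$) are both determined on faces of the superposition graph by their increments across superposition edges, and that these increments agree locally. The ``choice of reference flow'' then only enters to align one global additive constant. So the proof reduces to a local computation around each superposition edge.

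First I would recall that, by construction of the height function, for any two faces $f_1, f_2$ sharing a superposition edge $e^*$ the increment $h(f_2) - h(f_1)$ equals $\sigma(e^*) - \sigma_{\r}(e^*)$, where $\sigma$ is the $\{0,1\}$-indicator of the current dimer configuration and $\sigma_\r$ that of any fixed reference matching (with signs coming from the orientation of $e^*$ relative to the dual path $f_1 \to f_2$). In particular, once the reference flow is fixed, $h$ is determined on all faces by its value at one reference face. Consequently, it is enough to exhibit a function $\tilde{h}$ whose superposition-edge increments coincide with those of $h$ for \emph{some} choice of reference flow; such $\tilde{h}$ then equals $h$ after adjusting the single additive constant (equivalently, adjusting $f_\r$ at the reference face).

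Next I would compute the increment of the winding field across a single superposition edge $e^*$. The key observation is that $\gamma_{f_1}$ and $\gamma_{f_2}$ coincide from a certain point onward: once the curves reach the common adjacent primal vertex (or dual vertex, in the other case), they follow the same UST branch out to $\partial D$, then the same arc of $\partial D$ to $x_\r$, and finally $\gamma_\r$. Concatenating $\gamma_{f_1}$ with the reverse of $\gamma_{f_2}$ (truncated where they meet) produces a short closed curve $\ell_{e^*}$ contained in a bounded neighbourhood of $e^*$. Using the definition of winding as a continuous choice of argument and the identity in the proposition just before \cref{thm:bijection}, one finds
\[
W(\gamma_{f_1}, \gamma_{f_1}(0)) - W(\gamma_{f_2}, \gamma_{f_2}(0)) \;=\; W(\ell_{e^*}, z) \pmod{2\pi},
\]
for any interior point $z$ of $\ell_{e^*}$; the integer multiple of $2\pi$ ambiguity is fixed by the continuous tracking of the argument along the shared tails of the curves.

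Then I would carry out the case analysis: for each type of superposition edge (primal half-edge vs.\ dual half-edge) and each possible local configuration of the UST branches at the adjacent primal/dual vertex, compute the winding of the local loop $\ell_{e^*}$ explicitly. Two facts make this tractable: the diagonals inside each face only contribute bounded smooth pieces, and the superposition bijection forces the UST branch emerging from the primal vertex adjacent to $f_i$ and the dual tree branch at the dual vertex adjacent to $f_i$ to be consistent with the dimer at $e^*$. A direct check in all local configurations gives that $W(\ell_{e^*},z)/(2\pi) \in \{-1,0,+1\}$ and matches exactly $\sigma(e^*) - \sigma_\r(e^*)$ for a canonical choice of $\sigma_\r$ (the one determined by the Temperleyan reference flow). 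This concludes the proof, with the main obstacle being precisely this case analysis: carefully bookkeeping the continuous argument of the tangent along the branch, through the diagonals, and across the finite detour near $e^*$, so that no spurious multiple of $2\pi$ appears and the signs of the winding increments match the height increments uniformly over all superposition edges.
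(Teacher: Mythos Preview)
The paper does not actually prove this proposition. It is stated in \cref{sec:winding} as background material imported from earlier work (the bijection from \cite{Kenyon1} and the winding--height correspondence analysed in detail in \cite{BLRtorus2}), with no argument given. So there is no ``paper's own proof'' to compare against; any complete argument would have to be reconstructed from those references.

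As for the content of your sketch: the overall strategy---reduce to edge increments, observe that $\gamma_{f_1}$ and $\gamma_{f_2}$ share a common tail so their winding difference is governed by a local loop, then match that local winding increment with the dimer/reference-flow increment---is indeed the standard way this identity is established (and is essentially how it is done in \cite{BLRtorus2}). Two points deserve care, however. First, your displayed identity for the winding difference is written modulo $2\pi$, but the whole point is to get the \emph{integer} part right; you acknowledge this but do not actually explain how the continuous tracking along the shared tail removes the ambiguity. Second, the case analysis you defer to (``a direct check in all local configurations'') is the entire substance of the proof: one must distinguish whether the shared superposition edge is a primal or dual half-edge, whether it carries a dimer or not, and verify that the resulting $\{-1,0,+1\}$ matches the flow difference with consistent signs across all cases. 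As written, your proposal asserts that this works but does not carry it out, so it is an outline rather than a proof.
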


\begin{remark}
	Since $\gamma_f$ in the above construction is always a path going to infinity, the winding of $\gamma_f$ around its endpoint must be $0$ so $W(\gamma_f, \gamma_f(0))$ is the natural generalisation to possibly rough curves of the intrinsic winding of $\gamma_f$.
\end{remark}

The second map between uniform spanning trees and the dimer model occurs on a special type of graphs called T-graphs and its construction is more involved than the above one so we will not give any detail. For our purpose, it is enough to point out that there is a direct analogue of \cref{thm:bijection,prop:height_winding}. Crucially it is still true in that case that the winding field of the UST is the height function of the associated dimer model. We refer to \cite{BLRannex} for details.

\begin{remark}\label{rq:winding_modulo2pi}
	By definition of the winding, $\gamma \to W(\gamma, x)$ depends continuously on the path $\gamma$ as long as $x \notin \gamma$ and it is not difficult to see that $\gamma \to W( \gamma, \gamma(0))$ also changes continuously when $\gamma$ is moved away from its starting point. On the other hand thanks to our convention that $\gamma_{\r}$ merges with the horizontal axis at some point, we see that for any face $f$, $W( \gamma_f, \gamma_f(0)) \equiv - \arg \gamma_f'(0) [2\pi]$ which actually does not depend on the spanning tree. Indeed $\gamma_f'(0)$ is determined only by the choice of diagonal inside $f$. Combining these two observations, we obtain that $W(\gamma_f, \gamma_f(0))$ is invariant under small modifications of $\gamma_f$ away from $f$.
\end{remark}

\section{Upper bound on $\frac{d \nu_1}{d \nu_2}$}\label{sec:upper_bound}

We now start the proof of \cref{thm:main}. Recall from the introduction that we consider three simply-connected domains $U, D_1, D_2$ with $\overline U \subset D_1 \cap D_1$ and that  $\mu_1 = \mu_1^\d$ and $\nu_1 = \nu_1^\d$ denote respectively the law of UST in $D_1^\d$ and its marginal obtained by only looking at the restriction of the UST to $U$, with similar notations for $D_2^\d$. From now on and until the end of \cref{sec:lower_bound}, we fix two such domains $D_1$ and $D_2$.
We can assume without loss of generality that $D_1 \subset D_2$ since otherwise we can just compare each domain to $D_1 \cup D_2$ and we also first assume that $D_1$ and $D_2$ have smooth boundaries and defer to \cref{continuity_bounded} the extension to domains with rough boundaries.

 With these assumptions in mind, we start in this section with the first bound under the $\nu_1$ measure from \cref{thm:main} where the upper bound is actually the only non-trivial part.


\subsection{Coupling}

 The upper bound $\nu_1 ( \frac{d \nu_1}{d \nu_2} \leq C ) \geq \epsilon$ means that any event (on $U$) which can happen in the smaller domain $D_1$ can also happen with a somewhat similar probability in the bigger domain $D_2$. To prove such a statement, the main step will be to couple $\mu_1$ with a (not too strongly) biased version of $\mu_2$.

\begin{prop}\label{prop:upper_bound_coupling}
	For all $\epsilon>0$, there exists $p>0$ such that for all $\delta$ small enough, there exists an event $E^\d$ such that 
\[
 d_{TV} \Big( \nu^\d_1, \nu^\d_2( . | E^\d)\Big) \leq \epsilon \qquad \text{and} \qquad \mu^\d_2 ( E^\d) \geq p.
\]
\end{prop}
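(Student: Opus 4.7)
The strategy is to couple the UST measures $\mu_1$ and $\mu_2$ through Wilson's algorithm driven by shared random walks, and to take $E^\d$ to be a tree-level event on $\mathcal{T}_2$ (the UST in $D_2^\d$) which guarantees that the coupling produces identical trees on $U$.

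First, I would apply Schramm's finiteness theorem (\cref{thm:finitude}) to $(U, D_1)$ with precision $\epsilon/4$, fixing $k = k(\epsilon)$, vertices $v_1, \ldots, v_k$ within $\epsilon$ of $\overline U$ and inside $D_1^\d$, and the continuation rule $v_{k+1}, v_{k+2}, \ldots$ exhausting $U^\d$ through short local walks. Since these vertices lie well inside $D_2$ as well, the same theorem gives the analogous Schramm-finiteness statement in $D_2^\d$: in either domain the restriction of the UST to $U$ is determined, up to an event of mass $\epsilon/4$, by the first $k$ branches together with short local completions. In particular, for $i > k$ the driving walks stay in a small neighbourhood of $\overline U \subset D_1$ and do not see the difference between $D_1^\d$ and $D_2^\d$.

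Next I would run Wilson's algorithm on $D_1^\d$ and $D_2^\d$ in parallel using a common sequence of driving random walks $X_i$ from $v_i$, so that the two outputs are marginally $\mu_1$ and $\mu_2$ distributed. The candidate event is
\[
E^\d = \bigcap_{i=1}^k \{\text{the $i$-th branch of }\mathcal{T}_2\text{ crosses }\p D_1\text{ exactly once}\},
\]
which is measurable with respect to $\mathcal{T}_2$. On $E^\d$ each of the first $k$ branches of $\mathcal{T}_2$ decomposes cleanly into a $D_1$-part from $v_i$ to $\p D_1$ and a $D_2 \setminus D_1$-part. The key observation is that if, in addition, the underlying walk $X_i$ does not revisit $D_1$ after exiting (a conditionally probable event), then no loop of $X_i$ bridges $D_1$ and $D_2 \setminus D_1$, loop-erasure commutes with restriction to $D_1$, and the $D_1$-parts of the first $k$ branches of $\mathcal{T}_2$ coincide with the first $k$ branches of $\mathcal{T}_1$. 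Combined with Schramm's finiteness in both domains, this yields $\mathcal{T}_1|_U = \mathcal{T}_2|_U$ on $E^\d$ up to probability $\epsilon/2$ in the coupling.

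The lower bound $\mu_2(E^\d) \geq p > 0$ would follow by induction over $i \leq k$: conditionally on the previous branches, the walk $X_i$ hits the current $D_2$-tree restricted to $D_1$, or else exits $D_1$ cleanly and then reaches $\p D_2$ in $D_2 \setminus D_1$ without returning, with uniform positive probability. This is a consequence of \cref{beurling} and \cref{lem:harmonic_measure} applied in the annular region $D_2 \setminus \overline{D_1}$, using that $\p D_1$ and $\p D_2$ are smooth and at positive distance from each other.

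\medskip

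The hard part, which I expect to be the main obstacle, is to upgrade the coupling statement to the total variation bound $d_{TV}(\nu_1, \nu_2(\cdot \mid E^\d)) \leq \epsilon$. Under the coupling, $\nu_2(\cdot\mid E^\d)$ is obtained from $\nu_1$ by reweighting the law of the first $k$ branches by the conditional probability of the walk-level ``clean crossing and no return'' event given those branches. This reweighting factor depends only on the endpoints of the first $k$ $D_1$-branches on $\p D_1$. Its variation must be controlled by exploiting boundary regularity of harmonic measure on the smooth curve $\p D_1$ together with Schramm's finiteness, which stabilises the macroscopic structure of the endpoint distribution, so that the factor is approximately constant on typical configurations. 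Only then does the reweighting contribute at most $\epsilon$ in total variation, closing the argument.
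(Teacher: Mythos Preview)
Your diagnosis of the difficulty is accurate: the obstacle you call the ``hard part'' is real and is exactly where your approach stalls. When you condition the coupled pair $(\mathcal T_1,\mathcal T_2)$ on $E^\d$ (together with the walk-level ``no return'' event you also need), the marginal law of $\mathcal T_1$ is no longer $\mu_1$; it is $\mu_1$ reweighted by the conditional probability of those events given the first $k$ $D_1$-branches. This factor depends on where those branches hit $\partial D_1$, and there is no reason for it to be close to a constant: different exit configurations have genuinely different chances of continuing to $\partial D_2$ without re-entering $D_1$. Your suggestion to control its variation via ``boundary regularity of harmonic measure'' is not a proof, and Schramm finiteness says nothing about the distribution of exit points, so the total-variation conclusion does not follow.

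The paper sidesteps this obstacle by a different choice of event. Instead of conditioning on a property of the first $k$ branches, it first samples one \emph{extra} branch $Y^0$ in $D_2$ from a point $x_0$ adjacent to $\partial D_1$, and takes $E^\d=E_r$ to be the event that $Y^0$ traces the entire curve $\partial D_1$ within distance $r$ before escaping to $\partial D_2$ through $D_2\setminus D_1$. This has positive probability by \cref{lem:RWfollow} and \cref{beurling}. The crucial point is that $Y^0$ plays no role whatsoever in building $\mathcal T_1$: after conditioning on $E_r$, one runs Wilson's algorithm in both domains from $x_1,x_2,\ldots$ using the \emph{same, unconditioned} walks, so the $\mathcal T_1$-marginal is exactly $\mu_1$ and your ``hard part'' never arises. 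On the $D_2$ side, $Y^0$ now acts as a surrogate boundary sitting within $r$ of $\partial D_1$; a Beurling argument (\cref{lem:subtree}) shows each subsequent walk hits $\partial D_1$ and $Y^0$ at essentially the same place, so the two loop-erasures agree away from an $O(r)$-neighbourhood of $\partial D_1$ with high probability. Schramm finiteness then closes the argument as you outlined. The moral is to package all the conditioning into a single preparatory branch that the $D_1$-tree never sees, rather than spread it across the $k$ branches shared by both trees.
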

With a slight abuse of notation, in the above statement we denote by $\nu^\d_2 (.  | E^\d)$ the law of the restriction to $U$ of $\mu^\d_2 ( . | E^\d)$ even if $E^\d$ is not measurable with respect to this restriction.

The first step of the proof is to define the event $E^\d$. Recall that $D_1$ is a locally connected set so  $\partial D_1$ can be parametrised by a curve, which we still call $\partial D_1$ with a slight abuse of notation. Fix $\delta > 0$ and take $x_0 \in D_2^\d$ such that $|\partial D_1(0) - x_0| < r$ and let $Y^0$ denote the branch in the UST of $D_2$ starting at $x_0$ (parametrised in $[0, 1]$ so that $Y^0(1) \in \partial D_2^\d$). For any $r > 0$ (to be chosen small later), let $E_r$ be the event defined as follows : there exists $t >0$ such that
\begin{itemize}
	\item As curves up to re-parametrisation, $Y^0([0, t])$ and $\partial D_1$ are at distance at most $r$,
	\item $Y^0[t, 1] \subset D_2^\d \setminus D_1^\d$.
\end{itemize}
By \cref{lem:RWfollow,beurling}, $\P( E_r) > p$ for some $p > 0$ depending on $r$ but not of $\delta$ (as long as it is small enough).

We now introduce a coupling of $\mu_1$ and $\mu_2( . | E_r)$ as follows. First we sample the branch from $x_0$ conditioned on $E_r$. Then we sample the rest of the trees by Wilson's algorithm using the same sequence of starting points  $x_1, x_2, \ldots$ for the walks in both domains. More precisely, for each starting point $x_j$ we consider an independent random walk $X^j$ up to its first exist of $D_2$ (which also determines of course the walk up to its exit from $D_1$). We can use these walks to build either a spanning tree of $D_1^\d$ or $D_2^\d$ by just considering different stopping times so globally we obtain a coupling of $\mu_1$ and $\mu_2( . | E_r)$. In other word, we see Wilson's algorithm in any domain as a function of random walk trajectories and we just apply two of these functions to the same set of trajectories. Let us denote by $\cT_1$ and $\cT_2$ the UST of $D_1^\d$ and $D_2^\d$ respectively under this coupling. We will denote by $\cT_i^{j}$ the partial tree generated by the first $j$ branches in $D_i$, with the convention that $\cT_1^0 = \partial D_1^\d$ and $\cT_2^0$ is the union of 
$\p  D_2^\d$ and the branch from $x_0$. Similarly we let $Y_i^j$ denote the loop erasure of the $j$th walk in the algorithm in $D_i$, i.e $Y_i^j = \cT_i^j \setminus \cT_i^{j-1}$.

\begin{lem}\label{lem:subtree}
	For all $\epsilon > 0$, there exists $C> 0$ such that for all $n \geq 1$, for all $r>0$ small enough and $\delta$ small enough, for all sequence of points $x_j \in D_1^\d \cap D_2^\d$ the above coupling satisfies
	\[
	\P( \cT^n_1 1_{\tilde D_{C^n r}} = \cT^n_2 1_{\tilde D_{C^n r}} ) \geq 1 - n\epsilon
	\]
	where $\tilde D_{r} = \{ x \in D_1 : d(x, \partial D_1) \geq r\}$.
\end{lem}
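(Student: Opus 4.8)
The statement compares the two spanning trees built from the \emph{same} family of random-walk trajectories $(X^j)$, so the strategy is an induction on $n$, the number of branches sampled after the initial one. The key point is that the $j$-th branch in $D_1$ and in $D_2$ come from the same walk $X^j$: the walk $X^j$ runs until it exits $D_2$, but we read off the branch in $D_i$ by stopping it when it first hits $\cT_i^{j-1}$ (using the exit time convention, ``hitting'' includes hitting $\partial D_i$). Thus if the two partial trees already agree deep inside $D_1$, then as long as $X^j$ has not yet exited the region where they agree, the two loop-erasures $Y_1^j$ and $Y_2^j$ coincide. The loss at each step comes exactly from the portion of $X^j$ that wanders close enough to $\partial D_1$ to ``see'' a difference before it is absorbed.

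\textbf{Base case.} For $n=0$ there is nothing to prove since $\cT_1^0 = \partial D_1^\d$ and $\cT_2^0$ contains $\partial D_2^\d$ together with $Y^0$; inside $\tilde D_r$ (for $r$ small), $Y^0 \cap \tilde D_r$ is arranged on the event $E_r$ to be empty — by definition of $E_r$, once $Y^0$ leaves a $r$-neighbourhood of $\partial D_1$ it stays in $D_2 \setminus D_1$, hence never enters $\tilde D_r$. So the two trees agree on $\tilde D_r$, and one can take $C \geq 1$ so that $\tilde D_{C^0 r} = \tilde D_r$ works. (If one wants to be careful, a small probability $\epsilon$ of failure is already absorbed in the statement's $n\epsilon$ when $n \geq 1$, and for $n=1$ the bound is $1 - \epsilon$ anyway.)

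\textbf{Inductive step.} Suppose $\cT_1^{j-1}$ and $\cT_2^{j-1}$ agree on $\tilde D_{C^{j-1}r}$ with probability at least $1-(j-1)\epsilon$. Work on that event. Now run $X^j$ from $x_j$. If $x_j \notin \tilde D_{C^{j-1}r}$ one has to be slightly more careful, but the main case is $x_j$ inside; either way, let $\sigma$ be the first time $X^j$ exits $\tilde D_{C^{j-1}r}$. Up to time $\sigma$, the walk sees only the common part of the two partial trees, so if it hits $\cT_1^{j-1}$ (equivalently $\cT_2^{j-1}$) before $\sigma$, then $Y_1^j = Y_2^j$ and both new trees still agree on $\tilde D_{C^{j-1}r} \supseteq \tilde D_{C^j r}$. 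The bad event is that $X^j$ reaches the $C^{j-1}r$-neighbourhood of $\partial D_1$ without first hitting the partial tree. I claim this bad event — or more precisely, the event that, because of it, the new branches $Y_1^j$ and $Y_2^j$ differ \emph{inside} $\tilde D_{C^j r}$ — has probability at most $\epsilon$, provided $C$ is chosen large enough and then $r$, $\delta$ small enough. The point is that once $X^j$ has come within distance $C^{j-1}r$ of $\partial D_1$, it must then travel back a distance of order $C^j r - C^{j-1}r = (C-1)C^{j-1}r$ to re-enter $\tilde D_{C^j r}$, and by the Beurling-type estimate \cref{beurling} (applied on the annulus between radii $C^{j-1}r$ and roughly $(C-1)C^{j-1}r$ around the point where it got close to the boundary), the probability that the loop-erased walk does this without being absorbed is at most $(1/(C-1))^\alpha$; choosing $C$ with $(1/(C-1))^\alpha \leq \epsilon$ does it. More carefully: after the first visit to the $C^{j-1}r$-collar of $\partial D_1$, the relevant part of $\partial D_1^\d$ (resp. $\partial D_2^\d$) is itself part of $\cT_i^{j-1}$ in the wired-boundary sense, so with high probability $X^j$ is absorbed by the boundary at distance $O(C^{j-1}r)$ from $\partial D_1$ before ever returning to $\tilde D_{C^j r}$, and then $Y_1^j \cap \tilde D_{C^j r} = Y_2^j \cap \tilde D_{C^j r}$. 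Summing the failure probabilities over the induction gives the bound $1 - n\epsilon$. Note $C$ can be taken uniform in $n$ because the estimate at step $j$ only involves the ratio of consecutive radii, which is the constant $C-1$.

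\textbf{Main obstacle.} The delicate point is making precise the reduction ``$X^j$ enters the collar of $\partial D_1$ $\Rightarrow$ the new branches differ inside $\tilde D_{C^j r}$ with small probability'', because in principle $X^j$ could enter the collar, come back out past level $C^j r$, and only \emph{then} get loop-erased in a way that matters; one must combine the Markov property at the first collar-entry time with the Beurling estimate for the \emph{remaining} walk, and also handle the fact that the branch is the loop-erasure of $X^j$, not $X^j$ itself (but loop-erasure only removes pieces, so a loop-erased walk that stays in a set is a subset of the original walk's range — the containment goes the favourable way). A second, more bookkeeping-type issue is the starting point: \cref{lem:subtree} is stated for arbitrary sequences $x_j \in D_1^\d \cap D_2^\d$, and a point $x_j$ that is itself within distance $C^{j-1}r$ of $\partial D_1$ should simply be absorbed essentially immediately (again by \cref{beurling}), contributing its own small error term; one should state the induction so that this is automatically covered, e.g. by tracking the event ``both new branches agree on $\tilde D_{C^j r}$'' rather than ``agree everywhere''. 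Everything else is a routine application of \cref{lem:RWfollow}, \cref{beurling}, and the wired-boundary conventions set up in \cref{sec:assumptions}.
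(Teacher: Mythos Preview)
Your overall strategy---induction on the number of branches, using Beurling's estimate to control what happens once the walk leaves the region where the partial trees agree---matches the paper's, and your identification of the ``main obstacle'' is exactly right. However, your resolution of that obstacle has a genuine gap.

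You claim (implicitly) that if $X^j$, after first exiting $\tilde D_{C^{j-1}r}$ at time $\sigma$, never re-enters $\tilde D_{C^jr}$, then $Y_1^j\cap\tilde D_{C^jr}=Y_2^j\cap\tilde D_{C^jr}$. This deterministic implication is false. Here is a schematic counterexample: before $\sigma$, let $X^j$ visit $p\in\tilde D_{C^jr}$, then $q\in\tilde D_{C^{j-1}r}\setminus\tilde D_{C^jr}$, then $p'\in\tilde D_{C^jr}$, then exit at $\sigma$; after $\sigma$ (staying outside $\tilde D_{C^jr}$) let it hit $\cT_1^{j-1}$ at $\tau_1$, then revisit $q$ (which lies in the shell, so this is allowed by your event), then hit $\cT_2^{j-1}$ at $\tau_2$. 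Then $Y_1^j=\fLE(X^j[0,\tau_1])$ contains $p'$, but when you extend the forward erasure to $\tau_2$, the revisit to $q$ erases the suffix of $Y_1^j$ from $q$ onward---including $p'$. So $Y_2^j$ omits $p'\in\tilde D_{C^jr}$. Your parenthetical ``containment goes the favourable way'' addresses only the \emph{range} of the loop-erasure, not the fact that a late trigger at a shallow point $q$ can erase a deep point $p'$ visited earlier.

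The paper fixes this with a second estimate. It introduces $\tau=\min(\tau_1,\tau_2)$ (the first time $X^j$ hits \emph{either} partial tree) and uses two Beurling applications: (A) after $\tau$, the walk stays in $B(X^j(\tau),Cr)$ until it hits the other tree, and (B) after the first exit from an intermediate level $\tilde D_{Cr}$, the walk never returns to $\tilde D_{C^2r}$. Event (A) forces every erasure trigger after $\tau$ to lie outside $\tilde D_{Cr}$; since $X^j[0,\sigma_0)\subset\tilde D_{Cr}$, such a trigger point $q$ on $Y_1^j$ must correspond to a walk-time $\geq\sigma_0$; by the increasing-time property of forward loop-erasure, the entire erased suffix of $Y_1^j$ then corresponds to times $\geq\sigma_0$, hence by (B) lies outside $\tilde D_{C^2r}$. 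This three-level structure ($r$, $Cr$, $C^2r$) is what your two-level argument ($C^{j-1}r$, $C^jr$) is missing.
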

\begin{proof}
		We proceed by induction.
	First for $n = 1$, let $\tau$ be the first time where $X^1$ hits $\cT_1^0 \cup \cT_2^0$, evidently the loop-erasures agree up to that time. Suppose first that $X^1(\tau) \in \cT_1^0$. By definition of the event $E_r$, $X^1(\tau)$ is within distance $r$ of $\cT_{2}^0$. By Beurling's estimate (\cref{beurling}), we can find $C$ large enough (and independent of everything else) so that with probability $1-\epsilon$, the random walk will hit $\cT_{2}^0$ without exiting $B(X^1(\tau), Cr)$. Note that this also shows that with probability $1-\epsilon$, after the first exit from $\tilde D_{Cr}$ the walk does not come back to $\tilde D_{C^2 r}$. When both events occur, any loop erased after $\tau$ must have a point outside $\tilde D_{Cr}$ but any such loop cannot exit $\tilde D_{C^2 r}$ and of course any piece of the loop-erased walk added after $\tau$ is in $B(X^1(\tau), Cr)$, in particular we have $\cT_1^1 1_{\tilde D_{C^2 r}} = \cT^1_{2} 1_{\tilde D_{C^2 r}}$ as desired (with $2\epsilon$ and $C^2$).

Now consider the case where $X^1 (\tau) \in \cT_2^0$, and note that we must have $X^1(\tau) \in Y^0 \cap D_1^\d$ since by definition $X^1$ has not yet exited from $D_1^\d$. Again by definition of $E_r$, $X^1(\tau)$ must be within distance $r$ from $\partial D_1$ so we are back in the same situation as above.

For $n > 1$, we argue exactly as for the $n=1$ case. The only difference is that if $X^n$ hits $\cT^{n-1}$ before exiting $\tilde D_{C^{n-1} r}$ there is nothing to prove.
\end{proof}

We are now ready to finish the proof of the proposition.

\begin{proof}[Proof of \cref{prop:upper_bound_coupling}]
	Fix $\epsilon_0 > 0$ and $\eta > 0$ such that for all $x \in U$ and $y \in \partial D_1, d( x, y ) > 2 \eta$.
	
	By the finiteness theorem \ref{thm:finitude}, we can find $n$ and $x_1, \ldots x_n$ such that, with probability $1-\epsilon_0/2$, after the first $n$ steps no remaining random walk used in Wilson's algorithm will reach distance $\eta$ from $U$.
	
	We then apply \cref{lem:subtree} with $\epsilon = \epsilon_0/2n$ and $r$ sufficiently small so that $C^n r < \eta$. \cref{lem:subtree} shows that $\P( \cT^n_1 1_{\tilde D_\eta} = \cT^n_2 1_{\tilde D_\eta} | E_r) \geq 1-\epsilon_0/2$. Recalling the finiteness theorem, this shows that $\P( \cT_1 1_{U} = \cT_2 1_{U} | E_r) \geq 1-\epsilon_0$ as desired since as noted just after its definition the probability of $E_r$ can be bounded independently of $\delta$.
\end{proof}

\subsection{Radon-Nikodym bound}

We now derive from the coupling of the previous section a bound of the form used in \cref{thm:main}. 

\begin{lem}\label{coupling_to_radon}
	Let $\nu_1, \nu_2, \tilde \nu_2$ be three measures such that, for some $\epsilon, p > 0$,
	\[
	d_{TV}( \nu_1, \tilde \nu_2 ) \leq \epsilon, \quad \text{and} \quad \frac{d \tilde \nu_2}{d \nu_2} \leq \frac{1}{p}.
	\]
	Then we have
	\[
	\nu_1  ( \epsilon \leq\frac{d \nu_1}{d\nu_2} \leq \frac{2}{p}) \geq 1-5\epsilon.
	\]
\end{lem}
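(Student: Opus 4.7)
The proof is a pure measure-theoretic manipulation: the hypotheses specify how $\nu_1$ compares with $\tilde\nu_2$ in total variation and how $\tilde\nu_2$ compares with $\nu_2$ via Radon-Nikodym, and one chains these to compare $\nu_1$ with $\nu_2$ directly. First I would introduce a common dominating measure $\mu$ (for instance $(\nu_1+\nu_2)/2$) and set $f_1 = d\nu_1/d\mu$, $f_2 = d\nu_2/d\mu$, $\tilde f_2 = d\tilde\nu_2/d\mu$. Rewriting the hypotheses in these terms, I have $\tilde f_2 \leq (1/p) f_2$ $\mu$-a.e., and since $\nu_1, \tilde \nu_2$ are both probability measures, $\int (f_1 - \tilde f_2)_+\, d\mu = \int(\tilde f_2-f_1)_+\, d\mu = d_{TV}(\nu_1,\tilde\nu_2)\leq \epsilon$.

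For the upper bound on $d\nu_1/d\nu_2$, I would split the space. On the set $\{f_2>0,\ f_1 > 2f_2/p\}$ the hypothesis $\tilde f_2 \leq f_2/p$ yields $f_1 > 2\tilde f_2$, hence $(f_1-\tilde f_2)_+ \geq f_1/2$, and integrating gives a $\nu_1$-mass bounded by $2\int(f_1-\tilde f_2)_+ d\mu \leq 2\epsilon$. On the singular set $\{f_2 = 0\}$ we have $\tilde f_2 = 0$ by the dominance hypothesis, so $f_1 = (f_1-\tilde f_2)_+$ there and the $\nu_1$-mass is at most $\epsilon$. With the convention $f_1/f_2 = +\infty$ where $f_2 = 0 < f_1$, these together show $\nu_1(f_1/f_2 > 2/p) \leq 3\epsilon$.

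For the lower bound, a single line suffices: on $\{f_1 < \epsilon f_2\}$ (which automatically has $f_2 > 0$), one has
\[
\nu_1(\{f_1 < \epsilon f_2\}) = \int_{\{f_1 < \epsilon f_2\}} f_1\, d\mu \leq \epsilon \int f_2\, d\mu = \epsilon.
\]
Adding the two bad regions, I would conclude $\nu_1(\{\epsilon \leq f_1/f_2 \leq 2/p\}) \geq 1 - 4\epsilon \geq 1-5\epsilon$.

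I do not anticipate any real obstacle; the only delicate point is making sure that the part of $\nu_1$ which is $\nu_2$-singular (i.e.\ carried by $\{f_2 = 0\}$) is handled correctly, and this is precisely where the Radon-Nikodym hypothesis forces $\tilde f_2$ to vanish as well, so that this singular mass is controlled by the TV hypothesis. The extra slack between my $4\epsilon$ and the stated $5\epsilon$ simply reflects that the constants in the statement are not sharp.
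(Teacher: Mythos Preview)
Your argument is correct and in fact slightly sharper than the paper's, yielding $1-4\epsilon$ rather than $1-5\epsilon$. The paper proceeds in two steps: it first uses the total variation bound in the form $\int \lvert \tfrac{d\tilde\nu_2}{d\nu_1}-1\rvert\,d\nu_1\le 2\epsilon$ together with Markov's inequality to obtain $\nu_1(\tfrac{d\nu_1}{d\tilde\nu_2}\le 2)\ge 1-4\epsilon$, and then chains with $\tfrac{d\tilde\nu_2}{d\nu_2}\le 1/p$; the lower bound is handled exactly as you do. Your route bypasses the intermediate comparison with $\tilde\nu_2$ by working directly with the densities $f_1,f_2,\tilde f_2$ against a common dominating measure and splitting on $\{f_2>0\}$ versus $\{f_2=0\}$. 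This has the advantage of making the treatment of the $\nu_2$-singular part of $\nu_1$ explicit, a point the paper's write-up glosses over when it forms $\tfrac{d\tilde\nu_2}{d\nu_1}$ and $\tfrac{d\nu_1}{d\tilde\nu_2}$ without checking absolute continuity. Substantively, though, both proofs are the same elementary density computation.
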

\begin{proof}
	First the lower bound is trivial, indeed 
	\[
	\nu_1 ( \frac{d \nu_1}{d\nu_2} < \epsilon ) = \int  \frac{d \nu_1}{d\nu_2} 1_{ \frac{d \nu_1}{d\nu_2} < \epsilon} d \nu_2 \leq \epsilon.
	\]
	
	For the upper bound, we first note that interpreting the total variation distance as the $L^1$ norm on densities, we get
	\[
	\int  \abs{\frac{d \tilde \nu_2}{d \nu_1} - 1}  d\nu_1 \leq 2 \epsilon.
	\]
	By Markov, $\nu_1( \abs{\frac{d \tilde\nu_2}{d \nu_1} -1} \geq \frac{1}{2} ) \leq  4 \epsilon$ so $\nu_1( \frac{d \tilde\nu_2}{d \nu_1} \leq \frac{1}{2} ) \leq  4 \epsilon$, or in other words
	\[
	\nu_1 ( \frac{d \nu_1}{d \tilde \nu_2} \leq 2) \geq 1 - 4 \epsilon.
\]
We conclude by the assumption $\frac{d \tilde \nu_2}{d \nu_2} \leq \frac{1}{p}$ and union bound.
\end{proof}

Applying \cref{coupling_to_radon} with $\tilde \nu_2 = \nu_2 ( . | E)$ for the event $E$ given by \cref{prop:upper_bound_coupling}, we immediately obtain the first half of \cref{thm:main}:
\begin{prop}\label{prop:upper_bound}
	For any $\epsilon > 0$ there exists $C > 0$ such that for all $\delta$ small enough,
	\[
	\nu_1 ( \frac{1}{C} \leq \frac{d \nu_1}{d\nu_2} \leq C) \geq 1-\epsilon.
	\]
\end{prop}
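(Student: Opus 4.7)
The plan is simply to combine \cref{prop:upper_bound_coupling} with the measure-theoretic \cref{coupling_to_radon}, taking the perturbed measure in the latter to be the conditioning of $\mu_2$ on the good event $E^\d$.

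More concretely, fix $\epsilon > 0$. I would first apply \cref{prop:upper_bound_coupling} with parameter $\epsilon / 5$, obtaining an event $E = E^\d$ and a constant $p > 0$ such that $d_{TV}(\nu_1, \nu_2(\,\cdot\,|E)) \leq \epsilon / 5$ and $\mu_2(E) \geq p$ for all $\delta$ small enough. Set $\tilde \nu_2 := \nu_2(\,\cdot\,|E)$, understood as the pushforward under restriction to $U$ of $\mu_2(\,\cdot\,|E)$.

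Next I would check the hypotheses of \cref{coupling_to_radon}. The total variation bound on $(\nu_1, \tilde \nu_2)$ is built in. For the Radon--Nikodym bound $d\tilde \nu_2 / d\nu_2 \leq 1/p$, note that for any event $A$ depending only on the restriction of the tree to $U$,
\[
\tilde \nu_2(A) \;=\; \frac{\mu_2(A \cap E)}{\mu_2(E)} \;\leq\; \frac{\mu_2(A)}{\mu_2(E)} \;=\; \frac{\nu_2(A)}{\mu_2(E)} \;\leq\; \frac{\nu_2(A)}{p},
\]
so indeed $d\tilde \nu_2 / d\nu_2 \leq 1/p$ uniformly in $\delta$. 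Applying \cref{coupling_to_radon} then yields
\[
\nu_1\Bigl( \tfrac{\epsilon}{5} \leq \tfrac{d\nu_1}{d\nu_2} \leq \tfrac{2}{p} \Bigr) \;\geq\; 1 - \epsilon,
\]
and the conclusion follows upon setting $C := \max(5/\epsilon, 2/p)$.

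There is no real obstacle here: the substance of the argument is already packaged into \cref{prop:upper_bound_coupling} (the coupling construction with Wilson's algorithm, Schramm finiteness, Beurling) and \cref{coupling_to_radon} (the soft measure-theoretic step). The only thing to be careful about is remembering that $E^\d$ is defined in terms of the full tree in $D_2^\d$ and not of its restriction to $U$, so one must verify that conditioning on $E^\d$ still produces a bounded Radon--Nikodym derivative \emph{at the level of the marginals} $\nu_2$, which is the one-line computation above.
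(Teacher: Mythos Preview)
Your proposal is correct and follows exactly the paper's own argument: apply \cref{coupling_to_radon} with $\tilde\nu_2 = \nu_2(\,\cdot\,|E^\d)$ coming from \cref{prop:upper_bound_coupling}. Your explicit verification that $d\tilde\nu_2/d\nu_2 \leq 1/p$ at the level of the marginals on $U$ is the only point the paper leaves implicit, and you handle it correctly.
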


This last statement can be also rewritten as an inequality involving the probabilities under both measures for any event.

\begin{prop}\label{prop:upper_bound_event}
	There exists two increasing functions $f$ and $g$ from $(0,1)$ to itself such that for any event $A$, we have the following inequality,
	\[
	g(\nu_1(A)) \leq \nu_2(A) \leq f(\nu_1(A)).
	\]
\end{prop}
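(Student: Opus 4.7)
The plan is to translate the Radon--Nikodym control of \cref{prop:upper_bound} into a pointwise comparison of event probabilities, and then symmetrize by applying the resulting estimate both to $A$ and to its complement $A^c$. Concretely, for each $\epsilon>0$, let $C(\epsilon)\geq 1$ be the constant given by \cref{prop:upper_bound}, taken minimal so that $\epsilon\mapsto C(\epsilon)$ is non-increasing (and independent of $\delta$ for $\delta$ small enough), and set $G_\epsilon=\{C(\epsilon)^{-1}\leq d\nu_1/d\nu_2\leq C(\epsilon)\}$, so that $\nu_1(G_\epsilon^c)\leq \epsilon$. Decomposing $\nu_1(A)=\nu_1(A\cap G_\epsilon)+\nu_1(A\cap G_\epsilon^c)$ and integrating the density bound on $G_\epsilon$ immediately yields the one-sided estimate
\[
\nu_1(A)\leq C(\epsilon)\,\nu_2(A)+\epsilon \qquad\text{for every event } A.
\]

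I would then apply this inequality once to $A$ and once to $A^c$. The first application rearranges to $\nu_2(A)\geq (\nu_1(A)-\epsilon)/C(\epsilon)$, which is positive as soon as $\epsilon<\nu_1(A)$. The second gives $1-\nu_1(A)\leq C(\epsilon)(1-\nu_2(A))+\epsilon$, i.e.\ $\nu_2(A)\leq 1-(1-\nu_1(A)-\epsilon)/C(\epsilon)$, which is strictly less than $1$ whenever $\epsilon<1-\nu_1(A)$. The key idea is then to let $\epsilon$ depend on $a=\nu_1(A)$ in a symmetric fashion: choosing $\epsilon=a/2$ in the lower bound and $\epsilon=(1-a)/2$ in the upper bound naturally suggests the definitions
\[
g(a):=\frac{a}{2\,C(a/2)},\qquad f(a):=1-\frac{1-a}{2\,C((1-a)/2)}.
\]

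What remains is to check that $f$ and $g$ are increasing maps from $(0,1)$ to $(0,1)$. Since $C(\cdot)\geq 1$ is non-increasing, the numerator of $g$ grows in $a$ while the denominator is non-increasing, so $g$ is strictly increasing and stays in $(0,1)$; symmetrically, $(1-a)/[2\,C((1-a)/2)]$ is strictly decreasing and lies in $(0,1)$, so $f$ is strictly increasing on $(0,1)$ with values in $(0,1)$. The only subtle point of the plan is precisely this monotonicity bookkeeping: any fixed choice of $\epsilon$ would let the lower bound collapse to $0$ near $a=0$, or the upper bound reach $1$ near $a=1$, so tying $\epsilon$ symmetrically to $a$ is what turns \cref{prop:upper_bound} into the required increasing comparison functions. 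Beyond this I do not anticipate any serious obstacle, since once \cref{prop:upper_bound} is available the argument is essentially formal.
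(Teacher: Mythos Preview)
Your proof is correct and follows the same core idea as the paper: both start from \cref{prop:upper_bound} and derive the one-sided estimate $\nu_1(A)\leq C(\epsilon)\nu_2(A)+\epsilon$, then optimise over $\epsilon$ to define $g$. The paper packages this via $G(x)=\inf_\epsilon\{\epsilon+C_\epsilon x\}$ and its generalised inverse, while you make the explicit choice $\epsilon=a/2$; these are equivalent in spirit.

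The one genuine difference is in the construction of $f$. The paper bounds $\nu_2(A)\leq \nu_2(E_\epsilon^c)+C_\epsilon\,\nu_1(A)$ directly and sets $f(x)=\inf_\epsilon\{\nu_2(E_\epsilon^c)+C_\epsilon x\}$, which requires a bound on $\nu_2(E_\epsilon^c)$ (not given directly by \cref{prop:upper_bound}, which only controls $\nu_1(E_\epsilon^c)$) and also leaves the monotonicity and range of $f$ unchecked. Your symmetry trick of applying the same one-sided estimate to $A^c$ sidesteps this cleanly and yields explicit increasing functions with verified range. So your route is slightly more self-contained, while the paper's is more implicit; either way the argument is essentially formal once \cref{prop:upper_bound} is in hand.
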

\begin{proof}
	For $\ep >0$, let $C_\epsilon$ be the constant from \cref{prop:lower_bound} and let $A$ be any event. We define the event $E_{\ep} := \{\frac{1}{C_\epsilon} \leq \frac{d \nu_1}{d\nu_2} \leq C_\epsilon\}$ where $C$ is as in \cref{prop:upper_bound}. 
	By union bound we have $\nu_1(A) \leq \ep + \nu_1(A)$ which leads us to the inequality $\nu_1(A) \leq \ep C_\epsilon\nu_2(A)$ by \cref{prop:upper_bound}. \\
	Defining $G(x):= \inf_{\ep} \{ \ep + C_\epsilon x\}$ and $g(x):= \inf\{u, G(u) \geq x\}$ we have the first half of the inequality,
	\[
	g(\nu_1(A)) \leq \nu_2(A)
	\]
	In a similar fashion, we have by union bound and \cref{prop:upper_bound} the upperbound $\nu_2(A) \leq \nu_2(E_{\ep}^{c}) + C_\epsilon \nu_1(A)$. Leading us to the second part of the inequality 
	\[
	\nu_2(A) \leq f(\nu_1(A))
	\]
	in which $f(x):= \inf_{\ep} \{\nu_2(E_{\ep}^{c}) + C_\epsilon x\}$
\end{proof}

\section{Lower bound on $\frac{d \nu_1}{d \nu_2}$}\label{sec:lower_bound}

We now turn to the second inequality from \cref{thm:main}. As in the previous section the main point is the following coupling statement.

\begin{prop}\label{prop:lower_bound_coupling}
	For all $\epsilon > 0$, there exists $p > 0$ depending only on $U, D_1, D_2$ such that for all $\delta$ small enough, there exists a law $\tilde \nu_1$ such that
	\[
	d_{TV}(\nu_2, \tilde \nu_1 ) \leq \epsilon, \qquad \frac{d \tilde \nu_1 }{ d \nu_1} \leq \frac{1}{p}.
	\]
\end{prop}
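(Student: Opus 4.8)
The plan is to reverse the roles from the upper bound: now $D_2$ is the "reference" domain and we want to reconstruct (a slightly biased version of) the law in the \emph{smaller} domain $D_1$ from a sample in $D_2$. The key difficulty is that a branch of the UST in $D_2$ may wander into $D_2 \setminus D_1$, which never happens in $D_1$; so unlike the upper bound, we cannot simply run Wilson's algorithm with shared trajectories and hope the loop-erasures agree near $U$. Instead, we first sample the UST in $D_2$, then \emph{resample} it inside $D_1$ in a way that (a) leaves the restriction to $U$ essentially unchanged with high probability and (b) has a Radon--Nikodym cost bounded below by a $\delta$-independent constant. Concretely, I would condition on the event that the first several Wilson branches behave well: using the finiteness theorem (\cref{thm:finitude}) applied to the domain $D_1$ and the set $U$, choose $n$ and points $x_1,\dots,x_n$ within distance $\eta$ of $U$ (with $\eta$ chosen so that $d(U,\partial D_1) > 2\eta$) such that, except on an event of probability $\epsilon_0$, the first $n$ branches of the UST in $D_1$ already determine everything in a neighbourhood of $U$, all later Wilson walks hitting $\cT_{i-1}$ before leaving a small ball, and no component of $\cT_U\setminus\cT_n$ has diameter more than $\eta$.

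The heart of the argument is then a branch-by-branch coupling of the first $n$ branches. For the UST in $D_2$, sample branches from $x_1,\dots,x_n$ using Wilson's algorithm; for each such walk $X^j$ I would define the corresponding $D_1$-walk by running the \emph{same} trajectory but stopping it at its first exit of $D_1$ (in the wired sense of \cref{sec:assumptions}). The event $\tilde E^\d$ on which we condition $\mu_2$ is: for each $j \le n$, the walk $X^j$, run in $D_2$ until it hits $\cT_2^{j-1}$, actually hits $\cT_1^{j-1}$ (equivalently its $D_2$-hitting point and $D_1$-hitting point coincide, or at least lie within distance $r$ of each other and within $B(\cdot, Cr)$ of one another) and moreover stays inside $\tilde D_\eta$ until it does so, where $\tilde D_\eta = \{x\in D_1 : d(x,\partial D_1)\ge\eta\}$. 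By \cref{lem:RWfollow}, \cref{beurling} and the finiteness theorem, and because the $x_j$ are at distance $\le\eta$ from $U$ hence in the "bulk" of $D_1$, this event has $\mu_2$-probability bounded below by some $p = p(U,D_1,D_2,\epsilon) > 0$ independent of $\delta$: one simply asks each branch to close up on the previous partial tree without making the large detour into $D_2\setminus D_1$ that a generic $D_2$-branch might make. On $\tilde E^\d$ the first $n$ branches of $\cT_2$ restricted to $\tilde D_\eta$ agree with what Wilson's algorithm in $D_1$ would produce from the same trajectories (the loops erased after the hitting time stay local by Beurling, exactly as in \cref{lem:subtree}), and then the finiteness theorem transfers this to the restriction to $U$. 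Letting $\tilde\nu_1 := \nu_2(\,\cdot\mid \tilde E^\d)$ restricted to $U$, we get $d_{TV}(\tilde\nu_1,\nu_1)\le\epsilon$: indeed the coupling just described realizes $\mu_2(\,\cdot\mid\tilde E^\d)$ and $\mu_1$ on a common probability space with $\cT_1 1_U = \cT_2 1_U$ off an event of probability $\epsilon$.

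For the Radon--Nikodym bound, write $\tilde\nu_1 = \nu_2(\,\cdot\mid \tilde E^\d)$ and note $\frac{d\tilde\nu_1}{d\nu_1}$ is controlled as follows. We have already shown $d_{TV}(\tilde\nu_1,\nu_1)\le\epsilon$; combined with the fact, from the previous section, that $d_{TV}(\nu_1,\nu_2(\cdot\mid E^\d))\le\epsilon$ and $\mu_2(E^\d)\ge p$ — actually the cleanest route is to argue symmetrically: the \emph{same} coupling shows $\tilde\nu_1$ is obtained from $\nu_1$ by conditioning on an event of $\mu_1$-probability bounded below. Precisely, running the coupling "from the $D_1$ side", on the event that the first $n$ Wilson branches in $D_1$ stay in $\tilde D_\eta$ and close up on the previous tree within small balls (probability $\ge p'$ by \cref{thm:finitude} and \cref{beurling}), the same trajectories produce a configuration in $D_2$ lying in $\tilde E^\d$; hence $\tilde\nu_1 = \nu_1(\,\cdot\mid F^\d)$ on $U$ for an event $F^\d$ with $\mu_1(F^\d)\ge p'$, which gives $\frac{d\tilde\nu_1}{d\nu_1}\le 1/p'$ directly. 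I expect the main obstacle to be making the matching-of-hitting-points precise enough that the conditioned $D_2$-tree, restricted to $\tilde D_\eta$, genuinely coincides with a Wilson-algorithm output in $D_1$ — one must be careful that conditioning on $\tilde E^\d$ does not distort the loops erased within $\tilde D_\eta$, which is handled by the same "erased loops stay local" argument as \cref{lem:subtree} together with \cref{walk_given_erasure}, but the bookkeeping across $n$ branches and the interplay with the finiteness theorem requires care.
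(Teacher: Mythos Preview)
Your proposal has a fundamental gap: it targets the wrong inequality. The proposition requires $d_{TV}(\nu_2,\tilde\nu_1)\le\epsilon$, i.e.\ $\tilde\nu_1$ must be close to the \emph{large-domain} marginal $\nu_2$, while having bounded density with respect to $\nu_1$. Your construction gives $\tilde\nu_1 = \nu_2(\cdot\mid\tilde E^\d)$ and you argue that this is close to $\nu_1$ in total variation; you then observe it can also be written as $\nu_1(\cdot\mid F^\d)$ for some $F^\d$ of $\mu_1$-probability $\ge p'$, giving the Radon--Nikodym bound. But conditioning $\nu_2$ on an event of probability merely bounded below by $p$ --- not close to $1$ --- moves you \emph{away} from $\nu_2$; you never establish $d_{TV}(\nu_2,\tilde\nu_1)\le\epsilon$, and in general it is false. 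What you have outlined is an alternative route to \cref{prop:upper_bound_coupling}, not to \cref{prop:lower_bound_coupling}. There is no hope of making $\tilde E^\d$ a high-probability event either: a typical first Wilson branch in $D_2$ must reach $\partial D_2$ and will make excursions into $D_2\setminus D_1$, and these excursions can genuinely affect the loop-erasure back inside $U$ (see \cref{fig:uncloupledLE}), so forbidding them changes the law on $U$.

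The paper explicitly flags this obstruction: ``This case is however significantly more involved\ldots\ $\tilde\nu_1$ is not exactly obtained by a conditioning.'' Instead the paper keeps the $D_2$-branch $Y_2$ untouched (so one stays close to $\nu_2$) and constructs by hand a walk $\tilde X_1$ in $D_1$ whose loop-erasure agrees with $Y_2$ on a neighbourhood $U_1\supset U$: inside a larger neighbourhood $U_3$ it copies verbatim a walk $\tilde X_2$ with loop-erasure $Y_2$, while each excursion of $\tilde X_2$ out of $U_3$ is replaced by a conditioned random walk in $D_1$ that follows $\phi(\tilde X_2)$ and re-enters $U_2$ at the same point, where $\phi:D_2\to D_1$ is a fixed diffeomorphism equal to the identity near $U$. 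Two further ideas are essential and absent from your outline: a bespoke ``mixed'' loop-erasing procedure (\cref{def:mixedLE}, \cref{cor:loop-reversal}) local enough that the erasures of $\tilde X_1$ and $\tilde X_2$ agree wherever the walks agree, and a loop-soup trick (\cref{walk_given_erasure} together with \cref{loop_mass}) conditioning the loops decorating $Y_2$ to all have diameter $\le r$, which kills large-scale erasures that could otherwise decouple the two loop-erasures.
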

We think of this proposition as an analogue of \cref{prop:upper_bound_coupling} because we see  $\tilde \nu_1$ as a kind of conditioned version of $\nu_1$, with a conditioning event of probability at least $p$. This case is however significantly more involved than the previous one so $\tilde \nu_1$ is not exactly obtained by a conditioning.

Let us highlight the main steps of the proof. As before, we first focus on a single branch. The challenge compared to the previous section is that is no longer possible to simply turn the larger domain into a close approximation of the smaller one by an appropriate event. Instead, given a random walk in the large domain $D_2$, we want to create a path in $D_1$ which agrees with the first one on $U$ (so that their loop-erasure match) and reproduces the same topology with its excursions outside of $U$ (so that any erasure coming from the excursions outside of $U$ matches).

\begin{figure}[ht]
	\begin{center}
		\includegraphics[width=.5\textwidth]{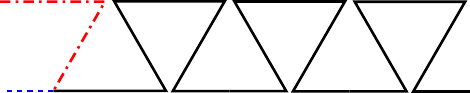}
		\caption{An example where two coupled trajectories lead to different loop-erasures. Consider the two paths obtained by following one of dashed colored lines followed by the continuous black line. The points where the paths come very close to itself should be though as actual intersection and are only drawn with a small gap for readability. The loop erasures of the path starting red is the upper straight line while for the blue path it is the bottom straight line. Note that in both case, all erased loops are bounded.}
		\label{fig:uncloupledLE}
	\end{center}
\end{figure}

The first step is to clarify how to go from a coupling of two random walks to their loop-erasures (see \cref{fig:uncloupledLE} for why this is non-trivial). Here the main idea is to introduce a loop-erasing procedure with both forward and backward aspects, designed to ensure that the loop-erasure are obtained ``locally'' from the random walks and must therefore agree whenever the walks agree.

The second step is actually a trick to get rid of all the topological complexity from possible large scale loop erasure : By \cref{walk_given_erasure}, the conditional law of a random walk $X$ given that its loop erasure is some simple path $\gamma$ is obtained by adding to $\gamma$ loops sampled from an independent loop soup.
By \cref{loop_mass}, with positive probability all these added loops are small. On that even all random walks are guarantied to remain within a small distance of their erasures and no large scale complexity can happen from the erasing procedure.

Finally, as in \cref{sec:upper_bound}, we will focus on the first loop-erased walk in Wilson's algorithm. The proof for any finite number will be similar and an application of Schramm's finiteness theorem will conclude the proof by going from a finite number of branches to the full tree.

\subsection{Mixed Loop-erasing}\label{sec:mixed_erasing}

Denote by $\fLE(X)$ and $\bLE(X)$ the forward and backward loop-erasure of a path $X$ and by $\oplus$ the concatenation of paths.

\begin{defin}\label{def:mixedLE}
	Let $X = (X_t)_{0 \leq t \leq T_{i_{\max}}}$ and $(T_i)_{1 \leq i \leq i_{max}}$ be respectively a path and a sequence of times with $T_i < T_{i+1}$ almost surely. We define the mixed loop erasure of $X$ with respect to the sequence $T_i$ by the following inductive procedure (see \cref{fig:mixedLE} for an illustration):
	\begin{itemize}
		\item We define $Y_1 := \bLE(X[ 0, T_1] )$.
		\item Given $Y_i$ a simple path, let $s_i = \inf \{ s : Y_i(s) \in X[T_i, T_{i+1}]\}$ and let $t_i = \max \{ t \in [T_i, T_{i+1}] : X(t) = Y(s_i)\}$. In other words, $Y_i(s_i)$ is the first point where $Y_i$ intersect the trajectory of $X$ and $t_i$ is the (last) time in $X$ where this intersection occurs.
		\item We set $Y_{i+1}:= Y_i[0, s_i] \oplus \bLE(X[t_i, T_{i+1}] )$.
		\item we define the mixed loop-erasure to be the final simple path $Y_{i_{\max}}$.
		\end{itemize}
\end{defin}

\begin{figure}[ht]
	\begin{center}
		\hspace{.05\textwidth}
		\includegraphics[width=.3\textwidth]{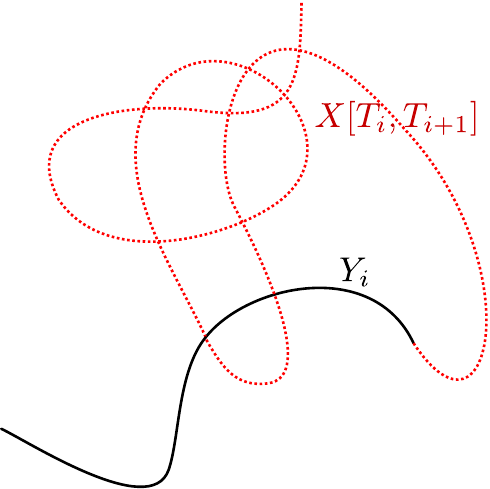}
		\includegraphics[width=.3\textwidth]{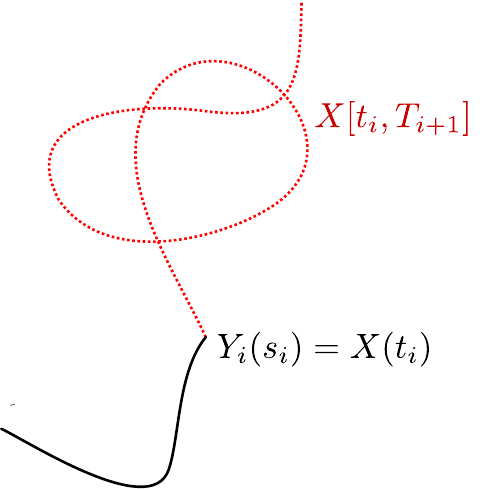}
		\includegraphics[width=.3\textwidth]{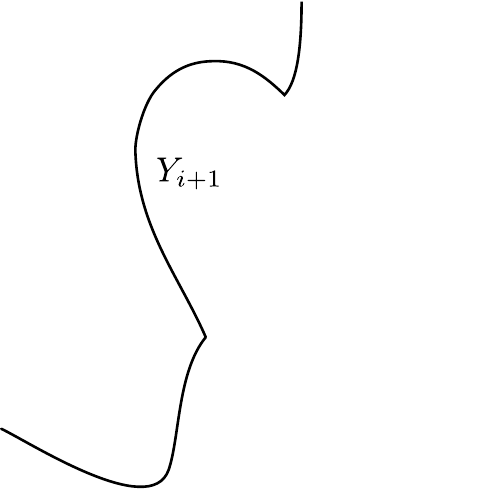}
		\caption{One step in the mixed loop-erasing procedure. Left : the ``current'' loop erasure $Y_i$ in dark complete line and the additional portion of walk $X[t_i, T_{i+1}]$ in dashed red. Center : $Y[0, s_i]$ and $X[t_i, T_{i+1}]$; in a sense the forward loop from $Y_i(s_i)$ to $X(t_i)$ has been erased. Right : $Y_{i+1}$ obtained by doing a reverse loop-erasure. Note that in this example, the output of the mixed loop erasure is different from either the forward or backward one.}
		\label{fig:mixedLE}
	\end{center}
\end{figure}

It is well known that the forward and backward loop-erasure of a random walk have the same law. For the mixed loop-erasing, we cannot give such a general statement but we nonetheless have  the following.
\begin{lem}
	Let $G$ denote a graph with a boundary $\partial G$ and let $X$ denote a random walk stopped when it first hits $\partial G$. Let $E_i$ be a sequence of subsets of $G$, let $T_i$ be the sequence of stopping times defined by $T_{i+1} = \inf\{ t > T_i : X_t \in E_{i+1} \cup \partial G \}$ and let $i_{\max}$ be the first $i$ such that $T_i \in \partial G$, i.e $T_{i_{\max}} = \inf \{ t : X_t \in \partial G\}$ is the time where we stop the walk. The mixed loop erasure of $X$ with respect to the times $T_i$ of $X$ has the law of the (forward) loop erasure of $X$.
\end{lem}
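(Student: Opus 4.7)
The approach would be induction on the number of sets $E_i$. The base case corresponds to no sets $E_i$, in which case $i_{\max}=1$ and the mixed loop erasure is simply $\bLE(X)$; this has the same law as $\fLE(X)$ by the classical reversibility identity for loop erasure, established via the standard loop-decomposition bijection. For the inductive step with $n\ge 1$ sets, I would first split according to whether $X$ hits $E_1$ before $\partial G$: on the complement we are back in the base case, and on the event $\{i_{\max}>1\}$ I would apply the strong Markov property at $T_1$, conditioning on $X[0,T_1]$ and on $v_1:=X(T_1)\in E_1$, so that $X[T_1,T_{i_{\max}}]$ becomes an independent random walk from $v_1$ stopped at $\partial G$.

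The core of the argument is then a single \emph{merging statement}: given a simple path $\gamma$ distributed as $\bLE$ of a random walk from $x_0$ to $v_1$, and given an independent random walk $X'$ from $v_1$ to $\partial G$, the path
\[
\gamma[0,s_1]\oplus\bLE(X'[t_1,T']),
\]
with $s_1$ the first time $\gamma$ meets $X'$ and $t_1$ the last time of this meeting point in $X'$, has the same law as $\bLE$ of the concatenated walk. To establish this merging statement I would invoke \cref{walk_given_erasure} and \cref{cons:walk_given_erasure}: conditionally on $\gamma$, the first walk is reconstructed by attaching to $\gamma$ an independent loop soup of loops meeting $\gamma$. This re-expresses the joint law of the two walks as the triple $(\gamma,\text{attached loops},X')$ with independent ingredients, making the whole analysis explicit. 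Once the merging statement holds, it can be iterated (or, equivalently, combined with the inductive hypothesis applied to the tail walk with $n-1$ sets) to conclude.

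The main obstacle is the combinatorial matching at the stitch. One has to verify that the ``macroscopic loop'' discarded when we truncate $\gamma$ at $s_1$ and $X'$ at $t_1$---that is, $\gamma[s_1,\text{end}]$ concatenated with $X'[T_1,t_1]$, based at $\gamma(s_1)$---together with the loops removed inside $\bLE(X'[t_1,T'])$ and the loops carried by $\gamma$ from the loop soup, reassemble exactly (in law) into the loops that a single-stage backward loop erasure on the full walk would erase. The choice of $s_1$ as the \emph{first} intersection along $\gamma$ seen from $x_0$ and of $t_1$ as the \emph{last} visit of $\gamma(s_1)$ in $X'$ are tailored precisely to make this reassembly valid, and performing this identification rigorously is where the technical content of the argument lies.
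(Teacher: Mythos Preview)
Your induction skeleton is fine, but the core of your proof --- the ``merging statement'' --- is left as an acknowledged gap, and the loop-soup route you sketch to fill it is both unnecessary and not clearly sufficient. You want to show that $\gamma[0,s_1]\oplus\bLE(X'[t_1,T'])$ has the law of the loop erasure of the concatenated walk, and you propose to do this by reassembling erased loops into a loop soup. That reassembly is delicate (you say so yourself), and the loop-soup description of \cref{walk_given_erasure} is tailored to the \emph{forward} erasure; making it interact correctly with the backward erasure on $X'[t_1,T']$ would amount to redoing Lawler's reversibility bijection from scratch.

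The paper's argument avoids all of this via one deterministic observation that your proposal is missing: forward loop erasure already has the recursive structure of \cref{def:mixedLE}. Concretely, if $\gamma=\fLE(X[0,T_i])$ then
\[
\fLE(X[0,T_{i+1}])=\gamma[0,s_i]\oplus\fLE(X[t_i,T_{i+1}])
\]
holds \emph{pathwise}, because $s_i$ is the first point along $\gamma$ hit by $X[T_i,T_{i+1}]$ and $t_i$ its last visit. Comparing with $Y_{i+1}=\gamma[0,s_i]\oplus\bLE(X[t_i,T_{i+1}])$, the whole question reduces to $\bLE\overset{\text{law}}{=}\fLE$ on the single excursion $X[t_i,T_{i+1}]$, which conditionally on $\gamma$ and $s_i$ is a random walk from $\gamma(s_i)$ conditioned to hit $E_{i+1}\cup\partial G$ before returning to $\gamma[0,s_i]$; this is exactly the content of Lawler's Lemma~7.2.1. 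So the induction (on $i$, equivalently on the number of sets) closes in one line once you invoke the classical identity on the right piece, with no loop-soup bookkeeping needed.
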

\begin{proof}
	We claim that the sequence of paths $( Y_i)_{0 \leq i \leq i_{\max}}$ is equal in law the the sequence $(\fLE( X[0, T_i]))_{0 \leq i \leq i_{\max}}$. Indeed suppose by induction that this is true for some $i$, fix some path $\gamma$, and let us compare the law of $Y_{i+1}$ given $Y_i=\gamma$ to the law of $\fLE( X[0, T_{i+1}])$ given $\fLE( X[0, T_{i}])=\gamma$. 
		Also define $s_i = \inf \{ s : \gamma(s) \in X[T_i, T_{i+1}]\}$ and $t_i = \max \{ t \in [T_i, T_{i+1}] : X(t) = \gamma(s_i)\}$ following the notation from \cref{def:mixedLE}. 
	
	Note that $\fLE( X[0, T_{i+1}]) = \gamma[0,s_i] \oplus \fLE( X[t_i, T_{i+1}])$ and that, conditionally on $s_i$, $X[t_i, T_{i+1}]$ is a random walk conditioned to hit $E_{i+1}$ before $\gamma$. By Lemma 7.2.1 in \cite{Lawler2012}, there is a measure-preserving bijection $\varphi_i$ on such excursions paths satisfying 
	\[
	\bLE( \varphi_i( X[t_i, T_{i+1}])) = \fLE( X[t_i, T_{i+1}]).
	\] Note that this includes the fact that $t_i$ and $T_{i+1}$ are invariant under $\varphi_i$. In particular
	\begin{multline*}
	Y_{i+1} := \gamma[0, s_i] \oplus \bLE( X[t_i, T_{i+1}])  \overset{\text{law}}{=}  \gamma[0, s_i] \oplus \fLE( X[t_i, T_{i+1}])  = \fLE( X[0, T_{i+1}]).
	\end{multline*}
\end{proof}

From the proof, we also have the following corollary
\begin{corollary}\label{cor:loop-reversal}
	Given sets $E_i$ (with associated hitting times $T_i$) as above, there exists a bijective, measure preserving, map $\varphi$ from the set of random walks trajectory to itself such that the forward loop-erasure of $X$ and the mixed loop-erasure of $\varphi(X)$ with respect to the times $T_i$ are identical.
\end{corollary}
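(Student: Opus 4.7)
The plan is to unfold the proof of the previous lemma pathwise. There, the bijections $\varphi_i$ from Lemma 7.2.1 of \cite{Lawler2012} were used only to obtain a distributional equality, but they are in fact deterministic, measure-preserving maps on the corresponding excursion spaces; splicing them together along the natural excursion decomposition should yield the single measure-preserving bijection $\varphi$ claimed in the corollary.

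Concretely, given $X$ and the times $T_1 < \cdots < T_{i_{\max}}$, I would compute inductively $\gamma_i := \fLE(X[0, T_i])$. For $i = 1$, set the first piece of $\varphi(X)$ to be $\varphi_1(X[0, T_1])$, where $\varphi_1$ is Lawler's bijection on random walks stopped at their first hit of $E_1 \cup \partial G$, chosen so that $\bLE(\varphi_1(\cdot)) = \fLE(\cdot)$. For $i \geq 2$, letting $s_{i-1} = \inf\{s : \gamma_{i-1}(s) \in X[T_{i-1}, T_i]\}$ and $t_{i-1} = \max\{t \in [T_{i-1}, T_i] : X(t) = \gamma_{i-1}(s_{i-1})\}$, the excursion $X[t_{i-1}, T_i]$ is, conditionally on $\gamma_{i-1}$, a random walk conditioned to hit $E_i \cup \partial G$ before $\gamma_{i-1}$, so Lawler's bijection $\varphi_i$ again applies. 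I then define $\varphi(X)$ to coincide with $X$ on each interval $[T_{i-1}, t_{i-1}]$ and with $\varphi_i(X[t_{i-1}, T_i])$ on each interval $[t_{i-1}, T_i]$.

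To verify the identity, I would prove by induction on $i$ that the intermediate path $Y_i$ produced by the mixed loop-erasure of $\varphi(X)$ equals $\gamma_i$. The base case $Y_1 = \bLE(\varphi_1(X[0, T_1])) = \fLE(X[0, T_1]) = \gamma_1$ is immediate. For the inductive step, since $\varphi(X)$ coincides with $X$ on $[T_{i-1}, t_{i-1}]$ and $Y_{i-1} = \gamma_{i-1}$ by induction, the first point and time at which $\varphi(X)$ meets $Y_{i-1}$ inside $[T_{i-1}, T_i]$ are precisely $\gamma_{i-1}(s_{i-1})$ and $t_{i-1}$. Hence the quantities produced in \cref{def:mixedLE} agree with those used to construct $\varphi$, and $Y_i = \gamma_{i-1}[0, s_{i-1}] \oplus \bLE(\varphi_i(X[t_{i-1}, T_i])) = \gamma_{i-1}[0, s_{i-1}] \oplus \fLE(X[t_{i-1}, T_i]) = \gamma_i$.

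Measure-preservation of $\varphi$ would then follow from the fact that each $\varphi_i$ is measure-preserving on its excursion space, combined with the strong Markov property at the stopping times $T_{i-1}$ and $t_{i-1}$. Bijectivity is obtained by running the construction in reverse: from $\tilde X = \varphi(X)$, the mixed procedure identifies the same cutting times, and each $\varphi_i$ is individually invertible. I expect the main (minor) obstacle to be bookkeeping for measurability, in particular checking that $t_{i-1}$ is measurable in the natural filtration so that the piecewise gluing defines a measurable map on the whole trajectory space; this is not a real difficulty since the $T_i$ are stopping times for $X$ and $t_{i-1}$ is a deterministic function of $X[0, T_i]$.
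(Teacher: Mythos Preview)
Your proposal is correct and is exactly the unfolding that the paper has in mind: the corollary is stated there as immediate ``from the proof'' of the preceding lemma, and the intended $\varphi$ is precisely the splicing of the Lawler bijections $\varphi_i$ along the excursion decomposition, as you describe.

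One small point worth making explicit in your inductive step: to conclude that the quantities $s_{i-1}$ and $t_{i-1}$ computed for $\varphi(X)$ in \cref{def:mixedLE} coincide with those for $X$, you need that $\varphi_i(X[t_{i-1},T_i])$ does not touch $\gamma_{i-1}[0,s_{i-1})$ and does not revisit $\gamma_{i-1}(s_{i-1})$ after its starting point. This follows because Lawler's bijection preserves the multiset of visited vertices (it only rearranges the order of traversal), but your sentence ``since $\varphi(X)$ coincides with $X$ on $[T_{i-1},t_{i-1}]$'' alone does not quite justify it. The same range-preservation also ensures that the hitting times $T_i$ are unchanged under $\varphi$, which you need in order for the mixed procedure on $\varphi(X)$ to use the same time grid.
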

With a slight abuse of terminology, we will speak in the following of the mixed loop-erasure with respect to a sequence of sets in the above context.

\subsection{RW in $D_2$}\label{sec:RWD2}

In this section, as mentioned in the beginning of the section, we turn to the construction of a RW path $\tilde X_2$ whose mixed loop-erasure has exactly the law of a branch of UST in $D_2$ and yet never creates any large loop. As an aside on notations, we will use unmarked letters to describe objects with the original law of random walk/UST/\ldots and a tilde to denote biased versions used to construct the measure $\tilde \nu_1$. Without loss of generality, we will start our walk at $0$ to simplify notations.

From now on we fix three open connected sets $U_1, U_2, U_3$ with $\overline{U} \subset U_1$, $\overline{U_i} \subset U_{i+1}$ and $\overline U_3 \subset D_1 $ and we let $E_i$ denote the alternating sequence of sets $E_{2i} = U_2^\d$ and $E_{2i+1} = D_1 \setminus U_3^\d$. Fix $r$ to be chosen small enough later (in particular so that the boundaries of $U, U_1, U_2, U_3, D_1, D_2$ are all at distance more than $r$) and we consider the event $G = G_r$ that all loops in the loop soup  $(\ell_k)_{k \geq 1}$ have diameter at most $r$.
Let $Y_2$ be a loop-erased random walk from $0$ in $D_2^\d$, we define $X_2$ and $\tilde X_2$ by sampling respectively an unbiased loop soup or one conditioned on $G_r$ independent from $Y_2$, then applying \cref{cons:walk_given_erasure} and finally applying the bijection of \cref{cor:loop-reversal} relative to the sequence of sets $E_1$.

\begin{lem}\label{lem:radonX2}
	$Y_2$ is the mixed loop erasures of both $X_2$ and $\tilde X_2$ with respect to the sets $E_i$. Furthermore the pair $(X_2, Y_2)$ has the law of a random walk and its loop-erasure. Finally there exists $C$ depending only on $D_2$ such that for all $\delta$ small enough and for all paths $\chi$ on $D_2^\d$
	\[
	\P( \tilde X^2 = \chi | Y_2 ) \leq C \P( X^2 = \chi | Y_2).
	\]
\end{lem}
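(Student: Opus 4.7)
The plan is to realise both $X_2$ and $\tilde X_2$ as essentially deterministic functions of $Y_2$ together with a single loop soup on $D_2^\d$, so that the comparison reduces to comparing biased and unbiased versions of the same soup. Concretely, I will write $Z$ (resp.\ $\tilde Z$) for the output of \cref{cons:walk_given_erasure} applied to $Y_2$ and an independent unbiased loop soup $L$ (resp.\ to $\tilde L = (L \mid G_r)$), and set $X_2 = \varphi(Z)$, $\tilde X_2 = \varphi(\tilde Z)$, where $\varphi$ is the measure-preserving bijection from \cref{cor:loop-reversal} relative to the sequence $E_i$.

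The first two assertions follow almost for free from this setup. By \cref{walk_given_erasure}, and since $Y_2$ itself is a LERW, the pair $(Z, Y_2)$ has the joint law of a random walk in $D_2^\d$ and its forward loop-erasure. Because $\varphi$ preserves the RW measure and converts forward loop-erasures into mixed loop-erasures with respect to the $E_i$, we get that $X_2$ has the law of a random walk and that its mixed loop-erasure is exactly $Y_2$; the same construction applied to $\tilde Z$ gives the analogous statement for $\tilde X_2$ (with $\tilde X_2$ no longer a random walk, but with the same deterministic relation to $Y_2$ via mixed loop-erasure).

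The main step, and the one I expect to carry the conceptual weight, is a Poissonian factorisation argument underlying the Radon--Nikodym bound. The output of \cref{cons:walk_given_erasure} only uses the loops of the soup that actually intersect $\gamma := Y_2$, so the conditional law of $X_2$ (and $\tilde X_2$) given $Y_2$ depends only on the restricted soup $L^\gamma := \{\ell \in L : \ell \cap \gamma \neq \emptyset\}$. By the Poisson point process property, conditionally on $Y_2$ the sets $L^\gamma$ and $L \setminus L^\gamma$ are independent, and the event $G_r$ factors as $G_r^\gamma \cap G_r^{\gamma,c}$, where $G_r^\gamma$ requires all loops of $L^\gamma$ to have diameter at most $r$. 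By independence, the conditional law of $L^\gamma$ under $\tilde L$ is its unbiased law conditioned on $G_r^\gamma$, which immediately gives, for any $\chi$,
\[
\frac{\P(\tilde X_2 = \chi \mid Y_2)}{\P(X_2 = \chi \mid Y_2)} \leq \frac{1}{\P(G_r^\gamma \mid Y_2)}.
\]

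To finish I would dominate $\P(G_r^\gamma \mid Y_2)$ from below by the (larger) probability that $L$ contains no loop of $D_2^\d$ of diameter greater than $r$ at all. By a finite cover of $D_2$ combined with \cref{loop_mass}, the total rooted loop mass of such loops is bounded by a constant $C_0$ depending only on $D_2$ and on the (fixed) parameter $r$. The number of such loops in $L$ is then Poisson with parameter at most $C_0$, and its vanishing probability is at least $e^{-C_0}$, so $C = e^{C_0}$ works. All other steps are essentially bookkeeping around the definitions of $\varphi$ and of Construction \ref{cons:walk_given_erasure}; the only point requiring real care is the factorisation step, where one must check that conditioning on $G_r$ does not introduce any hidden dependence between $L^\gamma$ and $L \setminus L^\gamma$ beyond the product structure of $G_r$.
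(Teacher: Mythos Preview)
Your proof is correct and follows essentially the same approach as the paper, whose own argument is a terse two-liner invoking \cref{loop_mass} to bound $\P(G_r)$ from below uniformly in $\delta$. Your factorisation of the loop soup into $L^\gamma$ and $L\setminus L^\gamma$ is correct but unnecessary: since the loop soup is sampled independently of $Y_2$, the elementary inequality $\P(A\mid G_r)\le \P(A)/\P(G_r)$ already yields the bound with $C=1/\P(G_r)$ directly, without tracking which loops the construction actually uses.
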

\begin{proof}
	The first two points are trivial by construction and because the map from \cref{cor:loop-reversal} is a measure preserving bijection. The last point is because by \cref{loop_mass}, the even $G_r$ has a strictly positive probability independently of $\delta$.
\end{proof}

Finally, as mentioned above the reason why we introduce the conditioning by $G_r$ is to get rid of possible large scale erasures. For this we want to say that in a sense the $r$ neighbourhood of $\chi$ is ``simple'' with high probability.
\begin{defin}[Definition 3.3 in \cite{Schramm2000}]
	An $r, R$-quasiloop of a path $\chi$ is a pair $a, b$ such that $\abs{\chi(a) - \chi(b)} \leq r$ but $\diam( \chi[a, b]) \ge R$.
\end{defin}

\begin{lem}[Lemma 3.4 in \cite{Schramm2000}]\label{quasi_loop}
	For all sequence of starting points, for all $R > 0$
	\[
	\lim_{r \to 0} \limsup_{\delta \to 0} \P( Y \text{ has an $r,R$-quasiloop}) = 0.
	\]
\end{lem}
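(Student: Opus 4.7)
This is Schramm's quasi-loop lemma (Lemma 3.4 in \cite{Schramm2000}); I would follow the structure of his argument, using only the tools already set up in the paper: Beurling's estimate \cref{beurling} and Wilson's algorithm.

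I would start with a covering reduction. Cover the bounded region containing $Y$ by a net of balls of radius $r/2$; the net has cardinality $O(r^{-2})$. If $Y$ has an $r, R$-quasi-loop at times $(a,b)$, then $Y(a)$ and $Y(b)$ lie in a common ball $B(z, r)$ of the net while $\diam Y[a, b] \ge R$. By a union bound it therefore suffices to control, uniformly in $\delta$, the probability $p(z, r)$ that $Y$ visits $B(z, r)$, leaves $B(z, R/2)$, and then returns to $B(z, r)$, and then to check that the sum of the $p(z,r)$ over the net vanishes as $r \to 0$.

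The core estimate is a conditional Beurling bound. The delicate point is that LERW does not have a naive Markov property at hitting times of $B(z,r)$, because loops may be erased across such times. I would circumvent this using the two-sided description of LERW through Wilson's algorithm: decompose $Y$ at its last visit $\tau$ to $B(z, r)$. The past of $Y$ up to $\tau$ is a simple curve $\sigma$, and conditionally on $\sigma$ the future of $Y$ has the law of the loop-erasure of a random walk from $Y(\tau)$ conditioned to avoid $\sigma$ until reaching $\partial D$ (this is the reversibility of LERW, or equivalently obtained by generating $Y$ by Wilson starting from a boundary point toward $0$). On the quasi-loop event, $\sigma$ reaches distance at least $R/2$ from $Y(\tau)$, so \cref{beurling} forces the probability that the future walk escapes $B(z, R/2)$ without hitting $\sigma$ — a necessary condition for the quasi-loop to be realized — to be at most $(r/R)^\alpha$.

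The main obstacle, and where Schramm's argument is genuinely subtle, is balancing this $(r/R)^\alpha$ decay against the cardinality $O(r^{-2})$ of the covering net: a naive union bound does not close for $\alpha$ small. The fix is to pair the Beurling factor with an additional factor coming from the probability that $Y$ reaches the small ball $B(z, r)$ at all, which itself decays in $r$ (as one sees by applying \cref{beurling} to the initial random walk generating $Y$, against the already-constructed portion of $Y$ from the opposite direction). The combination yields a bound of size $o(r^2)$ per ball, and therefore $o(1)$ after summation over the net. Because every input estimate (Beurling, Wilson's algorithm, loop-soup reversal) is itself $\delta$-uniform under the standing assumptions of \cref{sec:assumptions}, the double limit $\limsup_\delta$ followed by $r \to 0$ presents no further difficulty, yielding the claim.
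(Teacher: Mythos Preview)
The paper does not provide its own proof of this lemma; it simply quotes the result from \cite{Schramm2000}. So there is no in-paper argument to compare your sketch against beyond the bare citation.

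On the sketch itself, the ingredients you invoke (covering net, domain Markov for LERW, Beurling) are the right ones, but there are two issues.

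First, the decomposition at the last visit $\tau$ is mis-stated. Reversibility gives you the conditional law of $Y[0,\tau]$ given $Y[\tau,\mathrm{end}]$, not the other way around; alternatively, conditioning on $Y[0,\tau]=\sigma$ via the forward domain Markov property yields a LERW in $D\setminus\sigma$ \emph{further conditioned not to re-enter $B(z,r)$}. Either way, the quasi-loop is a feature of $\sigma$ itself, so ``the future walk escaping $B(z,R/2)$ without hitting $\sigma$'' is not a condition whose failure prevents the quasi-loop --- it is automatic for the conditioned walk. The Beurling factor has to be extracted differently (it enters through the escape probability weighting the event $\{Y[0,\tau]=\sigma\}$, not through the conditional law of the future).

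Second, and more seriously, your closure of the union bound does not work as written. You claim that pairing the Beurling factor $(r/R)^\alpha$ with ``the probability that $Y$ reaches $B(z,r)$ at all'' yields $o(r^2)$ per ball. In the generality of \cref{sec:assumptions} the exponent $\alpha$ in \cref{beurling} is an unspecified positive constant; even on $\delta\Z^2$ one has only $\alpha=1/2$, while the LERW hitting probability of an $r$-ball decays no faster than $r^{3/4}$, so the product is at best of order $r^{5/4}$, not $o(r^2)$. Your appeal to ``\cref{beurling} applied to the initial random walk against the already-constructed portion of $Y$ from the opposite direction'' does not describe a valid second application of Beurling that would supply the missing decay. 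Schramm's actual mechanism for beating the net cardinality is different from the one you outline, and in the generality of this paper one would in any case want to point to an argument that uses only the non-quantitative \cref{beurling}.
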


\subsection{RW in $D_1$}\label{sec:RWD1}

In this section we define the walk in $D_1$ and control that its law is not too far from that of a simple random walk. Note that the construction will depend on $U_1, U_2, U_3$ which were previously defined, the distance $r$ that was used to define $X_2$ and an integer $N$. We also fix a diffeomorphism $\phi$ from $D_2$ to $D_1$ such that the restriction of $\phi$ to $U_3$ is the identity.

Define a sequence of times $T_i = T_i^2$ inductively by $T_0 = 0$ and $T_{2i+1} = \inf \{ t \geq T_{2i} , X^2(t) \not \in U_3 \}$, $T_{2i} = \inf\{ t \geq T_{2i-1} : X^2(t) \in U_2 \}$.  We also let $T_{\max}$ denote the hitting time of $\partial D_2$ by $X^2$ and we let $i_{\max} = \max \{ i : T_{i} < T_{\max} \}$  i.e $i_{\max}$ is the last time in the sequence where we are on $\partial U_3$. We will denote by $T^1$ and $T^Y$ the corresponding set of times for either $X^1$ (when it will be defined) and $Y_2$.

We say that the path $Y_2$ is \emph{$\epsilon$-good} is it satisfies the following:
\begin{itemize}
	\item It does not have any $(\epsilon^2,\epsilon)$-quasiloop;
	\item $i_{\max}^Y \leq 1/\epsilon$;
	\item For every $i$, let $T_{2i^-}^Y$ be the first time after $T_{2i-1}^Y$ where $Y$ comes within distance $\epsilon^2$ of $U_2$, we want that $Y[T_{2i^-}, T_{2i}] \subset B( Y(T_{2i^-}), \epsilon)$.
\end{itemize}
Combining the probability to see quasi-loops from \cref{quasi_loop}, the Beurling estimate from \cref{beurling} and the fact that by uniform crossing $i_{\max}$ has an exponential tail, we see that the probability that $Y_2$ is $\epsilon$-good goes to $1$ as $\epsilon \to 0$, uniformly over the mesh size $\delta$.

We define the path $\tilde X^1$ conditionally on $\tilde X^2$ (and therefore $Y^2$) by concatenating different pieces of random walk and recall that we denote by $\tilde X[t, t']$ the subpath between times $t$ and $t'$.
\begin{itemize}
	\item If $Y_2$ is not $\epsilon$-good, then we just let $\tilde X^1$ be a random walk independent of everything else. From now on assume that $Y_2$ is $\epsilon$-good and for ease of notation assume that $i_{\max} \geq 3$.
	\item  We set $\tilde X^1[0, T_1^1] := \tilde X^2[0, T_1^2]$.
	\item  We choose a sequence of rectangle following $\phi(\tilde X_2[T_1^2, T_2^2])$ up to a precision $r$ (see \cref{sec:assumptions} ) and such that only the last one comes within distance $\epsilon^2/2$ of $U_2$. We let $\tilde X^1[T_1^1, T_2^1]$ be a random walk started from $\tilde X^1(T_1^1) = \tilde X^2(T_1^2)$, conditioned to first hit $U_2$ at the position $\tilde X^2(T_2^2)$ and to cross all the rectangles in the above sequence in order.
	\item We set $\tilde X^1( T_2^1, T_3^1) := \tilde X^2( T_2^2, T_3^2)$ which is possible since by construction $\tilde X_2(T^2_2) = \tilde X^1(T_2^1)$.
	\item We iterate the construction up to $i_{\max}$. From $T_{i\max}$ we complete $\tilde X^1$ by a random walk following $\phi(\tilde X_2[T_{i_{\max}}, T_{\max}])$ and exiting $D_1$. 
\end{itemize}

\begin{lem}\label{lem:RN_X1}
	For all $\epsilon > 0$ and $r >0$ such that $r \leq \epsilon^2$, there exists some $C > 0$ such that for all $\chi$
	\[
	\P(\tilde X^1 = \chi ) \leq C \P( X^1 = \chi )
	\]
	where $ X^1$ is a random walk stopped when it exits $D_1$.
\end{lem}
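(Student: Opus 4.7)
The plan is to condition on $\tilde X^2$ and control the Radon-Nikodym derivative piece by piece. When $Y_2$ is not $\epsilon$-good, $\tilde X^1$ has exactly the law of $X^1$ and the bound is trivial; from now on we work on the event that $Y_2$ is $\epsilon$-good. Write $\chi$ and any trajectory $\eta$ of $\tilde X^2$ as concatenations of pieces at the alternating stopping times $(T_j)$, with even-indexed pieces $\chi_{2i}, \eta_{2i}$ living inside $U_3$ (between $U_2$ and $\partial U_3$) and odd-indexed pieces outside of $U_2$. By construction, given $\tilde X^2 = \eta$, the conditional law of $\tilde X^1$ charges only paths with $\chi_{2i} = \eta_{2i}$ for all $i$; the outside pieces $\chi_{2i+1}$ are then independent samples of a $D_1$-walk from $\eta(T^2_{2i+1})$ conditioned on crossing the prescribed rectangles $R^i(\eta)$ and exiting $U_3$-complement region at $\eta(T^2_{2i+2})$, an event we call $E^i(\eta)$. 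Factoring $\P(X^2 = \eta)$ and $\P(X^1 = \chi)$ over pieces via the strong Markov property, noting that random walks in $D_1$ and $D_2$ are indistinguishable on paths staying in $U_3 \subset D_1 \cap D_2$, and applying \cref{lem:radonX2}, the inside-piece factors cancel and one gets
\[
\frac{\P(\tilde X^1 = \chi)}{\P(X^1 = \chi)} \leq C \prod_i A_i(\chi), \qquad A_i(\chi) := \sum_{\eta_{2i+1}} \P(X^2_{2i+1} = \eta_{2i+1}) \, \frac{1_{\chi_{2i+1} \in E^i(\eta_{2i+1})}}{\P(W^i \in E^i(\eta_{2i+1}))},
\]
where $W^i$ is an unconditioned $D_1$-walk from $\chi_{2i+1}(0)$ to its first hit of $U_2 \cup \partial D_1$, and the product has at most $i_{\max} \leq 1/\epsilon$ factors by the $\epsilon$-good condition.

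The core step is to bound each $A_i(\chi)$ by an absolute constant. The event $E^i(\eta)$ forces $\eta_{2i+1}$ to have the same endpoints $v_i = \chi_{2i+1}(0) \in \partial U_3$ and $w_i = \chi_{2i+1}(T^1_{2i+2}) \in U_2$ as $\chi_{2i+1}$, both fixed. Applying conditional uniform crossing (\cref{crossing_conditional}) to the rectangles (which can be taken at positive distance from $\partial D_1$ since $\overline{U_3} \subset D_1$) gives
\[
\P(W^i \in E^i(\eta)) \geq p^{n(\eta)}\, \P(W^i \text{ exits at } w_i),
\]
for some universal $p > 0$, where $n(\eta)$ is the number of rectangles in $R^i(\eta)$. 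The critical estimate is that $n(\eta) \leq N = N(r, \epsilon, D_1, D_2)$ uniformly: on the event $G_r$, every loop of $\tilde X^2$ has diameter at most $r$ (\cref{loop_mass}), hence $\eta_{2i+1}$ lies in an $r$-thickening of the corresponding piece of its loop erasure $Y_2$, and the $\epsilon$-good property of $Y_2$ together with the bounded diameter of $D_2$ allows one to choose $\leq N$ rectangles following $Y_2$'s piece up to precision $r$ (rather than following the random walk itself, whose length is unbounded). Summing then over $\eta_{2i+1}$ with fixed endpoints via $\sum \P(X^2_{2i+1} = \eta_{2i+1}) \leq \P_{v_i}(X^2 \text{ exits } D_2 \setminus U_2 \text{ at } w_i)$ yields $A_i(\chi) \leq p^{-N} \, \P_{v_i}^{D_2}(\text{exit at } w_i) / \P_{v_i}^{D_1}(\text{exit at } w_i)$.

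It remains to check that the ratio of harmonic measures $\P_{v_i}^{D_2}(\text{exit at } w_i) / \P_{v_i}^{D_1}(\text{exit at } w_i)$ is bounded by a constant. Decomposing the $D_2$-walk at its first visit to $\partial D_1$, the excess contribution from excursions into $D_2 \setminus D_1$ is, by a Harnack/\cref{lem:harmonic_measure} argument, comparable to the $D_1$-harmonic measure itself: both $v_i$ and $w_i$ lie in a compact subset of $D_1$ at positive distance from $\partial D_1$, so hitting probabilities of $w_i$ from $v_i$ or from any boundary return point are of the same order. Combining the bound on each $A_i(\chi)$ with the bounded number $1/\epsilon$ of factors in the product yields $\P(\tilde X^1 = \chi) \leq C' \P(X^1 = \chi)$. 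The main obstacle is the uniform bound on $n(\eta)$; this is precisely where both the loop-diameter control from $G_r$ (which reduces $\eta_{2i+1}$ to a thickening of a simple curve) and the geometric control on $Y_2$ from $\epsilon$-goodness are essentially used, justifying both the introduction of the loop-size conditioning and of the $\epsilon$-good event in the construction.
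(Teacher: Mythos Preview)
Your argument is correct and follows essentially the same strategy as the paper: decompose at the alternating stopping times, cancel the inside pieces (where $\tilde X^1=\tilde X^2$ and the walks in $D_1$ and $D_2$ coincide), remove the rectangle-crossing conditioning on the outside pieces via \cref{crossing_conditional}, bound the number of rectangles using the $\epsilon$-good property of $Y_2$, pass from $\tilde X^2$ to an honest random walk via \cref{lem:radonX2}, and finally compare the harmonic measures of $w_i$ seen from $v_i$ in $D_1\setminus U_2$ versus $D_2\setminus U_2$ using that both points lie in a fixed compact of $D_1$. The only organisational difference is that you condition directly on the full trajectory $\tilde X^2=\eta$ and factor against $\P(X^2=\eta)$ immediately, whereas the paper first conditions on $Y_2$, removes the rectangle conditioning, and only then integrates $Y_2$ out; both orderings lead to the same product of harmonic-measure ratios.

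One small point: your parenthetical justification ``which can be taken at positive distance from $\partial D_1$ since $\overline{U_3}\subset D_1$'' only controls the endpoints of the odd pieces, not their interior, so it does not by itself place the rectangles away from $\partial D_1$. The paper is equally informal here; the actual reason this works is that the rectangles are chosen following $\phi$ of a curve that stays close to $Y_2$ by the loop-size bound, and one then uses \cref{crossing_conditional} in the domain $D_1\setminus U_2$ with the conditioning on hitting $U_2$ (so only proximity to $\partial U_2$ matters for all but the last rectangle, which is handled by the third clause of the $\epsilon$-good definition).
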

\begin{proof}
	First by construction, if $Y_2$ is not $\epsilon$-good there is nothing to prove so let us analyse $\P( \tilde X_1 = \chi | Y_2)$ in the case where $Y_2$ is $\epsilon$-good. To construct $\tilde X_1$, we need to know $Y_2$ and all the pieces of trajectory $\tilde X^2[T_{2k}, T_{2k+1}]$ but conditionally on that information we sample independent random walks, therefore
\begin{multline*}
	\P (\tilde X_1 = \chi | Y_2 )  =  \P\Big( \forall k < \tfrac{i_{\max}}{2},\,  \tilde X_2[T_{2k}, T_{2k+1}] = \chi[T_{2k}, T_{2k+1}] \Big| Y_2\Big)  \\
	 \times	 \prod_{k \leq \frac{i_{\max}}{2}} \P_{\chi(T_{2k-1})} \Big(  X[0, T] = \chi[T_{2k-1}, T_{2k}]  
	  \Big|  X (T)  = \chi( T_{2k}) , \text{crossing rectangles}, Y_2 \Big) \\
	 \times \P_{\chi( T_{i_{max}})} \Big( X[0, T] = \chi[T_{i_{\max}}, T_{\max}] \Big|  \text{crossing rectangles}, Y_2 \Big),
\end{multline*}
where in the second line we denote by $ X$ a simple random walk started at the position $\chi(T_{2k-1})$ indicated in the probability and by $T$ the hitting time of $U_2$ while the conditioning event is given in the third step in the definition of $\tilde X_1$. In the third line we use similar notations referring to the last step.

We start by analysing the terms in the second line, i.e from $2k-1$ to $2k$. By the lack of $(\epsilon^2,\epsilon)$-quasi loop in $Y$, we can find an uniform bound on the number of rectangles that have to be crossed to follow $\phi(Y)$. By the conditional uniform crossing, each rectangle before the last has uniformly positive probability to be crossed by the random walk (since they are all at distance at least $\epsilon^2/2$ of $\partial U_2$). 
 Therefore, up to a multiplicative factor, we can forget about the conditioning to cross rectangles in the second line but note that this also removes any dependence on $Y$. 
 For the last step (i.e the third line in the equation), by uniform continuity of $\phi$ we have a uniform bound on the number of rectangles that have to be crossed to follow $\phi(Y)$ from $\chi( T_{i_{\max}})$ up to a very small distance of $\partial D_1$ and from there \cref{beurling} shows that the walk is likely to exit $D_1$ without going far away.
  Overall if we also integrate to remove the conditioning on $Y_2$, we obtain
\begin{multline*}
\P (\tilde X_1 = \chi )  \leq C \E_{Y_2} \Big[ \P( \forall k < i_{\max}/2,  \tilde X_2[T_{2k}, T_{2k+1}] = \chi[T_{2k}, T_{2k+1}] \mid Y_2) \Big] \\
\times	 \prod_{k \leq \frac{i_{\max}}{2}} \P_{\chi(T_{2k-1})} \Big( X[0, T] = \chi[T_{2k-1}, T_{2k}]  
\Big|  X (T)  = \chi( T_{2k}) \Big) \\
\times \P_{\chi( T_{i_{max}})} \Big(  X[0, T] = \chi[T_{i_{\max}}, T_{\max}] \Big) \Big),
\end{multline*}
By \cref{lem:radonX2} when conditioned on $Y_2$, the law of $\tilde X_2$ is absolutely continuous with respect to a random walk conditioned by its loop-erasure. When taking the expectation over $Y_2$, we therefore recover up to constant the law of a random walk, i.e we have 
\begin{multline*}
\P (X_1 = \chi )  \leq C  \P( \forall k < i_{\max}/2,  X[T_{2k}, T_{2k+1}] = \chi[T_{2k}, T_{2k+1}] )  \\
\times	 \prod_{k \leq \frac{i_{\max}}{2}} \P_{\chi(T_{2k-1})} \Big( X[0, T] = \chi[T_{2k-1}, T_{2k}]  
\Big|  X (T)  = \chi( T_{2k}) \Big) \\
\times \P_{\chi( T_{i_{max}})} \Big( X[0, T] = \chi[T_{i_{\max}}, T_{\max}]\Big).
\end{multline*}
Now for the random walk, it is not hard to see that 
\begin{multline*}
\P\Big( \forall k < i_{\max}/2,   X[T_{2k}, T_{2k+1}]  = \chi[T_{2k}, T_{2k+1}] \Big) \leq C \textrm{harm}_{D_2 \setminus U_2} ( \chi(T_{i_{\max}}) , \partial D_2)\\ 
\times  \prod \P_{\chi(T_{2k})} \Big(  X[0, T] =\chi[T_{2k}, T_{2k+1}] \Big)   \textrm{harm}_{D_2 \setminus U_2}( \chi(T_{2k-1}), \chi(T_{2k})) ,
\end{multline*}
where $\textrm{harm}_{D_2 \setminus U_2}( \chi(T_{2k-1}), \chi(T_{2k}))$ is the probability starting from  $\chi(T_{2k-1})$ to reach $U_2$ before exiting $D_2$ and to do so through $ \chi(T_{2k})$. 
When combining the previous two equations, the harmonic measure in $D_1$ and $D_2$ are within a constant of each other and they compensate the conditioning on the endpoints for odd intervals. We conclude by comparing the expression for simple random walk and for $X_1$.
\end{proof}

\subsection{loop-erasure}\label{sec:erasure}

Now that we have our candidate random walk in $D_1$, we can look at its loop-erasure. Let $\tilde Y_1$ denote the mixed loop erasure of $\tilde X_1$ with respects to the alternating sequence of sets $E_{2i} = U_2^\d$ and $E_{2i+1} =D_1^\d \setminus U_3^\d$.
\begin{lem}\label{lem:erasure}
	There exists $C > 0$ such that for all $\epsilon >0$ if $r$ is small enough compared to $\epsilon$ and if $Y_2$ is $\epsilon$-good then we have
	\[
	d( \tilde Y_1, \phi(Y_2) ) \leq C \epsilon, \text{ and } \tilde Y_1 \cap U_1 = Y_2 \cap U_1,
	\]
	where recall that $d( \tilde Y_1, \phi(Y_2) )$ denotes the uniform distance up to time parametrisation.
\end{lem}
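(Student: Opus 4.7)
The plan is to first establish a pathwise closeness between $\tilde X_1$ and $\phi(Y_2)$, then deduce uniform closeness of the mixed loop erasure $\tilde Y_1$ to $\phi(Y_2)$ via a thin-tube/quasi-loop argument, and finally exploit a stronger pathwise coincidence inside $\overline{U_2}$ to get the exact identity on $U_1$. For the first step, on an inside interval $[T_{2k}, T_{2k+1}]$ the construction forces $\tilde X_1 = \tilde X_2$; by the Wilson-type construction of $\tilde X_2$ on the event $G_r$ (all soup loops have diameter at most $r$), $\tilde X_2$ stays within $r$ of $Y_2$, and since $\phi$ is the identity on $U_3$ one gets $d(\tilde X_1, \phi(Y_2)) \leq r$ on such intervals. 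On an outside interval $[T_{2k-1}, T_{2k}]$ the rectangle prescription of precision $r$ gives $d(\tilde X_1, \phi(\tilde X_2)) \leq r$, and Lipschitz continuity of $\phi$ transports the bound $d(\tilde X_2, Y_2) \leq r$ to $d(\phi(\tilde X_2), \phi(Y_2)) \leq C r$, yielding the pathwise bound $d(\tilde X_1, \phi(Y_2)) \leq C r$ globally.

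Since $\tilde Y_1$ is contained in the trace of $\tilde X_1$, it lies in the $Cr$-tube around $\phi(Y_2)$. The $\epsilon$-goodness of $Y_2$ rules out $(\epsilon^2, \epsilon)$-quasi loops, and bi-Lipschitzness of $\phi$ transports the same (with slightly worse constants) to $\phi(Y_2)$. Choosing $r$ small enough compared to $\epsilon^2$, any simple curve with the correct endpoints inside such a thin tube must be within uniform distance $C\epsilon$ of $\phi(Y_2)$ up to reparametrization, because any substantial transverse deviation would force the curve to retrace the tube and create a forbidden quasi loop. This yields the first claim $d(\tilde Y_1, \phi(Y_2)) \leq C\epsilon$.

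For the exact identity $\tilde Y_1 \cap U_1 = Y_2 \cap U_1$, the key observation is that the rectangle construction of $\tilde X_1$ keeps it at distance at least $\epsilon^2/2$ from $U_2$ throughout outside intervals, so $\tilde X_1$ enters $\overline{U_2} \supset \overline{U_1}$ only during inside intervals, where $\tilde X_1 = \tilde X_2$. Hence $\tilde X_1$ and $\tilde X_2$ produce identical time-parameterized traces inside $\overline{U_2}$. I would then argue by induction on $i$ that $Y^{(1)}_i$ and $Y^{(2)}_i$ coincide on $\overline{U_1}$. Outside-interval steps are easier because the cut $s_i^{(j)}$ lies in $D_1 \setminus U_2$, the freshly appended BLE lies outside $U_2$, and the removed suffixes of the $Y_i^{(j)}$ agree inside $U_1$ thanks to the walks' coincidence in $\overline{U_2}$ together with an inductive uniform closeness of $Y_i^{(1)}$ and $Y_i^{(2)}$. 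Inside-interval steps use the identity of the walks on $\overline{U_3}$ together with $\epsilon$-goodness to ensure that first intersections with the common new walk occur at matching points as far as $U_1$ is concerned.

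The main obstacle is this last induction: mixed loop erasure is a genuinely global operation, so coincidence of the walks in $\overline{U_2}$ does not by itself imply coincidence of the erasures in $U_1$. The induction must be strengthened to an approximate version on a neighbourhood of $\overline{U_1}$ (using the uniform closeness from the first part of the lemma), and the crucial technical input is the $\epsilon$-goodness of $Y_2$ — especially the localized approach to $\partial U_2$ — which is what rules out pathological configurations in which the two cut sequences could remove different portions of $U_1$.
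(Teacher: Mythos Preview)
Your overall structure matches the paper's: first $d(\tilde X_1, \phi(Y_2)) \leq Cr$, then $d(\tilde Y_1, \phi(Y_2)) \leq C\epsilon$, then induction on the step index $i$ of the mixed loop erasure for the exact identity on $U_1$. The first step you do exactly as the paper does.

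The gap is in how you pass from the first step to the second. You phrase it as a ``thin tube'' argument for the simple curve $\tilde Y_1$; this does yield $d(\tilde Y_1, \phi(Y_2)) \leq C\epsilon$, but nothing more. The paper extracts a strictly stronger intermediate statement: \emph{every loop of $\tilde X_1$ has diameter at most $K\epsilon$}. Indeed, if $\tilde X_1(u) = \tilde X_1(w)$ with $\diam(\tilde X_1[u,w]) \geq K\epsilon$, transporting $u, v, w$ to $\phi(Y_2)$ via the $Cr$-closeness produces a $(2Cr, K\epsilon - 2Cr)$-quasi-loop of $\phi(Y_2)$, which for $K$ large and $r$ small contradicts $\epsilon$-goodness after bi-Lipschitz transport. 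The closeness of $\tilde Y_1$ to $\tilde X_1$ is then an immediate corollary.

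This loop-diameter bound is precisely what your induction is missing. For an even step, you assert that the cut point $\tilde Y_1^{2k-1}(s_{2k-1})$ lies outside $U_2$; that much follows from the rectangle prescription for $\tilde X_1[T_{2k-1}, T_{2k}]$, but it does \emph{not} by itself prevent the removed suffix $\tilde Y_1^{2k-1}[s_{2k-1}, \text{end}]$ from dipping back into $U_1$. What does is the loop bound: any intersection of $\tilde X_1[T_{2k-1}, T_{2k}]$ with the trace of $\tilde X_1[0, T_{2k-1}]$ creates a loop of $\tilde X_1$ containing $\tilde X_1(T_{2k-1}) \in \partial U_3$, so the cut point is within $K\epsilon$ of $\partial U_3$ and the $U_1$-portion is untouched. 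For an odd step one has $\tilde X_1 = \tilde X_2 =: \tilde X$ on $[T_{2k}, T_{2k+1}]$, and the only possible discrepancy is between the cut times $t_{2k}^1$ and $t_{2k}^2$; the loop bound forces both $\tilde X[T_{2k}, t_{2k}^j]$ to stay within $2K\epsilon$ of $\tilde X(T_{2k}) \in \partial U_2$, so $\bLE(\tilde X[t_{2k}^1, T_{2k+1}])$ and $\bLE(\tilde X[t_{2k}^2, T_{2k+1}])$ agree outside a $3K\epsilon$-neighbourhood of $\partial U_2$, in particular on $U_1$. The ``localised approach to $\partial U_2$'' clause of $\epsilon$-goodness, which you single out as the crucial input, plays no role in the paper's argument here.
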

\begin{proof}
	We start by proving the first statement. By construction, $d(\tilde X_2, Y_2) \leq r$ since $\tilde X_2$ is obtained from $Y_2$ by adding loops of size at most $r$. Also, since $\tilde X_1$ is defined by gluing portions of paths where $\tilde X_2 = \phi(\tilde X_2)$ with portions following $\phi(X_2)$, by construction it satisfies $d( \tilde X_1, \phi( \tilde X_2)) \leq r$.	
  Since $\phi$ is Lipschitz (recall that it was chosen as a diffeomorphism), we therefore have $d( \tilde X_1, \phi( Y_2)) \leq Cr$ for some $C > 0$. Also using the Lipschitz property of $\phi^{-1}$, we see that $\phi( Y_2)$ does not have $c \epsilon^2, C \epsilon$ quasi-loops for some $c, C$ depending only on $\phi$.
	
	Now let us bound the size of the loops created by $\tilde X_1$. Suppose by contradiction that $\tilde X_1$ creates a loop of diameter $K \epsilon$ for some $K$ to be chosen large enough later, and let $u, v, w$ be three times such that $v \in [u, w]$, $\tilde X_1(u) = \tilde X_1(w)$ and $d(\tilde X_1(v), \tilde X_1(u)) \geq K \epsilon$. 
	Since $d( \tilde X_1, \phi(\tilde X_2)  )\leq r$, there must exists corresponding times $u', v', w'$ such that $d( \phi(Y_2)(u'), \phi( Y_2(w')) \leq 2Cr$ and $d( \phi(Y_2)(u'), \phi( Y_2(v')) \leq K \epsilon - 2Cr$. In particular $\phi( Y_2)$ must have an $(2Cr, K\epsilon-2Cr)$-quasiloop which is a contradiction for $r$ small enough and $K$ large enough. As a corollary, $d(\tilde Y_1, \tilde X_1) \leq K \epsilon$ which completes the proof of the first point. From now on we assume that $\epsilon$ is chosen so that $K \epsilon$ is small enough compared to the distances between the boundary of any of the $U_i$ or $D_i$.
	
	For the proof of the second point, let $\tilde Y_1^i$ denote the mixed loop erasure of $\tilde X_1[0, T_i]$ and similarly for $\tilde X_2$ (this is the same notation as in the definition of the mixed loop-erasure \ref{def:mixedLE} except that we moved the ``number of step'' index up because we now also have the domain index) and let us proceed by induction. For $i=1$, there is nothing to prove, we have $\tilde Y_1^1 =  Y_2^1$. For $i= 2$ (and for general even steps later), by the bound on the size of erased loops, we see that $\tilde X_1[T_1, T_2]$ cannot intersect the portion of $\tilde Y_1$ before its last visit to $U_2$. Since the same holds for $\tilde X_2$ we must in particular have 
	\[
	\tilde Y_1^2 \cap U_1 = \tilde Y_1^1 \cap U_1 =  Y_2^1 \cap U_1 = Y_2^2 \cap U_1.
	\]
	For $i=3$ and general odd steps afterwards, let $s_2^1, s_2^2, t_2^1, t_2^2$ be defined as in \cref{def:mixedLE} so that we have
	\[
	\tilde Y_1^3 = \tilde Y_1^2[0, s_2^1] \oplus \bLE( \tilde X[t_2^1, T_3]) , \quad  Y_2^3 = Y_2^2[0, s_2^2] \oplus \bLE( \tilde X[t_2^2, T_3]),
	\]
	where note that crucially we do not need a subscript for the random walk trajectory $\tilde X$ of the time $T_3$ as they refer to a portion where $\tilde X_1 = \tilde X_2$. By the bound on the size of erased loops, both $\tilde X[T_2, t_2^1]$ and $\tilde X[T_2, t_2^2]$ must stay within $2K\epsilon$ of $\tilde X(T_2)$ and therefore (still using the bound on the size of erased loops)$ \bLE( \tilde X[t_2^1, T_3])$ and $\bLE( \tilde X[t_2^2, T_3])$ must agree whenever they are at distance $3 K \epsilon$ of $\tilde X(T_2)$, in particular $\tilde Y_1^3 \cap U_1 = Y_2^3 \cap U_1$.
	We conclude the proof by induction.
\end{proof}

\subsection{Coupling several paths}\label{sec:multiplepath}

In this section, we conclude the proof of \cref{prop:lower_bound}. From \cref{sec:mixed_erasing,sec:RWD2,sec:RWD2,sec:erasure}, we showed how to construct a single path in $D_1$ that agree over $U_1$ with a loop erased random walk in $D_2$. 
\begin{lem}
	For all open sets $U$, $U_1$ such that $\bar U \subset U_1$ and $\bar U_1 \subset D_1$, for all homomorphism $\phi$ from $D_2$ to $D_1$ restricting to the identity on a neighbourhood of $U_1$, for all $n \geq 1$, there exits $C$ such that for all $x_1, x_n \in U^\d$ there exists a coupling of $Y_2(v_1), \ldots, Y_2(v_n)$ with a law $\tilde \mu_1$ on paths $\tilde Y_1(v_n), \ldots, \tilde Y_1(v_n)$ in $D^\d_1$ satisfying
	\begin{gather*}
	\P( \forall i, \, Y_2( v_i) \cap U_1 = \tilde Y_1( v_i) \cap U_1 ) \geq 1-\epsilon, \\
	\P( \forall i, \, d( \tilde Y_1(v_i), \phi( Y_2(v_i)) \geq \epsilon) \leq \epsilon,\\
	 \frac{d\tilde \mu_1 }{d \mu_1} \leq C.
	\end{gather*}
	where with a slight abuse of notation $\mu_1$ denotes the joint law of the spanning tree branches starting from $v_1, \ldots, v_n$ in $D_1$.
\end{lem}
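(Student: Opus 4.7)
The plan is to iterate the single-path construction of \cref{sec:mixed_erasing,sec:RWD2,sec:RWD1,sec:erasure} within Wilson's algorithm: sample the branches $Y_2(v_1), \ldots, Y_2(v_n)$ sequentially in $D_2$, where $Y_2(v_i)$ is the loop-erasure of a random walk from $v_i$ stopped at $\cT_2^{i-1} \cup \partial D_2^\d$ and $\cT_2^{i-1}$ is the partial tree built from the first $i-1$ branches. In parallel, I will build $\tilde Y_1(v_i)$ in $D_1$ by producing $\tilde X_2^i$ via an independent loop soup conditioned on $G_r$ as in \cref{sec:RWD2}, gluing it to a companion walk $\tilde X_1^i$ as in \cref{sec:RWD1} (equal to $\tilde X_2^i$ inside $U_3$, routed via $\phi$ outside $U_3$), but now stopped when it first hits $\cT_1^{i-1} \cup \partial D_1^\d$, and finally taking the mixed loop-erasure with respect to the usual alternating sets.

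The induction hypothesis I will carry is that on an event $A_i$ of probability at least $1 - i \epsilon$, one has both $\cT_1^{j} \cap U_1 = \cT_2^{j} \cap U_1$ and $d(\tilde Y_1(v_j), \phi(Y_2(v_j))) \leq C \epsilon$ for every $j \leq i$; the base case is precisely the combination of \cref{lem:radonX2,lem:RN_X1,lem:erasure}. For the inductive step, the crucial observation is that the proof of \cref{lem:erasure} only needs the ``stopping set'' in $D_1$ to agree with the one in $D_2$ inside $U_1$. Since $\tilde X_1^i = \tilde X_2^i$ on $U_3 \supset U_1$, any termination of $\tilde X_2^i$ inside $U_1$ at a point of $\cT_2^{i-1}$ is by induction also a point of $\cT_1^{i-1}$, so $\tilde X_1^i$ terminates at exactly the same point. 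Terminations outside $U_1$ are routed via $\phi$ and stopped at whichever point of $\cT_1^{i-1} \cup \partial D_1^\d$ is hit first; the small-loop conditioning $G_r$ combined with the $\epsilon$-good event prevents the mixed loop-erasure procedure from propagating any resulting discrepancy back into $U_1$, by the same quasi-loop argument as in the proof of \cref{lem:erasure}.

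For the Radon--Nikodym control, each single-branch step contributes a bounded multiplicative factor: \cref{lem:radonX2} controls the density of $\tilde X_2^i$ given $Y_2^i$, and \cref{lem:RN_X1} controls the density of $\tilde X_1^i$ with respect to a genuine random walk in $D_1$. The proof of \cref{lem:RN_X1} transfers when ``exiting $D_1$'' is replaced by ``hitting $\cT_1^{i-1} \cup \partial D_1^\d$'', as it only relies on conditional crossing estimates and Beurling-type bounds near the target set. Composing these bounds over the $n$ conditionally independent branches from Wilson's algorithm yields $d \tilde \mu_1 / d \mu_1 \leq C^n$, which is a constant since $n$ is fixed; a union bound over the $n$ branches with $\epsilon$ rescaled by $1/n$ then gives the first two conclusions of the lemma.

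The main obstacle, as in the single-branch case, is controlling the walk outside $U_1$, where the trees $\cT_1^{i-1}$ and $\cT_2^{i-1}$ genuinely differ. When $\tilde X_2^i$ terminates far from $U_1$, the partner walk $\tilde X_1^i$ must reach a potentially very different hitting point in $D_1$, and we have to rule out the possibility that this forces a large erased loop to protrude back into $U_1$ and corrupt the agreement on $U_1$ we established inductively. The combination of the small-loop conditioning with the exclusion of $(\epsilon^2, \epsilon)$-quasiloops in the $\epsilon$-good definition handles exactly this: any loop of diameter large enough to reach $U_1$ from outside $U_3$ would produce a quasi-loop in $Y_2(v_i)$, which has vanishing probability by \cref{quasi_loop}.
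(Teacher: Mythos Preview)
Your overall inductive scheme matches the paper's: run Wilson's algorithm in $D_2$, rebuild each branch in $D_1$ via the single-path machinery of \cref{sec:RWD2,sec:RWD1,sec:erasure}, and multiply the per-branch Radon--Nikodym bounds. The base case and the handling of terminations \emph{inside} $U_1$ (where by induction $\cT_1^{i-1}$ and $\cT_2^{i-1}$ agree, so both walks stop simultaneously) are fine.

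There is, however, a genuine gap in your treatment of terminations \emph{outside} $U_1$. You only discuss the case ``$\tilde X_2^i$ terminates far from $U_1$ and $\tilde X_1^i$ terminates somewhere possibly different'', and you argue that the discrepancy cannot propagate back as a large erased loop. But the dangerous scenario is asymmetric: $Y_2(v_i)$ may pass close to some earlier branch $Y_2(v_j)$ outside $U_1$ \emph{without} hitting it, and then return to $U_1$; meanwhile the companion walk $\tilde X_1^i$, following $\phi(Y_2(v_i))$ up to an $O(\epsilon)$ error, may actually hit $\tilde Y_1(v_j)$ (which is only $O(\epsilon)$-close to $\phi(Y_2(v_j))$ by your induction hypothesis) and stop there prematurely. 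In that case $\tilde Y_1(v_i)$ is simply \emph{missing} the later return of $Y_2(v_i)$ to $U_1$; no loop is erased at all, so the quasi-loop argument of \cref{quasi_loop} for the single path $Y_2(v_i)$ does not apply.

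The paper closes this gap by carrying the induction hypothesis at a strictly finer scale: one applies the hypothesis on a slightly larger set $U_1^{(n)}\supset \overline{U_1}$ and with precision $\epsilon_n\ll\epsilon$. Then whenever one of $\tilde X_1^i,\tilde X_2^i$ first comes within $\epsilon_n$ of its target tree, the other is within $C\epsilon_n$ of its own target (since the trees are $\epsilon_n$-close via $\phi$), and Beurling forces both walks to terminate within distance $\epsilon$ with high probability, ruling out any subsequent return to $U_1$. Your proof needs either this shrinking-$\epsilon_n$ device or, equivalently, an explicit ``no near-miss of the previous tree followed by a long excursion'' condition added to the $\epsilon$-good event and justified via Beurling; invoking \cref{quasi_loop} alone is not sufficient.
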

\begin{proof}
	We prove the result by induction on $n$. For $n=1$, we combine \cref{lem:erasure} to control the first two lines on the event that $Y_2^is $$\epsilon$-good (whose probability goes to $1$ as $\epsilon \to 0$ with \cref{lem:RN_X1} for the third line (the uniform bound on the Radon-Nicodym derivative there trivially extends to the law of any function of $\tilde X_1$). Suppose now that it holds for some $n \geq 1$. We choose $U^{(n)}_1$ such that $\overline{U}_1 \subset U^{(n)}_1$ and such that $\phi$ restricts to the identity in a neighbourhood of $U_1'$. By induction we can find a law on paths $\tilde Y_1(v_1), \ldots, \tilde Y_1( v_n)$ satisfying the assumption for $U_1^{(n)}$ and some $\epsilon_n$ to be chosen small enough later. By Wilson's algorithm, what remains to be done is to couple loop-erased random walks from $v_{n+1}$ in respectively $D_1 \setminus \cup \tilde Y_1(v_i)$ and $D_2 \setminus \cup Y_2( v_n)$. We can also assume without loss of generality that the $\tilde Y_1(v_i)$ paths and the $Y_2(v_i)$ paths agree in $U_1^{(n)}$.
	
	This is very similar to the case of the first walk from \cref{sec:RWD1,sec:RWD2,sec:erasure}, with only two differences : the random walk can stop inside $U_1$ and the boundary of the two (slitted) domains are not exactly mapped to each other by $\phi$ (note that in the statement, $\phi$ must be deterministic it cannot be adapted for the slit domains we have at each step). 
	
	 The first difference causes no issue whatsoever. It is now possible for the sequence of time $T_i$ from the definition of the mixed loop-erasure \ref{def:mixedLE} to end at an odd step if a random walk hits one of the previous paths $Y(v_i)$ before the boundary. This does not break the coupling since over these time intervals $\tilde X^1 = \tilde X^2$ and by induction the paths $Y(v_i)$ agree in $U_1'$. 

	The second difference could be more problematic. For example if the new spanning tree branch $Y^2(v_{n+1})$ comes close to $Y^2(v_1)$ without hitting it, it could happen that $\phi( Y^2(v_{n+1}))$ intersects $\tilde Y^1(v_1)$. In this case a walk following $\phi( Y^2(v_{n+1}))$ might very well hit the boundary ``too soon'' and break the first part of the proposition.
	
	 However this cannot happen if we take $\epsilon_n$ sufficiently small compared to $\epsilon$. Indeed, the first time either $\tilde X_1$ or $\tilde X_2$ come within $\epsilon_n$ of a boundary, the other walk must be within $ C\epsilon_n$ of a boundary and therefore both are very unlikely to move more than $\epsilon$ from their current position and therefore cannot erase any large loop. 
\end{proof}

Similarly to the first case, once a large but finite number of paths have been coupled in a neighbourhood of $U$, we can extend the coupling to the rest of the tree by the finiteness theorem so overall we obtain \cref{prop:lower_bound_coupling}. Using \cref{coupling_to_radon} we obtain the second part of \cref{thm:main}.
\begin{prop}\label{prop:lower_bound}
	For any $\epsilon > 0$, there exists $C > 0$ such that for all $\delta$ small enough
	\[
	\nu_2( \frac{1}{C} \leq \frac{d \nu_1}{d \nu_2} \leq C) \geq 1-\epsilon.
	\]
\end{prop}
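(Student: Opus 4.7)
The plan is exactly the one announced in the text preceding the statement: extend the finite-$n$ coupling from the previous lemma to the full tree via Schramm's finiteness theorem in order to establish \cref{prop:lower_bound_coupling}, and then deduce the proposition by applying \cref{coupling_to_radon} with the roles of $\nu_1$ and $\nu_2$ swapped.

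For the first step, fix $\epsilon > 0$ and invoke \cref{thm:finitude} to produce an integer $n$ and starting points $v_1, \ldots, v_n$ near $U$ such that, with probability at least $1-\epsilon/2$, after sampling the first $n$ branches in Wilson's algorithm one can complete the tree using additional walks that never leave a small $\eta$-neighbourhood of $U$; choose $\eta$ so that this neighbourhood $U_1$ is compactly contained in $D_1$. Apply the previous lemma to $(v_1, \ldots, v_n)$ with a sufficiently small precision $\epsilon_n$ and with the set $U_1$, obtaining coupled branches $\tilde Y_1(v_i)$ and $Y_2(v_i)$ that agree on $U_1$ except on a small event, while the joint law of the $\tilde Y_1(v_i)$ has bounded density with respect to the law of the first $n$ UST branches in $D_1$. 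Complete both trees by Wilson's algorithm run on the \emph{same} additional random walks; by the finiteness theorem these walks remain confined to a neighbourhood of $U$ where (on the good event) both partial trees already coincide, so the restrictions to $U$ of the two resulting full trees are identical. This produces a law $\tilde\nu_1$ on restrictions to $U$ satisfying $d_{TV}(\nu_2, \tilde\nu_1) \leq \epsilon$ and $d\tilde\nu_1 / d\nu_1 \leq 1/p$ for some $p > 0$, which is exactly \cref{prop:lower_bound_coupling}.

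For the second step, apply \cref{coupling_to_radon} with the roles of $\nu_1$ and $\nu_2$ exchanged, so that the paper's ``$\tilde \nu_2$'' is our $\tilde\nu_1$. The hypotheses read $d_{TV}(\nu_2, \tilde\nu_1) \leq \epsilon$ and $d\tilde\nu_1 / d\nu_1 \leq 1/p$, and the conclusion becomes
\[
\nu_2\!\left( \epsilon \leq \frac{d\nu_2}{d\nu_1} \leq \frac{2}{p} \right) \geq 1 - 5\epsilon.
\]
Setting $C := \max(2/p, 1/\epsilon)$ and rewriting the event as $\{1/C \leq d\nu_1/d\nu_2 \leq C\}$ yields the stated inequality, up to a harmless rescaling of $\epsilon$ by a factor of $5$.

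The main obstacle is concentrated in the first step: the multi-path coupling only controls the density of $\tilde\nu_1$ against the law of the first $n$ branches of an UST in $D_1$, and one must argue that running Wilson's algorithm on the same additional random walks in both domains preserves this control. The key observation is that the later Wilson walks are drawn from a common source independent of the previous construction, while the finiteness theorem confines them to a small neighbourhood of $U$ where the two partial trees already agree; so these extra steps multiply the density by quantities that are identical on both sides, and the bounded density is preserved. This is the direct analogue, on the $D_1$ side, of the passage from finitely many branches to the full tree already carried out in \cref{sec:upper_bound}.
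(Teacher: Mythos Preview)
Your proposal is correct and follows exactly the route the paper takes: extend the multi-path coupling to the full tree via Schramm's finiteness theorem to obtain \cref{prop:lower_bound_coupling}, and then apply \cref{coupling_to_radon} with the roles of $\nu_1$ and $\nu_2$ swapped. Your write-up is in fact more detailed than the paper's one-sentence justification, and your discussion of why the density bound survives the completion step (the later Wilson walks are unbiased and shared, hence contribute trivially to the Radon--Nikodym derivative on the $D_1$ side) makes explicit a point the paper leaves implicit.
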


\begin{remark}\label{rk:generalD}
	Now that we have upper and lower bound as in \cref{prop:upper_bound,prop:lower_bound} in the case where $D_1 \subset D_2$, the result clearly extends to arbitrary $D_1$ and $D_2$ by just introducing a intermediate common subset to both. In fact in the above proof of \cref{prop:lower_bound} we did not really use the fact that $D_1 \subset D_2$ so we could have omitted the special case treated in \cref{sec:upper_bound}. However the fact that in the setting of \cref{prop:upper_bound} only a simple conditioning is needed is worth noting and starting with the simpler case is useful to the exposition. Also the proof of the simpler case will be useful later for \cref{continuity_bounded}.
\end{remark}

\section{Extensions and generalisations}\label{sec:extensions}

In this section, we provide small variants of our coupling in order to paint a more global picture around \cref{thm:main}.

\subsection{Continuity with respect to the domains}

As mentioned above, we did not keep track of the dependence on the domains of the constants $C$ and $\epsilon$ from \cref{thm:main} but it is a very natural question, the answer of which turns out to be a corollary of the proof of the upper bound (\cref{prop:upper_bound_coupling}). Note that this also covers the case of a non smooth boundary in \cref{thm:main} and shows that the discretization setup used in \cref{sec:assumptions} could be replaced by any other reasonable one.
\begin{corollary}\label{continuity_bounded}
	For any simply connected open sets $U, V$ with $\overline U \subset V$ and for any $\epsilon >0$, there exists $r > 0$ and $\delta_0$ such that for all simply connected domains $D_1, D_2$, if $V \subset D_1 \cap D_2$ and the Hausdorff distance between $\partial {D_1}$ and $\partial D_2$ is less than $r$, then
 \[
\nu_1 ( 1-\epsilon \leq \frac{d \nu_1}{d\nu_2} \leq 1+\epsilon) \geq 1-\epsilon, \qquad \nu_2(  1-\epsilon \leq \frac{d \nu_1}{d\nu_2} \leq 1+\epsilon ) \geq 1-\epsilon.
\]	
\end{corollary}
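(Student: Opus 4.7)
The plan is first to reduce to the nested case $D_1 \subset D_2$ by introducing an intermediate simply connected smooth domain $D_0$ with $\overline V \subset D_0 \subset D_1 \cap D_2$ and $\partial D_0$ within Hausdorff distance $2r$ of both $\partial D_1$ and $\partial D_2$; such a $D_0$ can be constructed from a smoothing of the component of $D_1 \cap D_2$ containing $V$ when $r$ is small enough. Applying the nested version of the corollary to the pairs $(D_0, D_1)$ and $(D_0, D_2)$ and combining via the chain rule $\frac{d\nu_1}{d\nu_2} = \frac{d\nu_1/d\nu_0}{d\nu_2/d\nu_0}$ will then yield the general statement at the cost of replacing $\epsilon$ by a constant multiple of itself.

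In the nested case, the crucial observation is that the conditioning event $E_r$ from \cref{prop:upper_bound_coupling} becomes unnecessary. Indeed, its sole role in the proof of \cref{lem:subtree} was to ensure that whenever a random walk first exits $D_1$ at a point $X^1(\tau) \in \partial D_1$, this point lies within distance $r$ of the initial tree $\cT_2^0$ of $D_2$. Under our Hausdorff assumption this is automatic already with $\cT_2^0 = \partial D_2^\d$, so the proof of \cref{lem:subtree} applies verbatim to the coupling of $\mu_1$ and $\mu_2$ obtained by running Wilson's algorithm in both domains with common random walk trajectories. Combined with \cref{thm:finitude}, whose conclusion can be made uniform over domains sufficiently close to a fixed reference one, this produces, for any prescribed $\epsilon' > 0$ and any $r,\delta$ small enough, a coupling $\pi$ of $\mu_1$ and $\mu_2$ with $\pi(\cT_1|_U = \cT_2|_U) \geq 1 - \epsilon'$.

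The nontrivial step is to upgrade this coupling statement into a Radon-Nikodym bound, since \cref{coupling_to_radon} does not apply directly to ``diagonal'' couplings of this form. Let $F = \{\cT_1|_U = \cT_2|_U\}$ and $h_i(\omega) = \pi(F \mid \cT_i|_U = \omega)$. Since $\cT_1|_U$ and $\cT_2|_U$ coincide on $F$, we have the Bayesian identity $\nu_1(\omega) h_1(\omega) = \pi(F, \cT_1|_U = \omega) = \pi(F, \cT_2|_U = \omega) = \nu_2(\omega) h_2(\omega)$, so $\frac{d\nu_1}{d\nu_2} = \frac{h_2}{h_1}$. Now $\E_{\nu_i}[1 - h_i] = 1 - \pi(F) \leq \epsilon'$, so Markov's inequality yields $\nu_i(h_i \geq 1 - \sqrt{\epsilon'}) \geq 1 - \sqrt{\epsilon'}$ for $i=1,2$. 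Moreover, the inequality $\nu_1(A) \leq \nu_2(A) + \epsilon'$ for any event $A$ (which follows from $\cT_1|_U = \cT_2|_U$ on $F$) allows us to transfer the bound on $\{h_2 \geq 1 - \sqrt{\epsilon'}\}$ from $\nu_2$ to $\nu_1$. On the intersection of both good events, which has $\nu_1$-probability at least $1 - 2\sqrt{\epsilon'} - \epsilon'$, the ratio $h_2/h_1$ lies in $[1-\sqrt{\epsilon'}, (1-\sqrt{\epsilon'})^{-1}]$, and the conclusion follows by taking $\epsilon' = (\epsilon/C)^2$ for a suitable absolute constant $C$.

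The main new ingredient is the Bayesian identity $d\nu_1/d\nu_2 = h_2/h_1$ used in the third paragraph: it is precisely what lets us upgrade the coupling statement ``the trees coincide on $U$ with high probability'' into a multiplicative closeness of the densities, a bound strictly stronger than the total variation inequality $|\nu_1(A) - \nu_2(A)| \leq \epsilon'$ that follows trivially from the coupling. Removing the $E_r$ event from the coupling step is essentially immediate once one inspects where that event was used in \cref{lem:subtree}, and the reduction to the nested case via $D_0$ is routine.
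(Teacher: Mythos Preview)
Your proof is correct and follows the same overall strategy as the paper: drop the conditioning event $E_r$, apply \cref{lem:subtree} directly under the Hausdorff hypothesis to couple the trees on $U$ with high probability, and upgrade this to a Radon--Nikodym bound.

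Two minor differences are worth flagging. First, your reduction to the nested case via an intermediate $D_0$ is not needed: the proof of \cref{lem:subtree} works just as well for two non-nested domains whose boundaries are Hausdorff-close, since the only ingredient used is that whenever the walk hits one boundary it is within distance $r$ of the other, and then Beurling forces it to hit the other boundary nearby. Second, your Bayesian identity $\tfrac{d\nu_1}{d\nu_2}=h_2/h_1$, while correct and pleasant, is more elaborate than necessary. The paper simply observes that the unconditioned coupling gives $d_{TV}(\nu_1,\nu_2)\leq\epsilon^2$ directly, hence $\int\bigl|\tfrac{d\nu_2}{d\nu_1}-1\bigr|\,d\nu_1\leq 2\epsilon^2$, and a single application of Markov's inequality finishes. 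In other words, removing the bias makes \cref{coupling_to_radon} unnecessary rather than inapplicable---one can skip straight to its last line.
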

\begin{proof}
	In the proof of \cref{prop:upper_bound_coupling}, note that the role of the event $E^\d$ is precisely to reduce the problem to the case where $D_1$ and $D_2$ are very close. It is therefore easy to see that \cref{lem:subtree} applies in this setting and that for $r$ small enough one has $d_{TV} (\nu_1^\d, \nu_2^\d) \leq \epsilon^2$. We conclude similarly as in \cref{coupling_to_radon} : by definition of the total variation distance we known that $\int \abs{\frac{d \nu_2}{d \nu_1} -1} d\nu_1 \leq 2\epsilon^2 $, and then we simply apply Markov's inequality.
\end{proof}

In more abstract term, the corollary above says that the constant $C$ in \cref{thm:main} (and even the whole marginal law $\nu$) depends continuously on the domain for the topology defined by the Hausdorff distance on their boundaries and that the continuity is even uniform if the boundary is far away from $U$. The case of the continuity ``close to the full plane'' was actually proved earlier.

\begin{prop}\label{continuity_unbounded}
	For any bounded open set $U$ and $\epsilon > 0$, there exists $R > 0$ and $\delta_0 > 0$ such that for all $\delta < \delta_0$, for all $D_1, D_2$ which are either domains or the full plane, if $B(0, R) \subset D_1$ and $B(0, R) \subset D_2$ then 
	 \[
	\nu_1 ( 1-\epsilon \leq \frac{d \nu_1}{d\nu_2} \leq 1+\epsilon) \geq 1-\epsilon, \qquad \nu_2(  1-\epsilon \leq \frac{d \nu_1}{d\nu_2} \leq 1+\epsilon ) \geq 1-\epsilon.
	\]
\end{prop}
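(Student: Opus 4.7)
The plan is to mimic the approach used for \cref{continuity_bounded}: build a Wilson's algorithm coupling of the UST in $D_1$ and $D_2$ (and in the full plane, interpreted as a weak limit) such that the resulting trees agree on $U$ with probability at least $1-\epsilon^2$, and then conclude by the standard Markov-inequality argument used at the end of \cref{continuity_bounded}. Crucially, since all the domains contain a large common ball $B(0, R)$ whose interior contains $U$, confinement of random walks inside $B(0, R)$ alone is enough to make the Wilson algorithms agree verbatim, so no analogue of the event $E^\d$ from \cref{prop:upper_bound_coupling} is needed.

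First I would fix $R_0 > 0$ with $\overline{U} \subset B(0, R_0/2)$, choose some small $\eta > 0$, and apply \cref{thm:finitude} to obtain an integer $k$ and starting points $v_1, \ldots, v_k$ within distance $\eta$ of $U$ such that with probability at least $1-\epsilon^2/2$ the partial tree $\cT_k$ produced by the first $k$ branches of Wilson's algorithm determines the tree restricted to $U$ up to corrections involving only walks of diameter at most $\eta$. The key observation is that this statement is uniform in the ambient domain as soon as it contains $B(0, R_0)$, because the finiteness theorem only relies on Beurling's estimate and the uniform crossing estimate, both of which are local.

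Next, I would build the coupling by running the same $k$ independent random walks started from $v_1, \ldots, v_k$ to define Wilson's algorithm simultaneously in $D_1$, $D_2$, and, if applicable, in the full plane. The partial trees remain identical until one of these walks reaches $\partial B(0, R)$, so by \cref{beurling} applied from each starting point in $B(0, R_0)$, the probability that any of the $k$ walks exits $B(0, R)$ before being absorbed is bounded by $k (R_0/R)^{\alpha}$, which we make $\leq \epsilon^2/2$ by choosing $R$ large. On the intersection of the finiteness event and this confinement event, the trees restricted to $U$ coincide, so $d_{TV}(\nu_1, \nu_2) \leq \epsilon^2$, and the desired Radon-Nikodym bound then follows exactly as in \cref{continuity_bounded}.

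The only real subtlety is the full-plane case, for which wired UST must be interpreted as the weak limit of wired UST in exhausting domains. This is not a serious obstruction: the same Beurling confinement argument applied between a large finite domain and an even larger one shows that the marginal on $U$ is Cauchy in total variation, so the limit exists, and our bounds transfer to it. Beyond that, the only technical ingredients are the uniformity of the finiteness theorem and of Beurling's estimate across domains containing $B(0, R_0)$, both of which are already built into the framework of \cref{sec:assumptionbackground}.
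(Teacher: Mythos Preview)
Your approach has a genuine gap in the second step. You claim that ``the probability that any of the $k$ walks exits $B(0, R)$ before being absorbed is bounded by $k (R_0/R)^{\alpha}$'' via Beurling's estimate, but this is false for the first walk in Wilson's algorithm. Starting from $v_1$ near $U$, the only target is the wired boundary $\partial D_i$, which lies \emph{outside} $B(0,R)$; hence the first walk exits $B(0,R)$ with probability $1$. Beurling's estimate (\cref{beurling}) requires a set $A$ connecting the inner and outer circles, and at the first step there is no such set. Consequently the first branch reaches different boundaries in $D_1$ and $D_2$, its loop-erasure may differ between the two domains, and once the partial trees $\cT_1^1$ and $\cT_2^1$ disagree you have no control over later steps either.

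What is actually needed is a statement that the loop-erasure of the first (and first few) walks, restricted to a neighbourhood of $U$, agrees in $D_1$ and $D_2$ with high probability when $R$ is large. This is not a consequence of Beurling alone; it requires showing that the excursions of the walk outside some intermediate ball $B(0,R')$ do not affect the loop-erasure inside $B(0,R_0)$, which is a non-trivial fact about the locality of the loop-erasing procedure. The paper simply imports this as Theorem~4.21 of \cite{BLR16} to get the total variation bound directly, and then finishes with the Markov argument exactly as you propose. Your reduction to a total variation estimate and your treatment of the full-plane case as a Cauchy limit are fine; the missing ingredient is precisely the coupling of the initial macroscopic branches.
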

\begin{proof}
	The total variation distance between the two measures is controlled by Theorem 4.21 in \cite{BLR16} if the aspect ratio between $U$ and the domains is large enough. We conclude as in \cref{continuity_bounded}.
\end{proof}

\subsection{Dimer configurations and height functions}

As presented in \cref{sec:winding}, the uniform spanning tree is deeply connected to the dimer model and its height function so it is natural to ask whether our results can be generalised to that setting. In this section we will give a positive answer to that question, with only a slight complication with respect to the regularity assumption on the boundary.

As before consider $D_1, D_2, U$ open, simply connected such that $\bar U \subset D_1 \cap D_2$. In order to consider the dimer and height function on them, further assume that $D_1$ and $D_2$ have a locally connected boundary with a marked prime end (see \cref{sec:winding} for details). We will denote by $\nu_1^{(d)}$ (resp. $\nu_2^{(d)}$) the law of the restriction of the dimer configurations in $D^\d_1$ (resp. $D^\d_2$) to $U$. Similarly we denote by $\nu^{(h)}_1$ and $\nu_2^{(h)}$ the laws of the restrictions of the height function to $h$. Note that $\nu_1^{(d)}$ and $\nu_1^{(h)}$ are different because knowing a dimer configuration on $U$ only determines the height function over $U$ up to a global constant.	
We have a direct extension of \cref{thm:main}
\begin{thm}\label{thm:main_dimer}
	For any $\epsilon$, there exists $C > 0$ such that for all $\delta$ small enough,
	\[
		\nu_{1}^{(d)}( \frac{1}{C} \leq \frac{d \nu_1^{(d)}}{d \nu_2^{(d)}} \leq C)  \geq 1-\epsilon, \quad 		\nu_{2}^{(d)}( \frac{1}{C} \leq \frac{d \nu_1^{(d)}}{d \nu_2^{(d)}} \leq C)  \geq 1-\epsilon,
	\]
	and similarly for the measures $\nu^{(h)}$.
\end{thm}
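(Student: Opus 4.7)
The plan is to reduce both statements to \cref{thm:main} via the bijection of \cref{thm:bijection} together with \cref{prop:height_winding}. For the dimer measure the argument is essentially direct: the restriction of a dimer configuration on the superposition graph to $U$ is a deterministic, local function of the UST restricted to a slightly larger open set $U^+$ with $\overline U \subset U^+ \subset D_1 \cap D_2$ (on primal half-edges it reveals the outgoing tree edge at each primal vertex in $U$, and on dual half-edges it reveals which primal edges are absent from the tree). Applying \cref{thm:main} to the triple $(U^+, D_1, D_2)$ and pushing the Radon--Nikodym bound through this local map immediately yields the inequalities for $\nu_1^{(d)}$ and $\nu_2^{(d)}$.

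The height function case is more subtle since, by \cref{prop:height_winding}, the height at a face $f$ depends on the winding of a path $\gamma_f$ running from $f$ along the UST branch to $\partial D$, then along $\partial D$ to the reference prime end, then out to infinity. This is manifestly non-local and cannot be read off the restriction of the tree to $U$. However, height differences between faces of $U$ are determined by the dimer configuration on $U$, so the only non-local information needed is the value $h(f_0)$ at a single reference face $f_0 \in U$. Moreover, by \cref{rq:winding_modulo2pi}, $h$ modulo $1$ is deterministic, so only the integer part of $h(f_0)$ carries randomness. The task therefore reduces to showing that, under an appropriate coupling, this integer part agrees between the two domains with high probability.

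My strategy is to revisit the coupling of \cref{prop:lower_bound_coupling}, which produces branches $\tilde Y_1(v_i)$ and $Y_2(v_i)$ that agree exactly on an enlargement $U_1 \supset \overline U$ and such that $\tilde Y_1(v_i)$ is within uniform distance $\epsilon$ of $\phi(Y_2(v_i))$, where $\phi : D_2 \to D_1$ is a diffeomorphism equal to the identity on $U_1$. I would first refine $\phi$ so that it also maps the reference prime end of $D_2$ to that of $D_1$ and so that $\phi \circ \gamma_{\mathrm{ref},2} = \gamma_{\mathrm{ref},1}$ outside a large disc. Since the two full paths $\gamma_{f_0}^{(1)}$ and $\gamma_{f_0}^{(2)}$ share the same initial segment inside $U_1$, have the same initial tangent direction (so their windings agree modulo $2\pi$ by \cref{rq:winding_modulo2pi}) and both tend to $+\infty$ along the positive real axis, their windings around $\gamma_{f_0}(0)$ differ by $2\pi$ times the signed winding number of the closed loop $\gamma_{f_0}^{(1)} \oplus (\gamma_{f_0}^{(2)})^{-1}$ around $\gamma_{f_0}(0)$.

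The main obstacle is to show that this integer vanishes with high probability. Since $\tilde Y_1$ stays within uniform distance $\epsilon$ of $\phi(Y_2)$ and $\phi$ is a fixed Lipschitz diffeomorphism, by choosing $\epsilon$ much smaller than the distance from $U$ to $\partial D_i$, and using the bounded-winding assumption on edges together with \cref{slit_continuity} to absorb the small discrepancies between $\partial D_1$ and $\phi(\partial D_2)$ and between the two reference paths, the difference loop can be homotoped to a point in $\C \setminus \{\gamma_{f_0}(0)\}$. This forces the winding number to be zero. Combined with the Radon--Nikodym bound on the tree coming from \cref{prop:lower_bound_coupling} and \cref{coupling_to_radon}, we deduce the claimed inequality for $\nu_2^{(h)}$; the inequality for $\nu_1^{(h)}$ follows analogously from \cref{prop:upper_bound_coupling}.
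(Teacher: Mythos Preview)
Your overall strategy matches the paper's: work inside the coupling of \cref{prop:lower_bound_coupling}, choose $\phi$ to send the marked prime end of $D_2$ to that of $D_1$, note that windings agree modulo $2\pi$ by \cref{rq:winding_modulo2pi}, and then argue by continuity that the integer part of the difference must vanish. The paper in addition observes that one may compose $\phi$ with a $k$-fold twist of the annulus $D_2\setminus U$ so that the fixed integer offset coming from $\phi$ itself is zero; you should make that step explicit. Two more substantive points are missing, however.

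First, your homotopy argument glosses over the one place where the winding of $\gamma_{f_0}$ is \emph{not} continuous in the exit point: the marked prime end. If $\tilde Y_1$ and $Y_2$ exit their respective domains on opposite sides of the reference prime end (which is not excluded by $\epsilon$-closeness of $\tilde Y_1$ and $\phi(Y_2)$ alone), the boundary arcs in $\gamma_{f_0}^{(1)}$ and $\gamma_{f_0}^{(2)}$ wrap around $\partial D$ in opposite senses and your difference loop is not contractible in $\C\setminus\{\gamma_{f_0}(0)\}$. The paper handles this by applying \cref{lem:harmonic_measure} to $X_2$ (absolutely continuous with respect to simple random walk) to ensure that with high probability $X_2(T_{\max})$ lies at macroscopic distance $r$ from the marked point, then chooses rectangles small compared to $r$ and uses Beurling so that the last piece of $\tilde X_1$ has diameter $\ll r$; hence the exit of $\tilde X_1$ is within $d_{D_1}$-distance $r/2$ of $\phi(X_2(T_{\max}))$ and the two exit points cannot straddle the marked prime end. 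Your invocation of \cref{slit_continuity} is not the right tool here and does not resolve this issue.

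Second, your final sentence appeals to \cref{prop:upper_bound_coupling} for the $\nu_1^{(h)}$ inequality, but that coupling gives no control on winding: the first branch in $D_2$ is forced to follow $\partial D_1$, and the subsequent branches in $D_2$ end on that branch rather than on $\partial D_2$, so the boundary portion of $\gamma_{f_0}^{(2)}$ bears no obvious relation to that of $\gamma_{f_0}^{(1)}$. The paper explicitly avoids this route, citing \cref{rk:generalD}: the construction of \cref{prop:lower_bound_coupling} does not actually require $D_1\subset D_2$, so running it in both directions yields both inequalities. As a minor remark, your separate treatment of $\nu^{(d)}$ is unnecessary (and the claim that the dual half-edge dimer is a local function of the primal tree is delicate, since the \emph{orientation} of the dual tree toward the reference dual vertex is global); the paper simply notes that the height statement is stronger and implies the dimer one.
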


\begin{proof}
	We first note that the statement is clearly stronger for the law on the height function than for the law on dimers since the former determines the latter locally.  Also as noted in \cref{rk:generalD}, proving \cref{prop:upper_bound_coupling} is not really necessary so let us focus on the proof of \cref{prop:lower_bound_coupling}.
	
	Compared to the initial case, we need to preserve the winding of UST branches between $D_1$ and $D_2$. Recall (as pointed out in \cref{rq:winding_modulo2pi}) that with our conventions, the value modulo $2\pi$ of the winding of a path only depends on its initial direction. In particular for any choice of the map $\phi$ from $D_2$ to $D_1$ and any face $f$ in $U$ we have $W( \gamma_f, \gamma_f(0) ) \equiv W( \phi(\gamma_f), \phi(\gamma_f(0))) [2\pi]$. Furthermore, it is not hard to see that if $\phi$ extends to a homeomorphism from $\overline{(D_2)}_d$ to $\overline{(D_1)}_d$ and if it sends the marked prime end of $d_2$ to the one of $D_1$, then one must have $W( \gamma_f, \gamma_f(0) ) - W( \phi(\gamma_f), \phi(\gamma_f(0))) = 2k \pi$ for some $k \in \Z$ which does not depend on $f$. Finally it is easy to see that composing if necessary with a map that twists $-k$ times the topological annulus $D_2 \setminus U$, we can always choose
	$\phi$ so that it preserves the winding of paths starting in $U$. Since $X_2$ is absolutely continuous with respect to a simple random walk, we can apply \cref{lem:harmonic_measure} to it. In particular we can without loss of generality assume that $ X_2(T_{\max})$ is at a macroscopic distance $r$ from the marked point of $\partial D_2$. 
	
	Now assume that in the construction of the walk $X_1$ (which is done exactly as in \cref{sec:RWD1}), we choose rectangles small enough compared to $r$. In the last step, by Beurling estimate, the diameter of the last added piece is very unlikely to be greater than $r/4$ and in particular with high probability the endpoint of $X_1$ must be within $d_{D_1}$-distance at most $r/2$ of $\phi( X_2(T_{\max}))$. Overall since the winding depends continuously on the path away from $U$, by taking $r$ small enough we can ensure that the windings of the UST branches in $D_1$ and $D_2$ are at most $\pi/2$ away from each other. On the other hand as pointed out in \cref{rq:winding_modulo2pi}, the value modulo $2\pi$ of the winding is determined independently of the tree, so the only possibility is that the windings of the UST branches in $D_1$ and $D_2$ agree. The rest of the proof follows as in \cref{sec:erasure,sec:multiplepath}.
	
\end{proof}


The continuity statement is slightly more involved because, as already pointed out in \cref{sec:domains} the classical topology on domains are not strong enough for us and we have to use the notion introduced in \cref{def:distance_domain}.

\begin{lem}\label{continuity_dimers}
	For all $U$, $V$ simply connected domains with $\overline U \subset V$ and all $\epsilon > 0$, there exists $r > 0$ such that for all $\delta$ small enough, for all simply connected domains $D_1, D_2$ with locally connected boundary and marked prime end, if $D_1$ and $D_2$ are $r$ close then
		 \[
	\nu_1^{(d)} ( 1-\epsilon \leq \frac{d \nu_1^{(d)}}{d\nu_2^{(d)}} \leq 1+\epsilon) \geq 1-\epsilon, \qquad \nu^{(d)}_2(  1-\epsilon \leq \frac{d \nu^{(d)}_1}{d\nu^{(d)}_2} \leq 1+\epsilon ) \geq 1-\epsilon,
	\]
	and similarly for $\nu^{(h)}$.
\end{lem}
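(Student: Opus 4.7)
The plan is to reduce to the height function case (since a restriction of the dimer configuration to $U$ is determined by the restriction of the height function up to a global additive constant, which cancels in any Radon--Nikodym comparison of the restricted laws) and then combine the argument of \cref{continuity_bounded} with the winding-preservation trick used in the proof of \cref{thm:main_dimer}. By \cref{def:distance_domain}, $r$-closeness of $(D_1,x_1)$ and $(D_2,x_2)$ provides a homeomorphism $\phi : \overline{(D_2)}_d \to \overline{(D_1)}_d$ sending the marked prime end of $D_2$ to that of $D_1$, with $|\phi(a)-a| \le r$ and $|\phi^{-1}(b)-b| \le r$. This $\phi$ will play both the role of the diffeomorphism between the two domains (as in \cref{sec:RWD1,sec:RWD2,sec:erasure}) and the role of the identification of boundaries (as in \cref{continuity_bounded}).

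For the coupling, I would sample Wilson's algorithm in $D_2^\d$ with a sequence $v_1,\ldots,v_k$ of starting points supplied by \cref{thm:finitude} applied to $V$, so that outside an event of probability $\epsilon$ the branches in $U$ are determined by the first $k$ walks. Each walk $X^i$ in $D_2^\d$ can be used directly in $D_1^\d$ until the first time it either hits the current partial tree or exits $D_1 \cap D_2$. In the second case, since the exit point lies within distance $r$ of a point of $\partial D_2$ and $\phi$ is $r$-close to the identity, it also lies within distance $2r$ of $\partial D_1$, and by \cref{beurling} with high probability the walk exits $D_1^\d$ within a ball of radius $Cr$ around this point; the same Beurling argument bounds the probability of returning macroscopically before exiting $D_1$. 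An inductive application of this observation, exactly as in \cref{lem:subtree}, shows that for $r$ small enough the coupled trees agree on $U$ except on an event of probability at most $\epsilon^2$, so that $d_{TV}(\nu^{(h)}_1,\nu^{(h)}_2) \le \epsilon^2$ at the UST level, and the Markov-inequality step of \cref{continuity_bounded,coupling_to_radon} upgrades this to the Radon--Nikodym bound in the statement.

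To upgrade from UST to height function, I would control the winding of each branch as in the proof of \cref{thm:main_dimer}. By \cref{lem:harmonic_measure}, up to an arbitrarily small error I may assume every branch exits $D_2$ at macroscopic $d_{D_2}$-distance from the marked prime end, and by the coupling above its image in $D_1$ exits $D_1$ within $d_{D_1}$-distance $o(1)$ of $\phi$ of the $D_2$ exit point. Since $\phi$ preserves the marked prime end and is a homeomorphism of the prime-end closures, concatenating a coupled branch with the boundary arc back to the marked prime end and then with $\gamma_{\r}$ produces homotopic paths in the two domains (no extra twist is needed, in contrast to the generic case treated in \cref{thm:main_dimer}). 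Consequently the continuous winding functionals $W(\gamma_f,\gamma_f(0))$ in $D_1$ and $D_2$ differ by strictly less than $2\pi$ for $r$ small; combined with the modulo-$2\pi$ rigidity of \cref{rq:winding_modulo2pi}, they must actually agree, so the coupled height functions coincide on $U$ on the same high-probability event as the trees.

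The main obstacle is the winding step, because the prime-end distance of \cref{def:distance_domain} is strictly weaker than Hausdorff closeness of the boundary curves (see \cref{fig:slitted_domains}), and one cannot argue by a naive uniform bound on $|\phi(a)-a|$ alone near the marked prime end. The key input is that $\phi$ is a homeomorphism of the prime-end closures \emph{matching the marked prime end}, which rules out the hidden full rotation that \cref{thm:main_dimer} had to absorb by composing with an annular twist; together with \cref{lem:harmonic_measure} keeping the exit point away from the marked prime end, this is exactly what is needed for the winding rigidity to force exact equality. Once this is settled, the rest of the argument is a direct rerun of \cref{sec:upper_bound,sec:lower_bound} with the new $\phi$.
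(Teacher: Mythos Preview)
Your proposal is correct and follows essentially the same route as the paper: couple the walks identically until they approach $\partial(D_1\cap D_2)$, use \cref{beurling} and \cref{lem:harmonic_measure} to control the exit behaviour and keep the exit points away from the marked prime end, then invoke the homeomorphism from \cref{def:distance_domain} (which matches the marked prime ends) together with \cref{rq:winding_modulo2pi} to force the windings to agree, and finish by the Markov step of \cref{continuity_bounded}. One minor remark: your closing sentence about a ``direct rerun of \cref{sec:upper_bound,sec:lower_bound}'' is unnecessary here, since the total variation bound you already obtained suffices and the biased construction of \cref{sec:lower_bound} is not needed for this continuity statement.
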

\begin{proof}
	As in \cref{continuity_bounded}, it is enough to show that with high probability we can couple the laws of single branches in $D_1$ and $D_2$ so that they agree up to a neighbourhood of the boundary and create the same winding.
	
	Fix a face in $U^\d$ and let $X^1$, $X^2$, $\gamma_1$ and $\gamma_2$ denote the random walks and tree branches (going to infinity according to the convention from \cref{sec:winding}) in $D_1^\d$ and $D_2^\d$ respectively. Fix $\epsilon> 0$, $r$ to be chosen small enough later and let $f : D_1 \to D_2$ be an homomorphism given by \cref{def:distance_domain}. We couple $X^1$ and $X^2$ by setting $X^1= X^2$ up to $T := \inf \{ t : d(X^1[t], \partial (D_1 \cap D_2) \leq 2r \}$ and then letting them move independently. Reasoning as in \cref{continuity_bounded}, it is easy to see that if $r$ is small enough compared to $d (\partial U, \partial V)$ then $\gamma_1$ and $\gamma_2$ agree on $U$ with high probability. We also let $\tau_1$ and $\tau_2$ denote respectively the exit time of $D_1$ by $X^1$ and $D_2$ by $X^2$. Finally let $x_{\r}$ denote the marked prime end of $D_1$. 
	
	By \cref{beurling} and \cref{lem:harmonic_measure}, we know that there exists $\alpha > 0$ such that
	\[
	\P( X^1[\tau_1,T], X^1[\tau_1] ) \geq \sqrt{r}) \leq r^\alpha, \quad \P( d_{D_2}(X^2[T], X^2[\tau_2] ) \geq \sqrt{r}) \leq r^\alpha
	\]
and
\[
	\P( \abs{X^1[\tau_1] - x_{\r}} \leq 4\sqrt{r} ) \leq r^\alpha \quad \P( \abs{X^2[\tau_2] - f(x_{\r})} \leq 4\sqrt{r} ) \leq r^\alpha ,
\]
Putting the first two estimates together with the definition of $f$, we see that with high probability the distance between $X^2[0, \tau_2]$ and $f(X^1[\tau_1])$ is at most $3 \sqrt{r}$. Furthermore, we see from the last estimate that $X^2[\tau_2]$ and $f(X^1[\tau_1])$ cannot be on opposite sides of $f( x_{\r})$ so the extensions along $\partial D_1$ and $\partial D_2$ in the definition of $\gamma_1$ and $\gamma_2$ must also be close to each other. By continuity of the winding, $W( \gamma_1, \gamma_1(0))$ and $W( \gamma_2, \gamma_2(0))$ must therefore be close (still with high probability) and by \cref{rq:winding_modulo2pi} in that case they must be equal.
\end{proof}

For restriction of large domains, the law of the global shift in the height function depends on the whole domain so we only have a statement for $\nu^{(d)}$. Also in that case the boundary regularity is essentially irrelevant as long as there is some way to define a dimer configuration.
\begin{lem}
	For any bounded open set $U$ and $\epsilon > 0$, there exists $R > 0$ and $\delta_0 > 0$ such that for all $D_1, D_2$ which are either domains or the full plane, if $B(0, R) \subset D_1$ and $B(0, R) \subset D_2$ then 
	\[
	\nu^{(d)}_1 ( 1-\epsilon \leq \frac{d \nu^{(d)}_1}{d\nu^{(d)}_2} \leq 1+\epsilon) \geq 1-\epsilon, \qquad \nu^{(d)}_2(  1-\epsilon \leq \frac{d \nu^{(d)}_1}{d\nu^{(d)}_2} \leq 1+\epsilon ) \geq 1-\epsilon.
	\]
\end{lem}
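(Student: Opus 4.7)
The plan is to reduce to the UST continuity result \cref{continuity_unbounded} by exploiting the fact that, in contrast to the height function (whose normalisation involves the marked prime end and hence the whole domain), the dimer configuration on $U$ is a purely local function of the UST. More precisely, under the Temperley bijection of \cref{thm:bijection} (or its T-graph analogue), the matched half-edge at each primal or dual vertex is determined by which tree edge is outgoing at that vertex, so the restriction of the dimer configuration to $U$ is a deterministic function of the restriction of the UST to any bounded open set $U'$ with $\overline{U} \subset U'$.

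First I would fix such an auxiliary bounded open set $U'$ and denote by $\Phi$ the resulting deterministic map from UST configurations on $U'$ to dimer configurations on $U$, so that the dimer marginals on $U$ are exactly the pushforwards by $\Phi$ of the UST marginals on $U'$. Then I would apply \cref{continuity_unbounded} to $U'$ with tolerance $\eta := \epsilon^2/4$, producing $R>0$ and $\delta_0 > 0$ such that as soon as $B(0,R) \subset D_1 \cap D_2$, the UST marginals on $U'$ under $D_1^\d$ and $D_2^\d$ are within total variation distance $\eta$. Since total variation is non-increasing under pushforward, this immediately gives $d_{TV}(\nu_1^{(d)}, \nu_2^{(d)}) \leq \eta$ on $U$.

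Finally, I would convert this TV bound into the claimed Radon--Nikodym estimate in exactly the same way as in \cref{continuity_bounded}: writing the total variation as an $L^1$ norm of $d\nu_1^{(d)}/d\nu_2^{(d)} - 1$, integrating against both measures, and applying Markov's inequality, followed by a routine adjustment to pass between $d\nu_1^{(d)}/d\nu_2^{(d)}$ and its reciprocal (note in particular that no winding argument is needed here since we are looking at dimers rather than the height, unlike in \cref{continuity_dimers}).

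The main potential obstacle is that the lemma allows $D_1, D_2$ with no boundary regularity whatsoever, whereas \cref{thm:bijection} was stated under local connectedness. However, because we only need the bijection restricted to $U'$, which lies at macroscopic distance from $\partial D_1$ and $\partial D_2$ once $R$ is large enough, the local combinatorics is insensitive to the behaviour of the boundary: any reasonable definition of the dimer model must agree with the Temperley or T-graph bijection on this deep-interior region, so the local map $\Phi$ is well defined and the pushforward argument goes through regardless of the global geometry of $D_1$ and $D_2$.
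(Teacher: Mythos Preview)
Your reduction to \cref{continuity_unbounded} is the right overall strategy and matches the paper's, but the central locality claim is wrong. You assert that the dimer configuration on $U$ is a deterministic function of the (primal) UST on any fixed neighbourhood $U'\supset\overline U$ because ``the matched half-edge at each vertex is the outgoing tree edge''. This is fine at primal vertices, but at a dual vertex $v^*$ the matched half-edge is the outgoing edge of the \emph{dual} tree, and while the dual tree edge set near $v^*$ is determined by the local primal tree (as duals of primal non-tree edges), its \emph{orientation} toward the reference dual vertex is not. In the winding description of \cref{prop:height_winding}: if two faces $f,f'\in U$ have primal branches that exit $U'$ without having merged, one can modify the primal tree outside $U'$ so that one branch winds an extra time around $U'$ before reaching $\partial D$; this changes $h(f)-h(f')$ by $1$ and hence changes the dimer configuration in $U$, while the oriented primal tree on $U'$ is untouched. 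So the map $\Phi$ you postulate does not exist, and your remark that ``no winding argument is needed here'' is precisely where the proof breaks.

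The paper closes this gap with one extra step: choose $R$ large enough that, for the full-plane tree measure, all primal branches started in $U$ merge inside $B(0,R)$ with probability at least $1-\epsilon$. On that event the common tail of the $\gamma_f$ cancels in every height difference, so the dimer configuration on $U$ genuinely \emph{is} a function of the tree on $B(0,R)$; one then applies \cref{continuity_unbounded} with $U$ replaced by $B(0,R)$ and pushes forward the resulting total variation bound exactly as you describe. With this single correction your argument becomes the paper's proof.
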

\begin{proof}
Fix $\epsilon > 0$, it is well known that, for the full plane measure on trees, there exits $R >0$ such that, with probability at least $1-\epsilon$, all branches starting in $U$ merge within $B(0, R)$. In that case, the height difference between any two points in $U$ is given by the winding of a path staying inside $B(0, R)$ and in particular the dimer measure in $U$ is measurable with respect to the tree measure in $B(0,R)$. We conclude by applying \cref{continuity_unbounded} with $U$ replaced by $B(0,R)$.
\end{proof}

Another natural question is whether the global shift of the height function can be chosen (somewhat) independently of the dimer configuration, especially considering that the dimer configuration depends mostly on restriction of the tree in $U$ while the height depend on the number of turns of branches around $U$. The next two results give a positive answer to this question.
\begin{lem}\label{height_shift}
	Let $D$ be a simply connected bounded domain with a locally connected boundary and a marked boundary point and let $U$ be an open set such that $\overline U \subset D$. For all $\ell \in \Z$ and $\epsilon$ there exists $C> 0$ such that for all $\delta$ small enough,
	\[
	\nu^{(h)} \big( \frac{1}{C} \leq \frac{d \nu^{(h)}( . + \ell)}{d \nu^{(h)}} \leq C \big) \geq 1 - \epsilon,
	\]
	where by $\frac{d \nu^{(h)}( . + \ell)}{d \nu^{(h)}}$ we denote the Radon-Nikodym derivative of a height configuration shifted by $\ell$ with respect to the original measure.
\end{lem}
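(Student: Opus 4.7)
The plan is to adapt the coupling argument of \cref{prop:lower_bound_coupling} with $D_1 = D_2 = D$ and a \emph{twisted} intermediate diffeomorphism. First, choose open sets $U_1, U_2$ with $\overline U \subset U_1$, $\overline{U_1} \subset U_2$ and $\overline{U_2} \subset D$, and let $\phi : D \to D$ be a smooth self-diffeomorphism equal to the identity on $U_1 \cup (D \setminus U_2)$ and performing $\ell$ Dehn twists around $U$ in the topological annulus $U_2 \setminus \overline{U_1}$. Such a map is easily constructed: after reducing to concentric disks via a conformal identification of $U_2 \setminus \overline{U_1}$ with a round annulus, one can set $\phi(re^{i\theta}) = re^{i(\theta + 2\pi \ell g(r))}$ for a smooth $g$ equal to $0$ near $\partial U_1$ and to $1$ near $\partial U_2$. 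The key properties are that $\phi$ is bi-Lipschitz, fixes $\partial D$ pointwise (hence preserves the marked prime end), and satisfies $W(\phi(\gamma), \gamma(0)) = W(\gamma, \gamma(0)) + 2\pi \ell$ for every simple continuous curve $\gamma$ joining $\overline U$ to $\partial D$, because such a curve has algebraic intersection number exactly one with the core of $U_2 \setminus \overline{U_1}$.

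Next, run the construction of \cref{sec:RWD2,sec:RWD1,sec:erasure,sec:multiplepath} verbatim with this $\phi$. The estimates of \cref{lem:RN_X1,lem:erasure} only rely on $\phi$ being a bi-Lipschitz homeomorphism coinciding with the identity on a neighborhood of $U$, a condition still met. The output will be a coupling of the UST $T$ on $D$ with a biased tree $\tilde T$ of law $\tilde \mu$ such that, on an event of probability at least $1 - \epsilon$: (i) $T$ and $\tilde T$ have the same restriction to $U$; (ii) for each branch of $\tilde T$ starting in $U$, its winding differs from the corresponding branch of $T$ by $2\pi \ell + O(\epsilon)$; and (iii) $d\tilde\mu / d \mu \leq C$ for some $C$ depending only on $U, D, U_1, U_2, \ell$.

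By \cref{rq:winding_modulo2pi}, the winding discrepancy in (ii) is constrained to lie in $2\pi \Z$, so for $\epsilon$ small enough it must equal exactly $2\pi \ell$. Via \cref{prop:height_winding} this means that the height function of $\tilde T$ restricted to $U$ equals that of $T$ restricted to $U$ plus $\ell$. In particular the marginal of $\tilde \mu$ on heights over $U$ is $\epsilon$-close in total variation to $\nu^{(h)}(\cdot + \ell)$ while being absolutely continuous with respect to $\nu^{(h)}$ with density bounded by $C$; applying \cref{coupling_to_radon} with $\nu_1 = \nu^{(h)}(\cdot + \ell)$ and $\nu_2 = \nu^{(h)}$ yields the two-sided Radon-Nikodym bound of the statement.

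The main obstacle is verifying that the loop-erased walks $\tilde Y$ inherit the winding shift applied to the random walks $\tilde X$, since loop erasure can a priori alter winding substantially. This is precisely what the construction of \cref{sec:erasure} handles with our new $\phi$: by \cref{lem:erasure}, $\tilde Y$ lies within $O(\epsilon)$ of $\phi(Y)$ outside $U_1$, so the continuous winding functional is within $O(\epsilon)$ of the shifted value, and the quantization modulo $2\pi$ from \cref{rq:winding_modulo2pi} upgrades ``approximately $2\pi \ell$'' to exactly $2\pi \ell$.
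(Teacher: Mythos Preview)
Your proposal is correct and follows exactly the approach sketched in the paper's one-line proof: take $D_1=D_2=D$ in the argument for \cref{thm:main_dimer} and replace the winding-preserving $\phi$ by one that adds $2\pi\ell$ to the winding of every path from $U$ to $\partial D$, then invoke the quantisation from \cref{rq:winding_modulo2pi} and \cref{coupling_to_radon}. One small point of care: when you say ``run the construction verbatim'', note that the paper's construction in \cref{sec:RWD1} requires $\phi$ to be the identity on the set it calls $U_3$, so your twisted annulus $U_2\setminus \overline{U_1}$ must sit \emph{outside} that $U_3$ (equivalently, the paper's $U_1,U_2,U_3$ should be chosen nested inside your $U_1$); otherwise the twist would fall in the region where walk pieces are copied exactly from $\tilde X_2$ and would have no effect.
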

\begin{proof}
	This is essentially a corollary of the first part of the proof of \cref{thm:main_dimer} : in that proof we can take $D_1 = D_2$ but consider a map $\phi$ that adds exactly $2\pi \ell$ to the winding of any path from $U$ to $\partial D$ instead of a map that preserves the winding.
\end{proof}

\begin{corollary}\label{height_shift_conditionnal}
		Let $D$ be a simply connected bounded domain with a locally connected boundary and a marked boundary point and let $U$ be an open set such that $\overline U \subset D$. For all $\epsilon >0$, there exists $C > 0$ such that for all $\delta$ small enough, there exists a set $\mathcal{E}$ of dimer configurations of $U^{\d}$ such that
		\begin{itemize}
			\item $\nu^{(d)} (\mathcal{E}) \geq 1-\epsilon$,
			\item $\forall \omega \in \mathcal{E}, \nu^{(h)} \big( \forall \ell \in \Z, |\ell| \leq \frac{1}{\epsilon},  \frac{1}{C} \leq \frac{d \nu^{(h)}( . + \ell)}{d \nu^{(h)}} \leq C \big| \omega \big) \geq 1 - \epsilon$,
		\end{itemize}
	where in the second line, $\nu^{(h)} (\cdot | \omega)$ is the conditional law on the height function in $U$ given that the dimer configuration in $U$ is given by $\omega$.
\end{corollary}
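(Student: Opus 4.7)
The plan is to deduce \cref{height_shift_conditionnal} from \cref{height_shift} by a straightforward Fubini plus Markov argument, viewing the event that the Radon--Nikodym derivative lies in $[1/C, C]$ as a (purely height-function-level) event whose unconditional probability is controlled by the previous lemma.

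First, I would apply \cref{height_shift} separately to each $\ell \in \Z$ with $|\ell| \leq 1/\epsilon$ with some small precision $\eta$ to be fixed later. This yields, for each such $\ell$, a constant $C_\ell$ and an event
\[
A_\ell := \Bigl\{h \,:\, \frac{1}{C_\ell} \leq \frac{d \nu^{(h)}(\cdot + \ell)}{d \nu^{(h)}}(h) \leq C_\ell \Bigr\}
\]
with $\nu^{(h)}(A_\ell) \geq 1 - \eta$. Since there are only $2\lfloor 1/\epsilon \rfloor + 1$ integers $\ell$ to consider, I can set $C := \max_{|\ell| \leq 1/\epsilon} C_\ell$ (finite!) and $A := \bigcap_{|\ell| \leq 1/\epsilon} A_\ell$. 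A union bound then gives $\nu^{(h)}(A^c) \leq (2\lfloor 1/\epsilon\rfloor+1)\eta$, which I choose $\eta$ small enough to be $\leq \epsilon^2$.

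Second, I would use the tower property and Markov's inequality. Since $A$ is a measurable event on the height function, Fubini gives
\[
\E_\omega\bigl[\nu^{(h)}(A^c \mid \omega)\bigr] = \nu^{(h)}(A^c) \leq \epsilon^2,
\]
and Markov's inequality yields $\nu^{(d)}\bigl(\{\omega : \nu^{(h)}(A^c \mid \omega) \geq \epsilon\}\bigr) \leq \epsilon$. Setting $\mathcal{E}$ to be the complement of this bad set produces a set of dimer configurations with $\nu^{(d)}(\mathcal{E}) \geq 1-\epsilon$ on which $\nu^{(h)}(A \mid \omega) \geq 1-\epsilon$, which is exactly the required conditional statement since $A$ is the intersection of the target events over all $|\ell| \leq 1/\epsilon$.

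I do not expect any serious obstacle: the only conceptual observation is that the event $A$ is defined intrinsically on the height function (not on a conditional law), so $\nu^{(h)}(A \mid \omega)$ makes sense for almost every $\omega$ and Fubini applies directly. Equivalently, because the shift $h \mapsto h + \ell$ leaves the underlying dimer configuration $\omega$ unchanged, the unconditional Radon--Nikodym derivative $\frac{d\nu^{(h)}(\cdot+\ell)}{d\nu^{(h)}}$ coincides, at $h = (\omega,s)$, with the conditional Radon--Nikodym derivative obtained by fixing $\omega$, so the event on the left of the second bullet of the corollary is indeed $A$.
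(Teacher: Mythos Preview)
Your proof is correct and follows essentially the same route as the paper: a union bound over $|\ell|\le 1/\epsilon$ from \cref{height_shift}, then the tower property and Markov's inequality to pass to the conditional statement. Your closing remark that the shift $h\mapsto h+\ell$ fixes the dimer configuration (so the unconditional and conditional Radon--Nikodym derivatives agree) is a nice clarification that the paper leaves implicit.
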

\begin{proof}
	First by union bound, \cref{height_shift} implies that 
	\[\nu^{(h)} \big( \forall \ell \in \Z, |\ell| \leq \frac{1}{\sqrt{\epsilon}},  \frac{1}{C} \leq \frac{d \nu^{(h)}( . + \ell)}{d \nu^{(h)}} \leq C \big) \geq 1 - \sqrt{\epsilon}.
	\]
	  Let $G$ denote this event. We have $\E_{\nu^{(d)}} ( \nu^{h}( G^{c} | \omega)) \leq \sqrt{\epsilon}$ so by Markov inequality,  
	  \[\nu^{(d)} (  \nu^{(h)}( G^{c} | \omega) \geq \epsilon^{1/4}) \leq \epsilon^{1/4},
	  \] 
	  which concludes the proof taking $\mathcal{E} = \{ \omega : \nu^{(h)}( G| \omega) \geq 1- \epsilon^{1/4}\}$.
\end{proof}

\begin{remark}
	\cref{height_shift_conditionnal} essentially shows that it is possible to sample the dimer configuration in $U$ without revealing too much information over the global shift of the height in the sense that it cannot be concentrated in intervals smaller than $1/\epsilon$. A similar statement was obtained previously in \cite{Laslier2018} when $U$ is small enough compared to $D$. The method there follows more closely Wilson's algorithm in order to identify concrete random variables which are independent of the dimer configuration in $U$ but contribute to the global shift of height function through a notion of ``isolated scale''. This could certainly be adapted to the setting of this paper if one needed a more explicit construction of the set $\mathcal{E}$ or a more concrete link between $\epsilon$, the constant $C$ and the domain $D$.
\end{remark}

\subsection{Sampling an annulus}\label{sec:annulus}

In the previous sections, we always assumed $U$, $D_1$, $D_2$ to be simply connected. While relaxing this assumption for $D_1$ or $D_2$ introduces far to much complication for our purpose (see \cite{BLRtorus2} for information on dimers and Temperley's bijection in non simply-connected domains), it makes sense to consider a general open set $U$. For example, for possible applications to loop-ensembles it is very natural to ask whether one can reveal the existence of a loop in a specific annulus without affecting too much the distribution inside.

The first step is to prove that when running Wilson's algorithm, it does not ``cost'' to much to avoid a finite number of sets. More precisely, let $D$ be a simply connected domain and let $U_1', \ldots U_n'$ be disjoint open sets with $\overline{U_i'} \subset D$ and let $U_1, \ldots, U_n$ be simply connected open sets with $\overline{U_i} \subset U_i'$. Define $V = D \setminus \cup U'_i$. We will denote by $\cT^{ex}_V$  the union of all branches starting in $V$ (seen as a tree) and call its law $\nu^{ex}_V$ while we call $\cT_V$ and $\nu_V$ the restriction of the UST to $V$ and its law. 
\begin{lem}\label{annulus_coupling}
 For all $\epsilon > 0$, there exists $p > 0$ such that for all $\delta$ small enough there exists a law $\tilde \nu_V$ such that
	\[
		\tilde \nu^{ex}_V( \cT \cap (\cup U_i) = \emptyset ) \geq 1 - \epsilon , \quad  d_{TV} ( \tilde \nu_V , \nu_V ) \leq \epsilon, \quad \frac{d \tilde \nu^{ex}_V}{d \nu^{ex}_V} \leq \frac{1}{p}.
	\]
	Furthermore the total variation bound also holds for the restriction of the dimer height function and the dimer configuration.
\end{lem}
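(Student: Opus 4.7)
The plan follows the strategy of \cref{prop:lower_bound_coupling}, where the role of $D_2$ is played by $D$ and the role of $D_1$ by the (not simply connected) set $D' := D \setminus \cup_i U_i$. We construct $\tilde \nu_V^{ex}$ by running Wilson's algorithm from a sequence of starting points $v_1, v_2, \ldots \in V$, using for each branch a coupled pair of random walks $X^j$ in $D$ and $\tilde X^j$ in $D'$. Outside $\cup_i U_i'$ we set $\tilde X^j = X^j$. Each time $X^j$ enters some $U_i'$ at a vertex $a \in \partial U_i'$ and subsequently exits at a vertex $b \in \partial U_i'$, we replace the corresponding excursion of $X^j$ by an excursion of $\tilde X^j$ staying in $U_i' \setminus U_i$, distributed as a random walk in $U_i' \setminus U_i$ started at $a$ and conditioned to first exit $U_i'$ through $b$. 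Since $U_i' \setminus U_i$ is a connected annular neighbourhood of $\partial U_i'$, this conditioning is well-defined and, by the conditional uniform crossing \cref{crossing_conditional}, has uniformly positive probability.

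The remainder of the analysis follows \cref{sec:RWD2,sec:RWD1,sec:erasure} essentially verbatim. As in \cref{sec:RWD2}, we condition the loop-soups associated with the excursions in $\cup_i U_i'$ to contain only loops of diameter at most $r$, a uniformly positive event by \cref{loop_mass}. Applying \cref{cor:loop-reversal} with the alternating sequence $\cup_i U_i'$ and $D \setminus \overline{\cup_i U_i'}$, the mixed loop-erasures of $X^j$ and $\tilde X^j$ agree outside $\cup_i U_i'$ (and hence on $V$) as long as no large loop is erased, which is a high-probability event by \cref{quasi_loop}; this is the analogue of \cref{lem:erasure}. The density of $\tilde X^j$ with respect to the unbiased walk conditioned on its loop-erasure is uniformly bounded using \cref{crossing_conditional} together with the loop-soup comparison of \cref{lem:radonX2}, exactly as in \cref{lem:RN_X1}. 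Tightness of the number of excursions made by a single branch into $\cup_i U_i'$ follows from the Beurling estimate \cref{beurling}, and \cref{thm:finitude} reduces the coupling of all branches in $V$ to that of finitely many, uniformly in $\delta$. Combining, property (1) is immediate since by construction $\tilde X^j \subset D'$; property (2) follows because the loop-erasures agree on $V$ with high probability; property (3) from the product of finitely many uniformly bounded densities.

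For the dimer and height-function extensions, the key topological observation is that for any face $z$ lying in $V$, the difference between the original excursion of $X^j$ inside $U_i'$ and its biased replacement inside $U_i' \setminus U_i$ forms a closed loop entirely contained in $U_i'$. Since $z \notin U_i'$, this loop has winding zero around $z$, so the windings of $X^j$ and $\tilde X^j$, and consequently of their loop-erasures, around $z$ coincide; by \cref{prop:height_winding} the height function restricted to $V$ is preserved with high probability. The dimer configuration on $V$ is a local function of the tree restricted to $V$ and is preserved as well, yielding the desired total variation bound for $\nu^{(d)}$ and $\nu^{(h)}$. The main technical difficulty is handling a single branch that makes many excursions into the $U_i'$'s, which we resolve by applying the excursion-level coupling independently to each excursion and using tightness of the number of excursions to control the total accumulated density.
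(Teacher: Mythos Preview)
Your approach differs from the paper's in an important way, and the difference creates a genuine gap in the loop-erasure step. The paper does not work with $D' = D \setminus \cup_i U_i$ directly. Instead it first invokes an external input (Proposition~4.11 of \cite{BLR16}): a loop-erased walk is unlikely to come close to any fixed point, so one can choose small balls $B(u_i,r)$ that the tree $\cT_V^{ex}$ avoids with probability $\geq 1-\epsilon$. One then picks a homeomorphism $\phi : D \to D$ which is the identity on $V$ and satisfies $\phi(B(u_i,r)) = U_i$, and applies the machinery of \cref{sec:lower_bound} verbatim with this $\phi$. The replacement walk is thus forced to follow $\phi(\tilde X_2)$ through rectangles, exactly as in \cref{sec:RWD1}, and avoids $\cup U_i$ simply because the original walk avoided $\cup B(u_i,r)$.

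In your construction the replacement excursion in $U'_i \setminus U_i$ is merely a conditioned random walk from $a$ to $b$; it is not pinned to any reference curve. This breaks the analogue of \cref{lem:erasure}. Suppose the partial mixed erasure $\tilde Y^{i-1}_1$ already contains an earlier arc through $U'_i$ (which happens whenever $Y_2$ visits $U'_i$ more than once). Your replacement excursion, being free to roam through all of $U'_i \setminus U_i$, can hit that earlier arc. In the mixed-erasure update this cuts $\tilde Y^{i-1}_1$ at an early time and discards a macroscopic piece lying in $V$, so the two erasures no longer agree on $V$. The quasi-loop estimate \cref{quasi_loop} for $Y_2$ does not prevent this: it controls self-approaches of $Y_2$, not excursions of an unrelated walk. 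In the paper's proof this is exactly what the $\phi$-rectangle construction buys you: the replacement stays $O(r)$-close to $\phi(Y_2)$, and the absence of quasi-loops in $\phi(Y_2)$ then rules out hitting earlier arcs. Your winding argument for the height function is fine in spirit (and matches the paper's), but it presupposes the agreement on $V$ that has not been established.
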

\begin{proof}
	Fix $u_1 \in U_1, \ldots, u_n \in U_n$. By Proposition 4.11 from \cite{BLR16}, a single loop-erased random walk started close to $V$ is unlikely to come very close to any of the $u_i$. Combined with Schramm's finiteness theorem, we see that we can find $r > 0$ such that $\nu^{ex}_V \cT \cap (\cup B(u_i, r)) = \emptyset ) \geq 1 - \epsilon$.
	
	Take $\phi$ an homomorphism from $D$ to itself which restricts to the identity over $D$ and such that $\phi( B(u_i, r) ) = U_i$. We can construct $\tilde \nu^{ex}_V$ as in \cref{sec:lower_bound}, we just need to replace the last condition in \cref{sec:RWD1} (about the behaviour when the walk comes close to $\partial D_2$) by the condition that the walk does not enter $\cup B(u_i, r)$. The proof for the spanning tree then follows exactly as above. 
	
	In this setting, the construction automatically preserves the winding of the branches (and hence the height function and dimer configuration) because the winding of a portion of path in some $U_i$ only depends on the points where it enters and exits $U_i$.
\end{proof}

Now in each of the $U_i$, we can apply the previous results comparing the law in the remaining region after sampling $\cT^{ex}_V$ with the original domain to get the following.
\begin{prop}\label{prop:U_multiple}
	Let $\nu_{V,U_i}$ (resp. $\nu_{V,U_i}^{(h)}$, $\nu_{V,U_i}^{(d)}$) be the law of the restriction of the tree (resp. the height function and dimer configuration) to $V \cup \cup_i U_i$ and let $\nu_V$, $\nu_{U_i}$ be the corresponding marginal distributions. For all $\epsilon > 0$ there exists $C >0$ such that for all $\delta$ small enough
	\begin{gather*}
	\nu_{V, U_i} \big(\frac{1}{C} \leq \frac{d \nu_{V, U_i}}{d \nu_V \otimes \nu_{U_1} \otimes \ldots \otimes \nu_{U_n}} \leq C  \big) \geq 1-\epsilon, \\
		\nu_V \otimes \nu_{U_1} \otimes \ldots \otimes \nu_{U_n} \big(\frac{1}{C} \leq \frac{d \nu_{V, U_i}}{d (\nu_V \otimes \nu_{U_1} \otimes \ldots \otimes \nu_{U_n})} \leq C  \big) \geq 1-\epsilon.
	\end{gather*}
	The same holds for the dimer and height function measures.
\end{prop}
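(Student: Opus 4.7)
The plan is to establish the proposition by iterating \cref{annulus_coupling} to simultaneously decouple the restriction to $V$ from those to each $U_i$ and the restrictions to distinct $U_i$'s from each other, then using \cref{thm:main} (or equivalently \cref{prop:upper_bound,prop:lower_bound}) to identify each conditional marginal with $\nu_{U_i}$. First, apply \cref{annulus_coupling} once to obtain a biased sampling of $\cT^{ex}_V$ under which, on a good event $E_0$ of probability at least $1-\epsilon$, the branches starting in $V$ avoid $\cup_i U_i$, while $d_{TV}(\tilde \nu_V, \nu_V) \leq \epsilon$ and $\frac{d \tilde \nu_V^{ex}}{d \nu_V^{ex}} \leq 1/p$. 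This already gives the marginal on $V$ the right law up to bounded Radon-Nikodym derivative and small total variation error.

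On $E_0$, we continue Wilson's algorithm from the starting points inside the $U_i'$'s. Since a walk started in $U_i'$ may still enter another $U_j'$ before hitting $\cT^{ex}_V$ (creating dependence between the restrictions to $U_i$ and $U_j$), we apply \cref{annulus_coupling} a further $n$ times, once for each $i$, treating $\cup_{j \neq i} U_j$ as the sets to avoid when sampling the branches that start inside $U_i'$. Each such bias contributes a multiplicative factor of at most $1/p$ to the Radon-Nikodym derivative and an additive $\epsilon$ to the total variation, so after the $n+1$ biases, the combined derivative is still bounded by a constant $C = C(n,p)$ and the combined total variation is at most $(n+1)\epsilon$. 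On the intersection of all good events, the branches from distinct $U_i'$'s no longer interact, so conditionally on $\cT^{ex}_V$, the resulting restrictions to the $U_i$'s are genuinely independent.

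It remains to show that, for each $i$, the conditional marginal obtained on $U_i$ under this construction is close (in bounded Radon-Nikodym sense) to $\nu_{U_i}$. This is an application of \cref{thm:main} with $D_1 = D$ and $D_2$ taken to be $U_i'$ equipped with the partitioned-wired boundary induced by $\cT^{ex}_V \cap \partial U_i'$: since $\overline{U_i} \subset U_i' \subset D$, the theorem applies and produces the required comparison. Combining these three layers of bounded Radon-Nikodym factors together with the total variation bounds via \cref{coupling_to_radon} (applied to the joint law) yields both inequalities stated in the proposition. For the dimer and height function versions, each of the couplings used above preserves the winding of branches exactly as in the proof of \cref{thm:main_dimer}, so the same argument transfers verbatim using \cref{continuity_dimers} in place of \cref{continuity_bounded}.

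The main obstacle is Step 3: the "domain" $D_2$ we wish to plug into \cref{thm:main} is $U_i'$ with a partitioned-wired boundary determined by $\cT^{ex}_V$, rather than the simply connected smooth domain for which the theorem was originally stated. However, the coupling of \cref{sec:lower_bound} only depends on the behaviour of the random walks inside a small enlargement of $U_i$ (through the sets $U_1, U_2, U_3$ chosen there) and only uses the outer domain through \cref{beurling,lem:harmonic_measure}, both of which are insensitive to the precise topology of the boundary far from $U_i$. Hence the construction of \cref{sec:RWD1,sec:RWD2,sec:erasure,sec:multiplepath} carries over to each $U_i'$ with its modified wired boundary, provided (as is automatically the case here) that the modifications lie in the annulus $U_i' \setminus U_i$ at positive distance from $U_i$.
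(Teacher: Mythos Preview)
Your overall strategy is correct, but there is one unnecessary detour and one point that is not handled as cleanly as in the paper.

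\textbf{The extra $n$ applications of \cref{annulus_coupling} are superfluous.} By definition $\cT_V^{ex}$ is the union of \emph{all} branches starting in $V$, so once it is sampled every vertex of $V^\d$ already belongs to it. Since the $U_i'$ are disjoint and $V = D \setminus \cup U_i'$, any random walk started in $U_i$ must enter $V$ (and hence hit $\cT_V^{ex}$) before it can reach any other $U_j$. Thus, conditionally on $\cT_V^{ex}$, the restrictions of the tree to the various $U_i$ are \emph{automatically} independent; no further biasing is needed to separate them. The paper exploits this directly: after the single use of \cref{annulus_coupling} one simply samples, independently in each $U_i$, the tree in the (random, simply connected) component of $D \setminus \cT_V^{ex}$ containing $U_i$, using \cref{prop:lower_bound_coupling} to compare with $\nu_{U_i}$.

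\textbf{The uniformity issue.} Your description of the conditional domain as ``$U_i'$ with partitioned-wired boundary induced by $\cT_V^{ex} \cap \partial U_i'$'' is not quite right: the relevant domain is the connected component of $D \setminus \cT_V^{ex}$ containing $U_i$, which is a genuine simply connected domain with ordinary wired boundary (since $\cT_V^{ex}$ is connected). More importantly, to combine the conditional Radon--Nikodym bounds into a joint one you need the constant from \cref{prop:lower_bound_coupling} to be \emph{uniform over all realisations of $\cT_V^{ex}$}. The paper gets this from \cref{continuity_bounded}: once $\cT_V^{ex}$ is forced to stay at distance $r$ from $\cup U_i$, all possible conditional domains have boundary in the annulus $U_i' \setminus U_i$ and the uniformity in \cref{continuity_bounded} applies. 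Your final paragraph gestures at this by saying the coupling ``only depends on local behaviour'', but the clean way to make it rigorous is exactly via \cref{continuity_bounded} (and \cref{continuity_dimers} together with the tightness argument sketched in the paper for the dimer/height case, where an extra use of the finiteness theorem is needed to reduce to finitely many branches before invoking \cref{slit_continuity}).
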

In other words, the true joint law of the trace of the UST (or dimer/height function) is comparable with an independent product of the marginal distribution, in the Radon-Nikodym sense.
\begin{proof}
	We start by the equation in the first line of the display, where the lower bound is the non-trivial part. Also we focus first on the laws on trees. 
	
	Let $\tilde \nu$ be defined as follow. First we sample $\cT_V^{ex}$ as in \cref{annulus_coupling}, with the additional condition that $\cT_V^{ex}$ does not come within distance $r$ of $\cup U_i$ for some $r > 0$, which only amounts to changing the definition of the $U_i$. Then in each $U_i$, we sample independently the tree with boundary conditions given by $\cT_V^{ex}$, using in each case the law from \cref{prop:lower_bound_coupling} to compare it with the law of $\nu_{U_i}$.
	
	By construction $\tilde \nu$ satisfies $d_{TV}( \tilde \nu,  \nu_V \otimes \nu_1 \otimes \ldots \otimes \nu_{U_n} ) \leq \epsilon$. Furthermore by the continuity result \cref{continuity_bounded}, the constants appearing in each use of \cref{prop:lower_bound_coupling} can be chosen uniformly over all realizations of $\cT^{ex}_V$, therefore $\frac{d \tilde \nu}{d \nu_{V, U_i}} \leq 1/p$ for some $p$ since this is true for the marginal on $\cT_V^{ex}$ and then true for the conditional laws of all restriction to the $U_i$ given $\cT_{V}^{ex}$. Given this coupling, we conclude as in the proof of \cref{prop:lower_bound}. The proof of the second equation is similar.
	
	Finally for the laws on dimers or height function, the proof is very similar except that the weaker continuity result from \cref{continuity_dimers} has to be used instead of \cref{continuity_bounded}. More precisely fix two intermediate open set $V', V''$ such that $\overline V \subset V'$, $\overline{V'} \subset V''$ but $\overline V'' \cap \overline U_i = \emptyset$ for all $i$. By the finiteness theorem \cref{thm:finitude}, choose $k$ points in $V''$ such that, given the subtree $\cT_k$ emanating from these points, except on a event of probability $\epsilon$ all random walks and all branches started in $V$ will stay in $\partial V''$ and no branch started from $D \setminus V''$ can enter $V'$. It is well known (see for example Section 3.4 in \cite{Lawler2004}) that the laws of loop-erased random walk are tight (as $\delta \to 0$) when viewed as simple curves for the uniform topology on curve up to reparametrisation. Therefore, using the fact that only $k$ branches have been sampled, by \cref{continuity_dimers,slit_continuity}, for all $\epsilon$ we can find $C$ such that with high probability \cref{thm:main_dimer} can be used with the constant $C$ to compare the $\nu_{U_i}( \cdot | \cT_k)$ to the $\nu_{u_i}$. This concludes because, by construction, the restrictions to the $U_i$ and $\cT_V$ are almost independent given $\cT_k$.
\end{proof}



\subsection{Continuum limit}

In this section, we will show the analogue of \cref{thm:main} for the continuum spanning tree.

\begin{prop}
	Let $D_1$, $D_2$ be two simply connected domains and let $U$ be an open set such that $\overline{U} \subset D_1 \cap D_2$. Let $\nu_1^c$ and $\nu_2^c$ be the laws of the restriction to $U$ of the continuum spanning tree in $D_1$ and $D_2$. These laws are absolutely continuous with respect to each other.
\end{prop}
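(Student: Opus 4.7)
The plan is to lift the discrete result \cref{thm:main} to the continuum by passing to the scaling limit. By the convergence of the two-dimensional UST to its continuum counterpart \cite{Lawler2004, Yadin2011}, the discrete marginals $\nu_i^\d$ converge weakly to $\nu_i^c$ as $\delta \to 0$, viewed as probability measures on the (compact Polish) space of closed subsets of $\overline{U}$ endowed with the Hausdorff topology.

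The core step is to recast the discrete bound in a form that is stable under weak limits. Fix $\epsilon > 0$ and let $C = C(\epsilon)$ be the constant from \cref{thm:main}. The bound $\nu_1^\d(d\nu_1^\d/d\nu_2^\d \leq C) \geq 1 - \epsilon$ implies the signed-measure inequality
\[
(\nu_1^\d - C\nu_2^\d)^+(\Omega) \leq \epsilon,
\]
and symmetrically $(\nu_2^\d - C\nu_1^\d)^+(\Omega) \leq \epsilon$. I would then write $\nu_1^\d - C\nu_2^\d = \phi_\d - \psi_\d$ with $\phi_\d$ and $\psi_\d$ the positive and negative parts. Since $\phi_\d \leq \nu_1^\d$ and $\psi_\d \leq C\nu_2^\d$ as positive measures, both families are tight. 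Passing to a common subsequence along which $\phi_\d \to \phi$ and $\psi_\d \to \psi$ weakly, we obtain $\nu_1^c - C\nu_2^c = \phi - \psi$ with $\phi, \psi \geq 0$ and $\phi(\Omega) \leq \liminf \phi_\d(\Omega) \leq \epsilon$. Since any decomposition of a signed measure as a difference of positive measures dominates its Hahn decomposition, $(\nu_1^c - C\nu_2^c)^+(\Omega) \leq \phi(\Omega) \leq \epsilon$, that is, for every Borel set $A$,
\[
\nu_1^c(A) \leq C \nu_2^c(A) + \epsilon.
\]

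The conclusion is then immediate: if $\nu_2^c(A) = 0$ for some Borel $A$, then $\nu_1^c(A) \leq \epsilon$ for every $\epsilon > 0$ (with the corresponding $C(\epsilon)$), hence $\nu_1^c(A) = 0$. The symmetric bound $(\nu_2^c - C\nu_1^c)^+(\Omega) \leq \epsilon$ gives the reverse implication, establishing mutual absolute continuity.

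The main obstacle is the first step, namely specifying the topology and the scaling-limit result precisely enough: one must check that the restriction to $\overline U$ of the continuum spanning tree is a well-defined measurable object on a reasonable space and that the discrete measures $\nu_i^\d$ converge weakly to their continuum counterparts in this space. Once this framework is in place the remainder is a soft extraction argument and an application of the Hahn decomposition. An alternative route would be to use \cref{prop:upper_bound_event} on continuity sets and then argue via a $\pi$--$\lambda$ style extension to all Borel sets, but the signed-measure reformulation above avoids any subtlety about whether continuity sets generate the Borel $\sigma$-algebra.
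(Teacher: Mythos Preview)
Your proposal is correct and follows essentially the same route as the paper: reformulate \cref{thm:main} as a measure inequality stable under weak limits, pass to subsequential limits via tightness, and deduce $\nu_1^c(A)\leq C\nu_2^c(A)+\epsilon$ for all Borel $A$. The only cosmetic difference is bookkeeping: the paper restricts both $\nu_i^\d$ to the good set $\{1/C\leq d\nu_1^\d/d\nu_2^\d\leq C\}$ and passes these restricted measures to the limit, whereas you take the Hahn decomposition of $\nu_1^\d-C\nu_2^\d$ and pass its parts; both devices serve exactly the same purpose and yield the same final inequality. The paper works in Schramm's topology rather than the Hausdorff topology you mention, but as you correctly note this choice only enters through the weak-convergence input and does not affect the soft extraction argument.
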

Before giving the proof which will follow from \cref{thm:main}, let us note that this results should be fairly easy to prove using the theory of imaginary geometry. Indeed the restriction of the trees to $U$ are measurable with respect to the restriction of the associated GFFs to $U$ but since the GFF is a Gaussian process, it should be possible to write the Radon-Nicodym
derivative explicitly. However we still think that the following proof is useful because it does not rely on any knowledge of SLE or the GFF. For example it sounds much easier to adapt this proof to a non simply-connected setting than to develop imaginary geometry on Riemann surfaces.


\begin{proof}
	In this proof, consider $G^\d = \delta \Z^2$ with unit weight. By the celebrated result of \cite{Lawler2004}, the laws $\mu_i^\d$ of the discrete uniform spanning tree in $\D_i^\d$ converge in law to continuum limits $\mu_i^c$ say for Schramm's topology which defines a Polish space.
	
	Fix $\epsilon > 0$ and write 
	\[
	\bar\nu_i^\d = \nu_i^\d 1_{\frac{1}{C} \leq \frac{d \nu_1^\d}{ d \nu_2^\d} \leq C},
	\]
	 where $C$ is obtained from \cref{thm:main} with $i = 1,2$. Note that, since $\bar \nu_i^\d \leq \nu_i^\d$ and the sequence $\nu_i^\d$ converge in law, the sequence $\bar \nu_i^\d$ must be tight so let $\bar \nu_i^c$ be a sub-sequential limits. By the choice of $C$ in \cref{thm:main}, both $\bar \nu_1^c$ and $\bar \nu_2^c$ have total mass at least $1-\epsilon$.
	
	By definition, for any positive continuous bounded function $f$,  $\bar \nu_1^\d(f) \leq \nu_1^\d(f)$ so taking the limit along the appropriate subsequence $\bar \nu_1 (f)^c \leq \nu_1^c(f)$ and therefore $\nu_1^c - \bar \nu_1^c$ is a positive bounded linear form on continuous functions. By the Riesz representation theorem, it extends to a unique positive measure but since $\nu_1^c - \bar \nu_1^c$ is already a measure we must have $\bar \nu_1 (E)^c \leq \nu_1^c(E)$ for any measurable event $E$. Of course the analogous statement holds for $\bar \nu^c_2$ and $\nu^c_2$. Now note that the same proof and the definition of the $\bar \nu_i^c$ also shows that
	\[
	\frac{1}{C} \bar \nu_2^c(E) \leq \bar\nu^c_1(E) \leq C \bar \nu_2^c( E).
	\]
	Combining the above with the bound on the total mass of the $\bar\nu_i$, we obtain that
	\[
		\frac{1}{C} ( \nu_2^c(E) - \epsilon) \leq \nu^c_1(E) \leq C \ \nu_2^c( E) + \epsilon
	\]
	for any event $E$ and this concludes the proof since $\epsilon$ was arbitrary.
\end{proof}

\printbibliography

\end{document}